\DeclareSymbolFont{CM}{OMX}{cmex}{m}{n}
\DeclareMathSymbol{\sumop}{\mathop}{CM}{"50}
\renewcommand{\sum}{\sumop}
\newtheorem{thm}{\sc Theorem}[section]
\newtheorem{cor}[thm]{\sc Corollary}
\newtheorem{exmp}[thm]{\sc Example}
\newtheorem{lem}[thm]{\sc Lemma}
\newtheorem{prop}[thm]{\sc Proposition}
\theoremstyle{definition}
\newtheorem{defn}[thm]{\sc Definition}
\theoremstyle{remark}
\newtheorem{rem}[thm]{\it Remark}
\newtheorem{ques}[thm]{\it Question}
\newtheorem{conj}[thm]{\sc Conjecture}
\numberwithin{equation}{section}
\begin{document}

\title[Existence and non-uniqueness of cone spherical metrics]{Existence and non-uniqueness of cone spherical metrics with prescribed singularities on a compact Riemann surface with positive genus}

\author{Yu Feng }
\address{Chern Institute of Mathematics and LPMC, Nankai University \newline \indent Tianjin 300071, China,}
\email{yuf@nankai.edu.cn}
\thanks{}

\author{Jijian Song}
\address{School of Mathematics and KL-AAGDM, Tianjin University\newline \indent Tianjin, 300350, China}
\email{smath@tju.edu.cn}
\thanks{}

\author{Bin Xu}
\address{CAS Wu Wen-Tsun Key Laboratory of Mathematics and  School of Mathematical \newline \indent Sciences, University of Science and Technology of China, Hefei 230026 China}
\email{bxu@ustc.edu.cn}
\subjclass[2020]{Primary 30F45; Secondary 14H60}

\keywords{cone spherical metric, ramification divisor map, polystable extension}


\vspace{-5mm}

\begin{abstract}
Cone spherical metrics, defined on compact Riemann surfaces, are conformal metrics with constant curvature one and finitely many cone singularities. Such a metric is termed \textit{reducible} if a developing map of the metric has monodromy in ${\rm U(1)}$, and \textit{irreducible} otherwise. Utilizing the polystable extensions of two line bundles on a compact Riemann surface $X$ with genus $g_X>0$, we establish the following three primary results concerning these metrics with cone angles in $2\pi{\mathbb Z}_{>1}$:

\begin{itemize}
\item[(1)]  Given an effective divisor $D$ with an odd degree surpassing $2g_X$ on $X$, we find the existence of an effective divisor $D'$ in the complete linear system $|D|$ that can be represented by at least two distinct irreducible cone spherical metrics on $X$.

\item[(2)] For a generic effective divisor $D$ with an even degree and $\deg D\geq 6g_X-2$ on $X$, we can identify an arcwise connected Borel subset in $|D|$ that demonstrates a Hausdorff dimension of no less than $\big(\deg D-4g_{X}+2\big)$. Within this subset, each divisor $D'$ can be distinctly represented by a family of reducible metrics, defined by a single real parameter.

\item[(3)] For an effective divisor $D$ with $\deg D=2$ on an elliptic curve, we can identify a Borel subset
in $|D|$ that is arcwise connected and exhibits a Hausdorff dimension greater than one. Within this subset, each divisor $D'$ can be distinctly represented by a family of reducible metrics, defined by a single real parameter.

\end{itemize}
\end{abstract}
\maketitle

\section{Introduction}
 We investigate the existence and uniqueness problem of cone spherical metrics representing effective divisors on a compact Riemann surface $X$ of genus $g_X>0$. This project builds upon the work of \cite{LSX:2021}, wherein the authors establish a correspondence between cone spherical metrics representing effective divisors and polystable extensions of two line bundles 
 on compact Riemann surfaces of genus $\geq 0$. This section will involve a review of several concepts and research backgrounds related to cone spherical metrics, along with an elucidation of the main results of this article.

\subsection{Background of cone spherical metrics}
Let $X$ be a compact Riemann surface of genus $g_X$, and $D = \sum_{j=1}^n \, \beta_j \, p_j$ an $\mathbb{R}$-divisor on $X$ such that $ -1<\beta_j \neq 0$. We call ${\rm d}s^2$ a {\it cone spherical metric representing} $D$ if

\begin{itemize}
\item ${\rm d}s^2$  is a conformal metric of Gaussian curvature $+1$ on $X \setminus {\rm supp} \, D := X \setminus \{p_1, \ldots, p_n\}$.

\item  ${\rm d}s^2$ has a cone singularity at each $p_j\in {\rm supp}\, D$ with cone angle $2 \pi( 1 + \beta_j)$. That is, there exists a complex coordinate chart $(U, \, z)$ centered at $p_j$ such that ${\rm d}s^2$ has form $e^{2 \varphi } \, |{\rm d}z|^2$  on $U\setminus \{ p_j \}$, where the real valued function $\big(\varphi - \beta_j \, \ln \, |z|\big)$ extends to a continuous function on $U$.
\end{itemize}
By the Gauss-Bonnet formula, if ${\rm d}s^2$ is a cone spherical metric representing $D$ on $X$, then there holds $2-2g_{X}+\deg \, D >0$,
where $\deg \, D = \sum_{j=1}^n \, \beta_j$. 
The study of the existence and uniqueness of cone spherical metrics has long been a pivotal, yet unresolved, question in classical mathematics, attracting considerable scholarly attention.
Troyanov \cite[\S3, Theorem 4]{Troyanov:1991} posited that such a metric exists under the condition:
\[
0<2-2g_{X}+\deg \, D <{\rm min}\left\{2,2\ {\rm min}\ \beta_j\right\}.
\]
Luo--Tian \cite{LT:1992} demonstrated that the aforementioned condition is also necessary and that the cone spherical metric is unique, provided that there are at least $n\geq 3$ cone angles, each falling within $(0,2\pi)$ and positioned at any $n$ distinct points on the Riemann sphere ${\mathbb P}^1$.
Subsequently, the synergistic interplay of Troyanov's finding with that of Luo-Tian culminated in a landmark theorem. It articulates the necessary and sufficient criteria for the existence and uniqueness of spherical metrics on ${\mathbb P}^1$ with $n\geq 3$ prescribed cone angles $\displaystyle{\{2\pi(1+\beta_j)\}_{j=1}^n\subset (0,\, 2\pi)}$, which are assigned to 
any $n$ distinct cone singularities on $\mathbb{P}^1$, respectively. This criteria is also called the stability condition with form
\[\sum_{\substack{j=1 \\ j\not=i}}^n\, \beta_j<\beta_i\quad \text{for all}\quad 1\leq i\leq n.\]
Notably, the criteria by Troyanov and Luo-Tian is not at all sensitive to the configuration of positions of cone 
singularities on the underlying Riemann surface.  De Borbon-Panov \cite{dBP:2022} provided a reproof of this theorem by utilizing the Kobayashi-Hitchin correspondence for rank two parabolic stable bundles on the Riemann sphere. Independently, the authors of \cite[Theorem 2.1.]{LSX:2021} observed that a developing map of an irreducible spherical metric could be thought of as an embedded line sub-bundle in some rank two stable vector bundle.   Other relevant studies include those by  Bartolucci-De Marchis--Malchiodi \cite{BdeMM:2011}, Chai-Lin-Wang \cite{Chai-L-W:2015}, Chen-Lin \cite{Chen-L:2015},  Chen-Lin-Wang \cite{Chen-L-W:2004}, Chen-Kuo-Lin \cite{Chen-K-L:2017}, Eremenko \cite{Eremenko:2004,  Eremenko:2020coaxial, Eremenko:2020four}, Eremenko-Gabrielov \cite{Eremenko-G:2015}, Eremenko-Gabrielov-Tasarov \cite{Eremenko-G-T:2014, Eremenko-G-T:2016four, Eremenko-G-T:2016three}, Lin-Wang \cite{Lin-W:2010, Lin-W:2017}, Mazzeo-Zhu \cite{MZ:2020, MZ:2022}, Mondello--Panov \cite{MP:2015, MP:2019}, Troyanov \cite{Troyanov:1989}, Umehara-Yamada \cite{Umehara-Y:2000}, Xu-Zhu \cite{XZ:2021} and Zhu \cite{Zhu:2020}.

In this manuscript, our primary emphasis is on exploring the existence and {\it non-uniqueness} of cone spherical metrics representing specific effective divisors on compact Riemann surfaces. We systematically divide these metrics into two distinct categories: reducible and irreducible, each warranting a separate, detailed examination. In the following sections, we will revisit and elucidate the pertinent concepts associated with these classifications.

Let ${\rm d}s^2$ be a cone spherical metric representing $D$ on $X$. This metric gives rise to a multivalued locally univalent holomorphic map $f\colon X \setminus {\rm supp} \, D \rightarrow \mathbb{P}^1$, referred to as the {\it developing map} of ${\rm d}s^2$.  The map $f$ satisfies the following properties (\cite[Lemma 2.1 and Lemma 3.1]{CWWX:2015}):
\begin{itemize}
\item [(1)] $\mathrm{d} s^2=f^{*}\mathrm{d} s_{0}^2$, where\ $\mathrm{d} s_{0}^2=\frac{4|{\rm d}z|^2}{(1+|z|^2)^2}$\ is the Fubini-Study metric on $\mathbb{P}^1$;

\item [(2)] The monodromy of $f$ lies in
\[ {\rm PSU(2)}:=\left\{ z \mapsto \frac{az+b}{-\overline{b}z + \overline{a}} \; \Big| \; |a|^2 + |b|^2 = 1 \right\} \subset {\rm PSL}(2,\,\mathbb{C});
 \]
\item [(3)] The principal singular term of the Schwarzian derivative $\{f,z\}$ of $f$ assumes the form $\frac{1-\beta_j^2}{2z^2}$
in some complex coordinate chart $(U,\, z)$ centered at each $p_{j}$.
\end{itemize}
Moreover, any two developing maps of $\mathrm{d} s^2$\ are interrelated via a M{\" o}bius transformation in ${\rm PSU}(2)$.
A cone spherical metric  is called {\it reducible} if and only if some developing map of the metric has monodromy in ${\rm U}(1) = \left\{ z \mapsto e^{\sqrt{-1}\, t}z \mid t \in [0, \, 2\pi) \right\}$. Otherwise, it is called {\it irreducible}.  We call a reducible metric {\it (non)trivial} if the monodromy of a developing map of the metric is (non)trivial.

In \cite{LSX:2021}, the authors established an algebraic framework for cone spherical metrics representing effective divisors. This approach, intricately connected to line sub-bundles of rank two polystable vector bundles, is expounded in \cite[Theorem 3.1.]{LSX:2021} and is fundamentally reliant on the concept of developing maps of cone spherical metrics. 
Considering that the foundation of our current study is this established framework, we will dedicate the remaining paragraphs of this subsection to a comprehensive overview of its essential principles.

Let ${\rm d}s^2$ be a cone spherical metric representing an effective divisor $D$. Consider a coordinate covering $\{U_{\alpha},z_{\alpha}\}$ of $X$, where each $U_{\alpha}$ is sufficiently small and contains at most one point in $\textup{Supp}\  D$. On each $U_{\alpha}$, there exists a holomorphic map $f_{\alpha}\colon U_{\alpha}\rightarrow \mathbb{P}^1$ such that ${\rm d}s^2|_{U_{\alpha}}=f_{\alpha}^{*}\mathrm{d} s_{0}^2$. On $U_{\alpha}\cap U_{\beta}$, there holds $f_{\alpha}=f_{\alpha\beta}\circ f_{\beta}$, where $f_{\alpha\beta}\in \textup{PSU(2)}$. Consequently,
$\{U_{\alpha},f_{\alpha}\}$ defines a branched projective covering of $X$ with ramification divisor $D$. The transition functions $\{f_{\alpha\beta}\}$ define an indigenous bundle $P$, while $\{f_{\alpha}\}$ provide a holomorphic section $s$ of $P$ that is 
{\it non-locally flat}, i.e., $f_{\alpha}$ is non-constant on each $U_{\alpha}$ (\cite[Definition 3.1.]{LSX:2021}).

In contrast, given a projective unitary flat $\mathbb{P}^1$ bundle $P$ coupled with a non-locally flat section $s$, denoted as the pair $(P,s)$, it is possible to construct a cone spherical metric. A projective unitary flat $\mathbb{P}^1$ bundle is characterized by having a bundle atlas whose transition functions are constants and elements of ${\rm PSU(2)}$. Suppose $\{U_{\alpha},s_{\alpha}\}$ represents the local expression of the section $s$ to such a bundle atlas. Then, on $U_{\alpha}\cap U_{\beta}$, there holds $s_{\alpha}=f_{\alpha\beta}\cdot s_{\beta}$, where the transition functions $f_{\alpha\beta}\in {\rm PSU(2)}$. Let ${\mathrm d}s^{2}_{\alpha}$ denote the singular metric defined by $s^{*}_{\alpha}(\mathrm{d} s_{0}^2)$ on $U_{\alpha}$. Since the transition functions $f_{\alpha\beta}$ are in ${\rm PSU(2)}$, this construction ensures the definition of a globally consistent cone spherical metric, symbolized by ${\mathrm d}s^{2}$,  which represents an effective divisor on $X$, i.e., the ramification divisor of $s$ concerning the unitary flat bundle $P$.

In summary, the authors \cite[Theorem 3.1.]{LSX:2021}  established  the existence of a surjective map on compact Riemann surfaces. This map originates from the set of projective unitary flat $\mathbb{P}^1$ bundles, denoted as $P$, each coupled with a non-locally flat section $s$, and leads to the set of cone spherical metrics representing effective divisors. It is noteworthy that on compact Riemann surfaces, the pair $(P, s)$ corresponds to an embedding $L\stackrel{i} \hookrightarrow E$ of the line bundle $L$ into the rank two polystable vector bundle $E$, with $P = \mathbb{P}(E)$, where $L$ is the pre-image of $s$. In this context, $E$ signifies a polystable extension of $E/L$ by $L$. Let $\mathcal{PE}(X)$ denote the space of polystable extensions of two line bundles on $X$. The space of cone spherical metrics representing effective divisors on $X$ is denoted by $\mathcal{M}(X, \mathbb{Z})$. Consequently, the aforementioned surjective map induces another surjective map, as shown in the following equation:
\begin{equation}
\label{equ:surmap}
\sigma \colon \mathcal{PE}(X) \to \mathcal{M}(X, \mathbb{Z}).
\end{equation}
We will frequently utilize it throughout the proof of our main results.

\subsection{Main results}
Our primary results are segmented into two parts. The first entails a theorem on the existence of non-unique irreducible metrics representing effective divisors of odd degrees. The theorem is stated as follows.

\begin{thm}
\label{thm:irr}
If $D$ is an effective divisor of odd degree greater than $2g_X$ on a compact Riemann surface $X$ of genus $g_X>0$ . Then there exists another effective divisor $D'$ in $|D|$ and two distinct irreducible cone spherical metrics on $X$ which represent $D'$.
\end{thm}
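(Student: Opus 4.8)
The plan is to route everything through the surjection $\sigma\colon\mathcal{PE}(X)\to\mathcal{M}(X,\mathbb{Z})$ of \eqref{equ:surmap}, producing two distinct elements of $\mathcal{PE}(X)$ whose images are irreducible metrics with a common ramification divisor $D'\in|D|$. Write $d=\deg D$, and fix line bundles $L,M$ with $M\cong L\otimes\mathcal{O}(D)\otimes K_X^{-1}$, so that for any extension $0\to L\to E\to M\to 0$ the second fundamental form of the sub-line-bundle $L\subset E$ is an $\mathcal{O}_X$-linear map $L\to M\otimes K_X$, i.e.\ a section of $L^{-1}\otimes M\otimes K_X\cong\mathcal{O}(D)$ whose zero divisor is exactly the ramification divisor of the \cite{LSX:2021} section. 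The first thing to record is that the degree cooperates automatically: since $\deg M-\deg L=d-(2g_X-2)$, we get $\deg E=2\deg L+\bigl(d-2g_X+2\bigr)\equiv d\pmod 2$, so $d$ odd forces $\deg E$ odd. A rank-two bundle of odd degree is semistable if and only if it is stable, whence every polystable $E$ arising in this construction is in fact stable and the corresponding metric is automatically irreducible. This is precisely why the odd-degree hypothesis yields irreducible metrics and why the reducible (decomposable) case never intervenes.

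Next I would pin down the parameter space. Twisting $E$ by a degree-zero line bundle alters neither $\mathbb{P}(E)$ nor the section nor the metric, so after quotienting by this action the choice of $L$ becomes irrelevant and the family of extensions is parametrised by $\mathbb{P}\,\mathrm{Ext}^1(M,L)=\mathbb{P}\,H^1\bigl(X,K_X(-D)\bigr)$. Because $d>2g_X$ we have $H^1(\mathcal{O}(D))=0$, so by Serre duality and Riemann--Roch this projective space has dimension $h^0(\mathcal{O}(D))-1=d-g_X$, which is exactly $\dim|D|$; note that source and target are the dual projective spaces $\mathbb{P}\,H^0(\mathcal{O}(D))^{*}$ and $\mathbb{P}\,H^0(\mathcal{O}(D))$. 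The bound $d>2g_X$ also guarantees that a generic class gives a \emph{stable} $E$ (the quotient degree exceeds the sub-bundle degree by more than two), so on a dense open $U\subset\mathbb{P}\,H^1(K_X(-D))$ the Narasimhan--Seshadri theorem equips $\mathbb{P}(E)$ with its canonical flat $\mathrm{PSU}(2)$ structure and $\sigma$ delivers a genuine irreducible metric. Taking zero divisors of the second fundamental forms defines the ramification divisor map
\[
r\colon U\longrightarrow |D|,\qquad [e]\longmapsto \operatorname{div}\bigl(\psi_e\bigr),
\]
between an open subset of $\mathbb{P}^{\,d-g_X}$ and $\mathbb{P}^{\,d-g_X}$. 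Since a cone spherical metric recovers its developing map up to $\mathrm{PSU}(2)$, hence recovers the pair $(\mathbb{P}(E),s)$ and with it the class $[e]$, the assignment $[e]\mapsto(\text{metric})$ is injective; the whole theorem is therefore reduced to the single assertion that \emph{$r$ is not injective}, for any two classes in a common fibre then give two distinct irreducible metrics representing the same $D'\in|D|$.

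The remaining, and main, difficulty is to establish non-injectivity of $r$. The structural input is that $r$ is a proper, surjective, real-analytic map between equidimensional spaces, so it has a topological degree and generically finite fibres; what must be produced is one value with at least two preimages. The key point is that $r$ is genuinely \emph{non-holomorphic}: the section $\psi_e$ is the $(1,0)$-part of the Narasimhan--Seshadri connection, i.e.\ the Hermitian adjoint of a Dolbeault representative of $e$ with respect to the Hermitian--Einstein metric, so $e\mapsto\psi_e$ depends transcendentally and only real-analytically on $e$. Were $r$ the naive (conjugate-)linear identification of the two dual projective spaces it would be injective; the HE-metric correction is exactly what can introduce orientation-reversing sheets, which is impossible for a holomorphic map. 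I would make this precise by comparing the sign of the Jacobian of $r$ at two suitably chosen classes—contradicting the hypothesis that $r$ is an injection (hence, by invariance of domain, a homeomorphism $\mathbb{P}^{\,d-g_X}\to\mathbb{P}^{\,d-g_X}$ of constant local orientation)—or, alternatively, by exhibiting an explicit fibre of cardinality at least two built from a single stable $E$ carrying two sub-line-bundles $L_1,L_2$ with $L_1^{2}\cong L_2^{2}$ whose second fundamental forms are arranged to share a zero divisor. Controlling the transcendental dependence of $\operatorname{div}(\psi_e)$ on $e$ tightly enough to force such a coincidence is the crux of the argument, and it is where a careful analysis of the Narasimhan--Seshadri correspondence will be indispensable.
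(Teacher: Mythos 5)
Your setup---parametrising the candidate metrics by $\mathbb{P}\,\mathrm{Ext}^1(M,L)\cong\mathbb{P}\,H^0\bigl(X,\mathcal{O}(D)\bigr)^{*}$, observing that odd degree forces stability and hence irreducibility, checking equidimensionality with $|D|$, and reducing the theorem via injectivity of the extension-to-metric assignment (the paper's Lemma \ref{lem:inj}) to the non-injectivity of the ramification divisor map $r$---matches the paper's Section 3 step for step. But the one step you yourself flag as ``the crux'' is exactly the step you do not supply, and the routes you sketch for it (comparing Jacobian signs at two points, or engineering an explicit two-element fibre from a stable $E$ with two sub-line-bundles) are neither carried out nor the ones that work. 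Your invocation of invariance of domain is also aimed at the wrong target: you argue that injectivity would make $r$ a homeomorphism $\mathbb{P}^{\,d-g_X}\to\mathbb{P}^{\,d-g_X}$ ``of constant local orientation'' and hope to contradict this with an orientation computation, but a homeomorphism between two copies of $\mathbb{P}^{\,d-g_X}$ is not in itself contradictory, and no orientation computation is ever performed.

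The missing idea is that the domain of $r$ is a \emph{proper} Zariski-open subset of $\mathbb{P}\,\mathrm{Ext}^1(M,\mathcal{O}_X)$, hence non-compact. The paper establishes this with the Lange--Narasimhan criterion: under the embedding $\varphi_{K_X\otimes M}\colon X\hookrightarrow \mathbb{P}\,\mathrm{Ext}^1(M,\mathcal{O}_X)$, a nontrivial extension is unstable if and only if it lies on the secant variety $\operatorname{Sec}_{(d-1)/2}\bigl(\varphi(X)\bigr)$, which has dimension $(d-1)/2>0$ because $d=\deg M\geq 3$ is odd; so the unstable locus is non-empty and the stable locus is a proper open, non-compact subset. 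The contradiction is then purely topological: an injective continuous map from this non-compact set onto the compact space $|D|$ would, by invariance of domain together with surjectivity, be a homeomorphism, which is impossible. Note also that surjectivity of $r$, which you assert without attribution, is itself a substantive input (it comes from \cite[Theorem 1.1]{BdeMM:2011} together with \cite[Theorem 2.4]{LSX:2021}), and your claim that $r$ is \emph{proper} is unjustified---it is precisely the failure of compactness of the source near the unstable locus that the paper exploits. Without establishing that the unstable locus is non-empty, your argument cannot close.
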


For reducible metrics, depending on whether $g_X\geq 2$ or $g_X=1$, we have two distinct results concerning their existence as well as non-uniqueness:

\begin{thm}
\label{thm:gen}
Let $X$ be a compact Riemann surface of genus $g_X\geq 2$, and let $D$ be an effective divisor with even degree 
$\deg\, D\geq 6g_X-2$ on $X$. Then, there exists an effective divisor $D'\in |D|$ that can be represented by a reducible metric. Furthermore, the following two alternatives hold:

\begin{itemize}
\item If $D'$ is represented by a nontrivial reducible metric, there exists an arcwise connected Borel subset of Hausdorff dimension $\geq\deg\, D-4g_{X}+2$ in $|D|$ containing $D'$. This subset includes divisors that can be represented by nontrivial reducible metrics. Moreover, every divisor in this Borel set can be represented by a family of reducible metrics defined by one real parameter.
\item If $D'$ is represented by a trivial reducible metric, there exists an arcwise connected Borel subset of Hausdorff dimension $\geq\deg\, D-4g_{X}$ in $|D|$ containing $D'$. This subset includes divisors that can be represented by trivial reducible metrics. Furthermore, every divisor in this Borel set can be represented by a family of reducible metrics defined by three real parameters.
\end{itemize}
\end{thm}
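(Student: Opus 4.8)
The plan is to route everything through the surjection $\sigma\colon\mathcal{PE}(X)\to\mathcal{M}(X,\mathbb{Z})$ of \eqref{equ:surmap}, restricted to the locus of \emph{decomposable} polystable extensions. Since a reducible metric has monodromy in $\mathrm{U}(1)$, its indigenous bundle is projectively flat with abelian monodromy, which on the algebraic side forces $E=L_1\oplus L_2$ with $\deg L_1=\deg L_2$; the developing section is then a sub-line-bundle $L\hookrightarrow E$ transverse to the two flat summands. Writing $N:=L_2\otimes L_1^{-1}\in\mathrm{Pic}^0(X)$ for the monodromy bundle (automatically unitary flat, as $\deg N=0$), one has $\mathbb{P}(E)\cong\mathbb{P}(\mathcal{O}\oplus N)$, and the metric is trivial reducible exactly when $N\cong\mathcal{O}$—so that the developing map descends to an honest meromorphic function $f\colon X\to\mathbb{P}^1$—and nontrivial otherwise. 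First I would fix the numerology. With $M:=L^{-1}\otimes L_1$, the embedding is a pair $(a,b)$ with $a\in H^0(M)$ and $b\in H^0(M\otimes N)$, and the ramification divisor of the section is the zero divisor of the Wronskian $W(a,b)=a\,\nabla b-b\,\nabla a$, a section of $M^{\otimes2}\otimes N\otimes K_X$. Requiring this class to be $\mathcal{O}(D)$ pins down $M^{\otimes2}\otimes N\cong\mathcal{O}(D)\otimes K_X^{-1}$ and hence $\deg M=\tfrac12\deg D-g_X+1\ge 2g_X$, the inequality being precisely the hypothesis $\deg D\ge 6g_X-2$.

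Because $\deg M\ge 2g_X$, the bundle $M$ is globally generated with $h^0(M)=\tfrac12\deg D-2g_X+2\ge g_X+1\ge 3$; choosing $a,b$ with $W(a,b)\not\equiv0$ produces a genuine non-locally-flat section, hence through $\sigma$ a reducible metric representing some $D'\in|D|$, which settles existence. For the refined statements I would study the \emph{ramification divisor map} $\Phi\colon (a,b)\mapsto[\,W(a,b)\,]\in|D|=\mathbb{P}\!\big(H^0(\mathcal{O}(D))\big)$, and here the two cases separate for a clean structural reason. In the nontrivial case $N\not\cong\mathcal{O}$ the bundles $M$ and $M\otimes N$ are distinct, so $\Phi$ is defined on $\mathbb{P}\!\big(H^0(M)\big)\times\mathbb{P}\!\big(H^0(M\otimes N)\big)$, of complex dimension $2\big(\tfrac12\deg D-2g_X+1\big)=\deg D-4g_X+2$. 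In the trivial case $N\cong\mathcal{O}$ one has $M^{\otimes2}\cong\mathcal{O}(D)\otimes K_X^{-1}$ (finitely many square roots) and $a,b$ lie in the \emph{same} space $H^0(M)$, so $W$ is alternating and $\Phi$ factors through $\mathrm{Gr}\big(2,H^0(M)\big)$, of complex dimension $2\big(\tfrac12\deg D-2g_X\big)=\deg D-4g_X$. These are exactly the two claimed Hausdorff dimensions.

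Next I would manufacture, over each $D'$ in the image, the asserted one- or three-parameter family of metrics from the residual symmetry of the developing map. Two developing maps sharing a Schwarzian differ by post-composition with an element of $\mathrm{PSL}(2,\mathbb{C})$, post-composition never alters the ramification divisor, and the resulting metric is again cone spherical precisely when the new monodromy still lies in $\mathrm{PSU}(2)$. Hence the metrics representing a fixed $D'$ of prescribed reducibility type are parametrized by the elements of $\mathrm{PSL}(2,\mathbb{C})$ carrying the monodromy into $\mathrm{PSU}(2)$, modulo $\mathrm{PSU}(2)$ itself. For nontrivial $\mathrm{U}(1)$-monodromy the relevant centralizer is the diagonal torus $\mathbb{C}^{*}$, and $\mathbb{C}^{*}/\mathrm{U}(1)\cong\mathbb{R}_{>0}$ yields the one-parameter family; for trivial monodromy the centralizer is all of $\mathrm{PSL}(2,\mathbb{C})$, and $\mathrm{PSL}(2,\mathbb{C})/\mathrm{PSU}(2)$ yields the three-parameter family. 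As a consistency check, in each case the base dimension plus the fiber dimension equals $\deg D-4g_X+3$.

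It remains to show that the image of $\Phi$ actually attains the stated dimension and forms an arcwise connected Borel set containing the $D'$ produced above. Arcwise connectedness and the Borel property should be inherited from the domain: $\mathbb{P}\!\big(H^0(M)\big)\times\mathbb{P}\!\big(H^0(M\otimes N)\big)$, respectively $\mathrm{Gr}\big(2,H^0(M)\big)$, is an irreducible projective variety, so its image under the analytic map $\Phi$ is constructible, hence Borel, and arcwise connected; varying $N$ over the connected torus $\mathrm{Pic}^0(X)$ keeps the nontrivial family connected. The hard part will be the dimension lower bound, namely showing that $\Phi$ does not collapse its domain. I would compute $d\Phi_{(a,b)}(\delta a,\delta b)=W(\delta a,b)+W(a,\delta b)$ and bound its rank below at a generic point, tracking real dimensions with care; this is exactly where $\deg M\ge 2g_X$ enters, since global generation of $M$ forbids a common base locus that would otherwise trap $W(a,b)$ in a proper subsystem, and a base-point-free-pencil-type argument should force $d\Phi$ to be injective modulo the fibers identified in the previous paragraph. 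Controlling this rank uniformly—and handling the locus where $a$ and $b$ acquire common zeros, where the section must first be saturated—is the principal technical obstacle; once it is cleared, the Hausdorff dimension of the image equals that of the domain minus a generic fiber, giving the two stated bounds.
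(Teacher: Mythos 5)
Your overall strategy coincides with the paper's: reducible metrics arise from embeddings $L\hookrightarrow J\oplus J^{-1}$ into decomposable polystable bundles, the represented divisor is the zero divisor of the Wronskian-type second fundamental form (the paper's Fundamental Lemma \ref{lemma:divisor}), the numerology $\deg M\ge 2g_X\Leftrightarrow\deg D\ge 6g_X-2$ and the trivial/nontrivial dichotomy via $N\cong\mathcal{O}_X$ (i.e.\ $J^2\cong\mathcal{O}_X$) all match, your formula $d\Phi_{(a,b)}(\delta a,\delta b)=W(\delta a,b)+W(a,\delta b)$ is exactly the paper's expression for $\rho_{*(i_1,i_2)}$ in Proposition \ref{prop:restricted-rdm}, and the one-/three-parameter families come from the same Umehara--Yamada count. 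The genuine gap is the step you yourself flag as ``the principal technical obstacle'': the lower bound on the rank of $d\Phi$. You propose to show $d\Phi$ is injective modulo the evident kernel; that is precisely the paper's Conjecture \ref{conj:rk of tangent map} (that $\operatorname{rank}\rho_{*(i_1,i_2)}=2d-1$, resp.\ $2d-3$), which the authors explicitly leave open, so ``a base-point-free-pencil-type argument should force\dots'' cannot stand in for a proof. What the paper actually proves (Lemma \ref{lemma:lower bound}) is the weaker but sufficient bound $\operatorname{rank}\rho_{*(i_1,i_2)}\ge d$ (resp.\ $d-1$), obtained by restricting the differential to variations $(\delta a,0)$: in a flat frame, $W(\delta a,b)=0$ forces $\delta a=C_\alpha\, b$ locally, and the constants $C_\alpha$ glue to a holomorphic section of $J^2$, which must vanish unless $J^2\cong\mathcal{O}_X$; hence the restricted differential is injective on $H^0(M)\oplus\{0\}$ in the nontrivial case and has one-dimensional kernel in the trivial case. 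This elementary observation is the missing idea.

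Two smaller issues. First, your dimension bookkeeping equates the \emph{complex} dimension of the domain ($\deg D-4g_X+2$, resp.\ $\deg D-4g_X$) with the claimed \emph{real} Hausdorff dimension of the image and treats the numerical match as confirmation; the actual mechanism is that complex rank $\ge d$ of $\rho_*$ yields, after projectivizing, an embedded complex $(d-1)$-dimensional piece of the image in $|D|$, of real Hausdorff dimension $2(d-1)=\deg D-4g_X+2$. (Your hoped-for injectivity would give twice that, consistent with the ``$\ge$'' but currently unprovable.) Second, for the existence step you need a pair $(a,b)$ with \emph{no common zero}, not merely $W(a,b)\not\equiv0$: a common zero divisor $Z$ saturates the subbundle to $L(Z)$ and pushes the ramification divisor into $|D-2Z|$ rather than $|D|$. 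The paper secures this via ampleness of $L^{-1}\otimes(J\oplus J^{-1})$ when $\deg L\le-2g_X$ (Lemma \ref{lemma:ample}); your global-generation remark can be upgraded to this by a codimension count, but as written the step is incomplete.
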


A similar theorem regarding reducible metrics on elliptic curves is stated below.

\begin{thm}
\label{thm:ell}
Let $X$ be an elliptic curve and $D$ an effective divisor of
${\rm even}=\deg\, D\geq 2$ on $X$.
Then there exists an effective divisor $D'\in |D|$ which is represented by a family of reducible metrics, defined by one or three real parameters.
Moreover, if $\deg\, D\geq 4$, then the following alternatives hold:
\begin{itemize}
\item If $D'$ is represented by a nontrivial reducible metric, there exists an arcwise connected Borel subset of Hausdorff dimension $\geq\deg\, D-2$ in $|D|$ containing $D'$. This subset includes divisors that can be represented by nontrivial reducible metrics. Moreover,  every divisor in this Borel set can be represented by a family of nontrivial reducible metrics defined by one real parameter.
\item If $D'$ is represented by a trivial reducible metric, there exists an arcwise connected Borel subset of Hausdorff dimension $\geq\deg\, D-4$ in $|D|$ containing $D'$. This subset includes divisors that can be represented by trivial reducible metrics. Furthermore, every divisor in this Borel set can be represented by a family of trivial reducible metrics defined by three real parameters.
\end{itemize}
If $\deg\,D=2$, then $D'$ must be represented by a nontrivial reducible metric, and there exists an arcwise connected Borel subset of Hausdorff dimension $\geq 1$ in $|D|$ containing $D'$. Moreover,  every divisor in this Borel set can be represented by a family of nontrivial reducible metrics defined by one real parameter.
\end{thm}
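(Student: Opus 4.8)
The plan is to follow the strategy behind Theorem~\ref{thm:gen}, specialising the genus-$g_X$ bookkeeping to $g_X=1$ (so that $K_X=\mathcal{O}_X$) and then treating the low-degree case $\deg D=2$ separately. I would work through the surjection $\sigma$ of \eqref{equ:surmap} restricted to the \emph{decomposable} polystable locus: a reducible metric corresponds to a line subbundle $L\hookrightarrow E=L_1\oplus L_2$ with $\deg L_1=\deg L_2$ and $\mathbb{P}(E)$ unitary flat, the two flat sections of $\mathbb{P}(E)$ being the fixed points of the $\mathrm{U}(1)$-monodromy. Writing $L$ as $(\phi_1,\phi_2)\in H^0(L_1L^{-1})\times H^0(L_2L^{-1})$, the developing map is the multivalued ratio $f=\phi_1/\phi_2$ and $\omega:=\mathrm{d}\log f$ is a single-valued meromorphic $1$-form with integer residues summing to $0$; the cone divisor is the zero divisor of $\omega$ together with its poles of residue of modulus $\ge 2$, and since $K_X=\mathcal{O}_X$ the Wronskian $\phi_1\,\mathrm{d}\phi_2-\phi_2\,\mathrm{d}\phi_1$ is a section of $\mathcal{O}_X(D)$, which ties the construction to the fixed complete linear system $|D|$. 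The essential point is that reducibility---monodromy in $\mathrm{U}(1)$---is equivalent to all periods $\oint_\gamma\omega$ being purely imaginary; this is a \emph{real} codimension-$2g_X=2$ condition on otherwise holomorphic data, and it is the source of the real (rather than complex) Hausdorff dimensions in the statement.

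The trivial/nontrivial dichotomy is governed by whether $N:=L_1L_2^{-1}$ is trivial. For the family counts, which are independent of the dimension estimate, fix $\omega$: the developing map is $f=C\exp\!\int\omega$, and because the phase of $C$ is absorbed into the $\mathrm{U}(1)$-monodromy while $|C|\in\mathbb{R}_{>0}$ genuinely rescales $f^{*}\mathrm{d}s_0^2$, a nontrivial reducible metric occurs in a one-real-parameter family; a trivial reducible metric has single-valued $f$, and post-composition by $\mathrm{PSL}(2,\mathbb{C})$ modulo the isometry group $\mathrm{PSU}(2)$ (a copy of hyperbolic $3$-space) produces the three-real-parameter family. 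A trivial reducible metric moreover requires a genuine map $f\colon X\to\mathbb{P}^1$, whose ramification degree is $2\deg f\ge 4$ by Riemann--Hurwitz on the elliptic curve; hence $\deg D=2$ admits only nontrivial reducible metrics. For the existence of some $D'\in|D|$ I would produce an admissible $\omega$ with the prescribed pole/residue data and then solve the $2g_X$ real period equations using the $H^0(K_X)\cong\mathbb{C}^{g_X}$ worth of holomorphic twists; on an elliptic curve this can be done explicitly with the Weierstrass $\zeta$-function and exploits the translation symmetry of $X$, which is also why no genericity hypothesis on $D$ is required, in contrast with Theorem~\ref{thm:gen}.

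The quantitative statements come from the ramification divisor map $r$ sending admissible data to the zero divisor of the Wronskian in $|D|$. In the trivial case there is no period constraint and $r$ is the complex-analytic Wronski map on the Grassmannian of $2$-planes in $H^0(F)$, where $F^2=\mathcal{O}_X(D)$; once this map is checked to be generically finite its image is a complex subvariety of $|D|$ of complex dimension $\deg D-4$, hence of Hausdorff dimension at least $\deg D-4$. In the nontrivial case $r$ is real-analytic on the real-codimension-$2$ period locus of a holomorphic family, the fibres recording the undetermined positions at which $f$ attains $0$ and $\infty$; exhibiting an open set on which the period map is a submersion and $r$ has the expected rank yields an arcwise-connected real-analytic piece of the image of Hausdorff dimension at least $\deg D-2$. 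Arcwise-connectedness of the resulting Borel set descends from connectedness of the parameter space and continuity of $r$, and Borel measurability from the $\sigma$-compactness of the (real-analytic) parameter space; the two alternatives in the theorem simply record the two values of $N$ and their associated family dimensions.

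For $\deg D=2$ only the nontrivial analysis survives: here $\omega$ has two simple poles of residue $\pm1$ and purely imaginary periods with the zero-divisor class $\mathcal{O}_X(Z(\omega))=\mathcal{O}_X(D)$ fixed, giving a real-analytic family whose image in $|D|$ is arcwise connected of Hausdorff dimension at least $1$, each divisor carrying a one-real-parameter family of nontrivial reducible metrics. The hard part will be the nontrivial Hausdorff-dimension \emph{lower} bound: one must show that the transcendental period map has maximal rank on an open set, so that imposing $\mathrm{Re}\oint_\gamma\omega=0$ does not collapse the image, while simultaneously controlling the fibres of $r$ coming from the non-cone data. It is precisely the interaction between the holomorphic variation of the zeros and poles of $f$ and the real period constraint---rather than either in isolation---that requires the delicate analysis.
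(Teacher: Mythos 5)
Your framework is genuinely different from the paper's: you work transcendentally with the logarithmic differential $\omega=\mathrm{d}\log f$ of a developing map and the condition that its periods be purely imaginary, whereas the paper stays entirely inside the algebraic picture of strictly polystable extensions $0\to L\to J\oplus J^{-1}\to L^{-1}\to 0$ with $J\in\operatorname{Pic}^0(X)$, identifies the represented divisor with $\operatorname{Div}(\beta^{*})$ for the second fundamental form $\beta^{*}\in H^0(X,L^{-2}\otimes K_X)$ (Lemma \ref{lemma:divisor}), and gets the dimension bounds from an explicit local computation of the differential of the restricted ramification divisor map (Proposition \ref{prop:restricted-rdm} and Lemma \ref{lemma:lower bound}). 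Your outline of the existence part, the one-versus-three parameter counts, and the Riemann--Hurwitz argument ruling out trivial reducible metrics when $\deg D=2$ are all sound and consistent with the paper.

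The genuine gap is that the quantitative core of the theorem --- the Hausdorff dimension lower bounds --- is exactly the step you defer. You write that ``the hard part will be'' showing the period map has maximal rank on an open set and that the ramification divisor map has the expected rank; but without that, none of the bounds $\geq\deg D-2$, $\geq\deg D-4$, or $\geq 1$ is established, and this is precisely where the paper invests its effort: Lemma \ref{lemma:lower bound} proves $\operatorname{rank}\rho_{*(i_1,i_2)}\geq d$ (resp.\ $d-1$) by writing $\rho_{*}$ in flat frames and analyzing when $\mathrm{d}u_\alpha\cdot s_{2,\alpha}-\mathrm{d}s_{2,\alpha}\cdot u_\alpha$ can vanish, which is a nontrivial argument (the degenerate case forces $J^2=\mathcal{O}_X$). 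Likewise your trivial-case bound rests on an unverified generic finiteness of the Wronski map on $\operatorname{Gr}_2\bigl(H^0(F)\bigr)$. For $\deg D=2$ two further ingredients are missing from your sketch: (i) arcwise connectedness of the parameter space, which the paper obtains from Proposition \ref{prop:extensionsoflinebundle-1} --- a separate analysis using Atiyah's classification of rank-two bundles on an elliptic curve to show the non-strictly-polystable locus in $\mathbb{P}\bigl(\operatorname{Ext}^1_X(L^{-1},L)\bigr)\cong\mathbb{P}^1$ is exactly four points; and (ii) non-constancy of the map to $|D|$, which the paper proves by translating by the four $2$-torsion points of $X$ to exhibit at least four distinct divisors in the image. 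Your proposal asserts rather than proves the corresponding maximal-rank statement, so as written the argument does not close.
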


We present a corollary of the preceding two theorems, as stated below.

\begin{cor}
\label{cor:generic}
Let $X$ be a compact Riemann surface of genus $g_X \geq 1$, and consider $D$ as a generic effective divisor on $X$ with an even degree, satisfying $\deg D \geq 6g_X - 2$.
Under these conditions, we can identify an arcwise connected Borel subset within the linear system $|D|$, which exhibits a Hausdorff dimension of at least $\deg D - 4g_X+2$. Within this subset, each divisor $D'$ can be uniquely associated with a family of nontrivial reducible metrics, characterized by a single real parameter.
\end{cor}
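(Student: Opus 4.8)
The plan is to read Corollary \ref{cor:generic} off Theorems \ref{thm:gen} and \ref{thm:ell} as the assertion that, for generic $D$, it is the \emph{nontrivial} alternative of the relevant theorem that is realized. First I would split on the genus. For $g_X\geq 2$ the hypothesis $\deg D\geq 6g_X-2$ is exactly that of Theorem \ref{thm:gen}; for $g_X=1$ it reads $\deg D\geq 4$, so the $\deg D\geq 4$ clause of Theorem \ref{thm:ell} applies. In either case one obtains an effective $D'\in|D|$ carrying a reducible metric together with the two alternatives, and in both theorems the nontrivial alternative delivers verbatim the conclusion sought: an arcwise connected Borel subset of $|D|$ of Hausdorff dimension at least $\deg D-4g_X+2$, each of whose members is represented by a one-real-parameter family of nontrivial reducible metrics. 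Thus the whole task is to rule out, for generic $D$, the trivial alternative and to certify that the divisors in this Borel set carry only nontrivial reducible representatives.

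To this end I would pin down the locus $T\subset|D|$ of divisors admitting a \emph{trivial} reducible metric. Such a metric has single-valued developing map, hence is the pullback of the Fubini--Study metric under a meromorphic $f\colon X\to\mathbb{P}^1$ whose ramification divisor is the represented divisor. Writing $M:=f^{*}\mathcal{O}_{\mathbb{P}^1}(1)$, the Riemann--Hurwitz formula gives $\deg f=\tfrac12\deg D-g_X+1=:d$ and places the ramification divisor in $|K_X\otimes M^{\otimes 2}|$; matching line-bundle classes forces $M^{\otimes 2}\cong\mathcal{O}_X(D)\otimes K_X^{-1}$, so $M$ is one of the $2^{2g_X}$ square roots of the fixed class $\mathcal{O}_X(D)\otimes K_X^{-1}$. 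Because $\deg D\geq 6g_X-2$ gives $d\geq 2g_X>2g_X-2$, every such $M$ is automatically nonspecial and globally generated, with $h^{0}(M)=d-g_X+1$, and the ramification divisor of a pencil $V\in G\big(2,H^{0}(M)\big)$ is the image of $V$ under the Wronskian morphism to $|K_X\otimes M^{\otimes 2}|=|D|$. Since $\dim_{\mathbb{C}}G\big(2,H^{0}(M)\big)=2\big(h^{0}(M)-2\big)=\deg D-4g_X$, the locus $T$ is a union of finitely many images of this complex dimension, hence a proper closed subvariety of $|D|$ of complex codimension at least $3g_X>0$.

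I would then exploit the free $\mathrm{U}(1)$-character in the construction behind \eqref{equ:surmap}: the reducible metrics supplied by Theorems \ref{thm:gen} and \ref{thm:ell} come in families over the monodromy character, and choosing this character away from the identity produces genuinely nontrivial reducible metrics representing divisors in $|D|$, realizing the nontrivial alternative with its Borel subset $S\subset|D|$ of Hausdorff dimension at least $\deg D-4g_X+2$. I would then argue that, for generic $D$, the divisors of $S$ lie outside the proper subvariety $T$; such a divisor therefore admits no branched-cover developing map and hence no trivial reducible metric, so its reducible representatives are exactly the nontrivial one-real-parameter family. This is precisely the statement of the corollary.

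The main obstacle I anticipate is certifying, for generic $D$, that the nontrivial family $S$ is not swallowed by the branched-cover locus $T$ — equivalently, that nontrivial reducible metrics genuinely occur and that their divisors escape $T$ rather than merely coexisting with trivial representatives. Controlling this requires more than the bound $\mathrm{codim}\,T\geq 3g_X$, since $S$ is a thin real-analytic subset of $|D|$ and could a priori sit inside the higher real-dimensional complex variety $T$; one must instead use the structure of the extensions underlying $S$ to separate them from the square roots of $\mathcal{O}_X(D)\otimes K_X^{-1}$. A secondary point is to make the word ``generic'' uniform, simultaneously placing $D$ in the locus where Theorem \ref{thm:gen} (resp. Theorem \ref{thm:ell}) outputs the full-dimensional Borel set and outside the exceptional set where the extensions degenerate to the trivial character; the hypothesis $\deg D\geq 6g_X-2$, through the bound $d\geq 2g_X$, is what keeps every relevant square root nonspecial and thus keeps $T$ of the expected dimension.
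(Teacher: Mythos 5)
Your overall strategy coincides with the paper's: read Corollary \ref{cor:generic} off the nontrivial alternative of Theorems \ref{thm:gen} and \ref{thm:ell}, and use a dimension count to dispose of trivial reducible metrics, which are exactly pullbacks of the Fubini--Study metric under branched covers $X\to\mathbb{P}^1$. The implementations of the count differ. The paper works globally in ${\rm Sym}^{\mathfrak d}(X)$, where ${\mathfrak d}=\deg D$: it bounds the locus $\mathcal{R}_{\mathfrak d}$ of ramification divisors of degree-$\frac{{\mathfrak d}+2-2g_X}{2}$ covers by ${\mathfrak d}-3$ via the Riemann existence theorem, and concludes only that a generic $D$ is not itself such a ramification divisor. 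You work fiberwise in $|D|$: you enumerate the $2^{2g_X}$ square roots $M$ of $K_X^{-1}\otimes\mathcal{O}_X(D)$ and bound the Wronskian image of $G\bigl(2,H^0(M)\bigr)$ by ${\mathfrak d}-4g_X$, so the trivial locus has codimension $3g_X$ in $|D|$. Your fiberwise bound is the one actually relevant to the uniqueness claim, since the divisors $D'$ in the Borel subset range over $|D|$ rather than being $D$ itself; in that respect your computation is sharper and closer to what the statement requires. Your further observation that the nontrivial alternative can always be realized by choosing $J$ with $J^2\neq\mathcal{O}_X$ (via Lemma \ref{lemma:ample} and Theorem \ref{thm:inclusionindirectsum}) is correct and is implicit in the proofs of the two theorems. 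Both arguments, however, stall at the same point, which you state explicitly and the paper does not address at all: neither dimension bound by itself prevents the Borel set of the nontrivial alternative, of Hausdorff dimension ${\mathfrak d}-4g_X+2$, from sitting inside the trivial locus, whose real dimension $2({\mathfrak d}-4g_X)$ is at least as large once ${\mathfrak d}\geq 4g_X+2$. So your proposal reproduces the paper's argument in a somewhat stronger form, and the obstacle you isolate is a genuine one that the paper's own proof leaves open rather than a defect specific to your approach.
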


\subsection{Outline}
Section 2 provides a detailed explanation of the ramification divisor map, employing the framework of polystable rank two vector bundles and their line subbundles. This map serves to describe the divisors associated with cone spherical metrics. Additionally, the fundamental lemma for reducible metrics in our story is proven, laying the foundation for the further research. In Section 3, by applying Brouwer's invariance of domain theorem to the ramification divisor map, we prove Theorem \ref{thm:irr}. This demonstrates a kind of non-uniqueness of irreducible metrics representing effective divisors of odd degree. Section 4 serves as a theoretical foundation for the investigation of the existence and uniqueness of reducible metrics. In Section 5, we define the restricted ramification divisor map in Proposition \ref{prop:restricted-rdm} and compute the complex differential of this map. By using this map, we obtain Theorems \ref{thm:gen}, \ref{thm:ell}, and Corollary \ref{cor:generic}, which address the existence and non-uniqueness results of reducible metrics. Finally, in Section 6, we introduce two open questions that have the potential to enhance our conclusion and stimulate further discussion.

\section{Ramification divisor map}

In this section, we present a comprehensive exposition of the {\it ramification divisor map} within the context of polystable rank two vector bundles and their line subbundles. This map plays a crucial role in characterizing the divisors associated with cone spherical metrics. Furthermore, we establish the fundamental lemma for reducible metrics, which serves as a cornerstone for subsequent investigations in our study of reducible metrics.

\subsection{Revisiting the definition of the ramification divisor map}
In \cite[Theorem 2.1]{LSX:2021}, the authors introduced the ramification divisor map to investigate irreducible metrics. Due to its deep connection with cone spherical metrics, this subsection aims to provide a comprehensive description of the map (refer to Proposition \ref{prop:rdm}) in a unified manner for both reducible and irreducible metrics.

Let $X$ be a compact Riemann surface endowed with a K\"{a}hler form $\omega$. We consider the following polystable extension:
\begin{equation}
\label{equ:strictly poly-stable}
\mathbb{E}:0\to L\stackrel{\iota}{\to} E\stackrel{p}{\to} M\to 0,
\end{equation}
where $E$ is a polystable rank $2$ vector bundle on $X$.
Subsequently, we can choose a Hermitian-Einstein metric $h_{E}$ on $E$.  By utilizing $\mathbb{E}$ and $h_{E}$, we can equip
$L$ with the induced metric, while $M$ is endowed with the quotient metric.
Indeed, the Hermitian metric $h_{E}$ on $E$ induces a $\mathcal{C}^{\infty}$ splitting $p^{*}\colon M\to E$ such that $p\circ p^{*}={\rm Id}_{M}$. The adjoint homomorphism $\iota^{*}\colon E\rightarrow L$ serves as the orthogonal projection of $E$ onto $L$.
We can define $h_{L}(\cdot,\cdot):=h_{E}(\iota\cdot,\iota\cdot)$ and $h_{M}(\cdot,\cdot):=h_{E}(p^{*}\cdot,p^{*}\cdot)$. Considering the corresponding Chern connections $D_{E}$, $D_{L}$, $D_{M}$, and utilizing the $\mathcal{C}^{\infty}$ isomorphism $\iota^{*}\oplus p\colon E\to L\oplus M$, we can express $D_{E}$ as (\cite[p.273]{Demailly:2012}):
\begin{equation}
\label{equ:matrix}
D_{E} = \begin{pmatrix}
D_{L} & -\beta \\
\beta^{*} & D_{M}
\end{pmatrix},
\end{equation}
where $\beta^{*}$ is called the second fundamental form of $L$ in $E$.
It is worth noting that $h_{L}(s,\beta t)=h_{M}(\beta^{*}s,t)$ for any $s\in \Gamma(L)$ and $t\in\Gamma(M)$. Since $D_{E}$ is Hermitian-Einstein, we can observe the following curvature equation:
\begin{equation*}
\label{equ:curvature}
\begin{split}
\lambda\omega \otimes {\rm Id}_{E} = D_{E} \circ D_{E} &= \begin{pmatrix}
D_{L} \circ D_{L} - \beta \wedge \beta^{*} & - \left(D_{L} \circ \beta + \beta \circ D_{M}\right) \\
\beta^{*} \circ D_{L} + D_{M} \circ \beta^{*} & -\beta^{*} \wedge \beta + D_{M} \circ D_{M}
\end{pmatrix} \\
&= \begin{pmatrix}
\Theta_{L} - \beta \wedge \beta^{*} & -D_{{\rm Hom}(M, L)} (\beta)\\
D_{{\rm Hom}(L, M)}(\beta^{*}) & - \beta^{*} \wedge \beta + \Theta_{M}
\end{pmatrix},
\end{split}
\end{equation*}
where $\lambda=-2\pi \sqrt{-1}\frac{\mu{(E)}}{\rm{vol}(X,\,\omega)}$ and $\mu{(E)}=\frac{\deg E}{{\rm rk} E}$ is the slope of the bundle $E$.

Thus, we obtain $D_{{\rm Hom}(L, M)}(\beta^{*}) = 0$, which implies $\bar{\partial}_{D_{{\rm Hom}(L, M)}}(\beta^{*}) = 0$ due to $X$ being a Riemann surface. Therefore, we conclude that $\beta^{*}$ is holomorphic, i.e. $\beta^*\in H_{\bar{\partial}}^{1,0}\big(X, {\rm Hom}(L, M)\big)$, where $H_{\bar{\partial}}^{1,0}\big(X, {\rm Hom}(L, M)\big)$ denotes the space of holomorphic 1-forms valued in ${\rm Hom}(L, M)$. Alternatively, $\beta^{*}$ can be described as a holomorphic section of the line bundle ${\rm Hom}(L, M)\otimes K_{X}$, with $K_{X}$ representing the canonical line bundle on $X$.

Furthermore, using the expression \eqref{equ:matrix}, we derive that 
\begin{equation}
\label{equ:2ndfund}
\beta^{*}=(p\otimes {\rm id})\circ D_{E}\circ \iota=(p\otimes {\rm id})\circ \partial_{E}\circ \iota,    
\end{equation}
where $\partial_{E}$ denotes the $(1,0)$-part of $D_{E}$. Specifically, if $s$ is a local section of $L$, then $\iota(s)$ is a local section of $E$. Through the $\mathcal{C}^{\infty}$ isomorphism $\iota^{*}\oplus p\colon E\to L\oplus M$, we deduce that $\iota^{*}\circ\iota(s)$ is a local section of $L$, and $p\circ \iota(s)$ is a a local section of $M$. Thus, we obtain:
\begin{equation*}
D_{E}\circ \iota(s)= \begin{pmatrix}
D_{L} & -\beta \\
\beta^{*} & D_{M}
\end{pmatrix} \begin{pmatrix}
\iota^{*}\circ\iota(s) \\
p\circ \iota(s)
\end{pmatrix}= \begin{pmatrix}
D_{L} & -\beta \\
\beta^{*} & D_{M}
\end{pmatrix} \begin{pmatrix}
s \\
0
\end{pmatrix}=\begin{pmatrix}
D_{L}(s) \\
\beta^{*}(s)
\end{pmatrix},
\end{equation*}
Thus $(p\otimes {\rm id})\circ D_{E}\circ \iota(s)=\beta^{*}(s)$.

\begin{lem}
\label{lem:well_defn_1}
If we choose another Hermitian-Einstein metric $\tilde{h}_{E}$ on $E$, we will still obtain the same $(1,0)$-form $\beta^{*}$.
\end{lem}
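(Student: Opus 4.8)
The plan is to trace exactly where the choice of Hermitian--Einstein metric enters the construction of $\beta^{*}$ and to show that this dependence is spurious. By \eqref{equ:2ndfund} we have $\beta^{*}=(p\otimes\mathrm{id})\circ\partial_{E}\circ\iota$, and here the inclusion $\iota\colon L\to E$, the projection $p\colon E\to M$, and the holomorphic structure $\bar\partial_{E}$ are all intrinsic to the extension $\mathbb{E}$ and therefore independent of $h_{E}$. (The $\mathcal{C}^{\infty}$ splitting $p^{*}$ does depend on the metric, but it does not appear in \eqref{equ:2ndfund}.) Consequently the only $h_{E}$-dependent ingredient is the $(1,0)$-part $\partial_{E}$ of the Chern connection $D_{E}$, and I would reduce the lemma to the assertion that $\partial_{E}$ — equivalently $D_{E}$, since its $(0,1)$-part is always $\bar\partial_{E}$ — is the same for any two Hermitian--Einstein metrics on $E$.

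To compare two such metrics $h_{E}$ and $\tilde h_{E}$, I would introduce the $h_{E}$-self-adjoint positive automorphism $\eta:=h_{E}^{-1}\tilde h_{E}\in\Gamma(\mathrm{End}\,E)$ representing $\tilde h_{E}$ relative to $h_{E}$. Since the two Chern connections share the same $(0,1)$-part $\bar\partial_{E}$, their difference is a $(1,0)$-form valued in $\mathrm{End}\,E$, and the standard change-of-metric formula expresses it as $\tilde\partial_{E}-\partial_{E}=\eta^{-1}\,\partial_{\mathrm{End}\,E}\eta$, where $\partial_{\mathrm{End}\,E}$ is the $(1,0)$-part of the connection induced by $h_{E}$ on $\mathrm{End}\,E$. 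Substituting into \eqref{equ:2ndfund} gives $\tilde\beta^{*}-\beta^{*}=(p\otimes\mathrm{id})\circ(\eta^{-1}\,\partial_{\mathrm{End}\,E}\eta)\circ\iota$, so it suffices to show that $\partial_{\mathrm{End}\,E}\eta$ vanishes (and in fact I will get the stronger $D_{\mathrm{End}\,E}\eta=0$).

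This is where the Hermitian--Einstein hypothesis does the work, and it is the main point of the argument. Because $\lambda=-2\pi\sqrt{-1}\,\mu(E)/\mathrm{vol}(X,\omega)$ depends only on $\deg E$, $\mathrm{rk}\,E$ and $\omega$, the two metrics satisfy the Hermitian--Einstein equation with the \emph{same} Einstein constant. I would then invoke the classical uniqueness statement (Kobayashi; L\"ubke--Teleman) that any two Hermitian--Einstein metrics with the same Einstein constant on a holomorphic bundle over a compact K\"ahler manifold differ by a $D_{E}$-parallel automorphism. Note that polystability (rather than stability) of $E$ is exactly what guarantees the existence of these metrics and the validity of the parallelism statement: when $E$ is stable, $\eta$ is a positive constant scalar; when $E$ is strictly polystable, e.g.\ $E\cong L\oplus L$, $\eta$ is block-constant on the isotypic summands; and in either case $\eta$ is parallel. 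I expect this parallelism to be the only genuine obstacle, the rest being bookkeeping with the change-of-metric formula.

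Finally, parallelism gives $\tilde\partial_{E}-\partial_{E}=\eta^{-1}\,\partial_{\mathrm{End}\,E}\eta=0$, hence $\tilde\partial_{E}=\partial_{E}$, and therefore $\tilde\beta^{*}=\beta^{*}$ by \eqref{equ:2ndfund}. If a self-contained proof is preferred to citing the uniqueness theorem, the parallelism can be obtained directly: writing the curvature of $\tilde h_{E}$ in terms of that of $h_{E}$ and $\eta$, subtracting the two trace (Hermitian--Einstein) equations, and pairing the result against $\eta^{-1}$ (or against $\log\eta$) produces, after one integration by parts over the compact surface $X$, a nonnegative quantity whose vanishing is equivalent to $D_{\mathrm{End}\,E}\eta=0$.
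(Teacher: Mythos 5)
Your proof is correct, but it takes a different route from the paper's. The paper argues by cases: when $E$ is stable, any two Hermitian--Einstein metrics differ by a positive constant, so the Chern connection is unchanged; when $E$ is strictly polystable, it normalizes to $E=J\oplus J^{-1}$ with $J\in\operatorname{Pic}^0(X)$ and asserts that every Hermitian--Einstein metric there has the diagonal form $ah\oplus bh^{*}$, whence $D_E=D_J\oplus D_{J^{*}}$ is independent of $a,b$. You instead isolate the single metric-dependent ingredient $\partial_E$ in \eqref{equ:2ndfund}, write $\eta=h_E^{-1}\tilde h_E$, and invoke the general uniqueness theorem for Hermitian--Einstein metrics (two such metrics, necessarily with the same Einstein constant, differ by a $D_E$-parallel automorphism), so that the change-of-metric formula $\tilde\partial_E-\partial_E=\eta^{-1}\partial_{\operatorname{End}E}\eta$ gives $\tilde\partial_E=\partial_E$ at once. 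Your version is uniform and arguably more robust: the paper's diagonal normal form $ah\oplus bh^{*}$ is only the full story when $J^2\neq\mathcal{O}_X$ (when $J$ is $2$-torsion the family of Hermitian--Einstein metrics on $J\oplus J$ is larger, parametrized by constant positive-definite Hermitian matrices, though these are still parallel so the conclusion survives), and your appeal to parallelism absorbs that case without comment. The trade-off is that you lean on the Kobayashi/L\"ubke--Teleman uniqueness theorem, whereas the paper stays elementary and self-contained; your closing sketch of the integration-by-parts proof of parallelism would close that gap if a self-contained argument were required.
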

\begin{proof}
If $E$ is a stable vector bundle, then $\tilde{h}_{E}=\lambda h_{E}$, where $\lambda$ is a positive constant. In this case, the pair $(\bar{\partial}_{E},\tilde{h}_{E})$ yields the same Chern connection $D_{E}$ as the pair $(\bar{\partial}_{E},h_{E})$.

Assuming that $E$ is a polystable bundle but not stable, we can conclude that $E$ has an even degree. By tensoring with a line bundle, we can consider the following exact sequence:
\[
0\to L\stackrel{\iota}{\to} E\stackrel{p}{\to} L^{-1}\to 0.
\]
Since $\det E\cong \det L\otimes \det L^{-1}= \mathcal{O}_{X}$ and $\deg E=0$, we have $E=J \oplus J^{-1}$, where $J\in {\rm Pic}^0(X)$. Let $h$ be a Hermitian-Einstein metric on $J$. Then $h$ gives rise to a $\mathbb{C}$-antilinear isomorphism $$\varphi \colon J \to J^{-1},\quad \varphi(u)(v):=h(v,u)$$
as complex line bundles, and hence induces a metric $h^{*}$ on $J^{-1}$ with the form
$$h^{*}\big(\varphi(u),\varphi(v)\big):=h(v,u).$$
It can be verified that $h^{*}$ is a Hermitian metric on $J^{-1}$, and that it is also Hermitian-Einstein. Since $\deg J=\deg J^{-1}=0$, the metric $h_{E}=h\oplus h^{*}$ is Hermitian-Einstein on $J\oplus J^{-1}$.

If one opts for an alternative Hermitian-Einstein metric $\tilde{h}_{E}$ on $J\oplus J^{-1}$,  it can be expressed as  $\tilde{h}_{E}=a h\oplus b h^{*}$, where $a$ and $b$ are positive constants. It can be verified that the same Chern connection arises from the pairs $(\bar{\partial}_{E},h_{E})$ and $(\bar{\partial}_{E},\tilde{h}_{E})$. Actually, given that $E=J\oplus J^{-1}$, we have $h_{E}=h\oplus h^{*}$ and $\bar{\partial}_{E}=\bar{\partial}_{J}\oplus \bar{\partial}_{J^*}$.  Notably, $D_{J}$ is the Chern connection for both  $(\bar{\partial}_{J},h)$ and $(\bar{\partial}_{J},ah)$, and $D_{J^*}$ is the Chern connection for both $(\bar{\partial}_{J^*},h^*)$ and $(\bar{\partial}_{J^*},bh^*)$, where $a$ and $b$ are positive constants. Therefore, $D_{E}=D_{J}\oplus D_{J^*}$ is the Chern connection for both $(\bar{\partial}_{E},h_{E})$ and $(\bar{\partial}_{E},\tilde{h}_{E})$ pairs. Using the formula $\beta^{*}=(p\otimes {\rm id})\circ \partial_{E}\circ \iota$, where $\partial_{E}$ denotes the $(1,0)$-part of $D_{E}$, we obtain the same $\beta^{*}$ in both cases.
\end{proof}

Suppose that there exists another short exact sequence
$0 \to L \to F \to M \to 0$
which is equivalent to
(\ref{equ:strictly poly-stable}) in ${\rm Ext}_{X}^1(M,\, L)$, i.e. there exists a commutative diagram of holomorphic bundle maps as follows:
\begin{displaymath}
\xymatrix{
0 \ar[r] & L \ar@{=}[d] \ar[r] & E\ar[d]\ar@{->}^{f}[d]  \ar[r] & M \ar@{=}[d] \ar[r] &0 \\
0 \ar[r] & L \ar[r]                        & F \ar[r]                                   &M \ar[r]                                  &0
}
\end{displaymath}
where $f:E\rightarrow F$ is an isomorphism (\cite[(14.8), p.275]{Demailly:2012}). This gives us the following short exact sequence
\begin{equation}
\label{equ:extensionF}
\quad 0 \to L \xrightarrow{f \circ \iota} F \xrightarrow{p \circ f^{-1}} M \to 0.
\end{equation}
Since $f$ is holomorphic, we have  $\bar{\partial}_{F}=f \circ \bar{\partial}_{E} \circ f^{-1}$. We define $h_{F}(\cdot,\cdot):=h_{E}(f^{-1}\cdot,f^{-1}\cdot)$. Then $D_{F}=f \circ D_{E} \circ f^{-1}$ is the Chern connection with respect to $\bar{\partial}_{F}$  and $h_{F}$, and $h_{F}$ serves as the Hermitian-Einstein metric on $F$.

\begin{lem}
\label{lem:well_defn_2}
According to the $\mathcal{C}^{\infty}$ isomorphism $(\iota^{*}\circ f^{-1})\oplus (p\circ f^{-1})\colon F\to L\oplus M$, the Chern connection $D_{F}$ can be expressed by
$$\displaystyle{D_{F} = \begin{pmatrix}
D_{L} & -\beta \\
\beta^{*} & D_{M}
\end{pmatrix}},$$ 
coinciding with  $D_E$ in \eqref{equ:matrix}.
\end{lem}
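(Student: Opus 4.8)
The plan is to use that, by the very definition $h_F(\cdot,\cdot):=h_E(f^{-1}\cdot,f^{-1}\cdot)$, the holomorphic isomorphism $f$ is in fact an \emph{isometry} $f\colon(E,h_E)\to(F,h_F)$, i.e.\ $h_F(fu,fv)=h_E(u,v)$. Everything then follows by transporting the Hermitian--holomorphic structure of $\mathbb{E}$ through $f$; the asserted matrix identity is just the statement that this transport carries the trivialization $\iota^*\oplus p$ of $E$ onto the trivialization $(\iota^*\circ f^{-1})\oplus(p\circ f^{-1})$ of $F$.

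First I would set $\Phi_E:=\iota^*\oplus p\colon E\to L\oplus M$ and $\Phi_F:=(\iota^*\circ f^{-1})\oplus(p\circ f^{-1})\colon F\to L\oplus M$, and note the elementary identity $\Phi_F=\Phi_E\circ f^{-1}$. Next I would verify that $f$ being an isometry preserves all the auxiliary data: the subbundle metric that $h_F$ induces on $L$ (via $f\circ\iota$) coincides with $h_L$, and the quotient metric it induces on $M$ coincides with $h_M$, so the Chern connections $D_L$ and $D_M$ in the matrix for $F$ are literally those of $E$; moreover a one-line adjunction shows that the $h_F$-orthogonal projection $F\to L$ is $\iota^*\circ f^{-1}$ and the $\mathcal{C}^{\infty}$ splitting $M\to F$ is $f\circ p^*$. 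Since $f$ is holomorphic we also have $\bar\partial_F=f\circ\bar\partial_E\circ f^{-1}$ and hence $\partial_F=f\circ\partial_E\circ f^{-1}$ for the $(1,0)$-parts; substituting this into the intrinsic formula \eqref{equ:2ndfund} for the second fundamental form of $F$, the factors $f$ and $f^{-1}$ cancel and yield $\beta_F^*=\beta^*$, and the compatibility $h_L(s,\beta t)=h_M(\beta^* s,t)$ then forces $\beta_F=\beta$.

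With these identifications in hand the conclusion is immediate: transporting $D_F$ through $\Phi_F$ and using $\Phi_F=\Phi_E\circ f^{-1}$ together with $D_F=f\circ D_E\circ f^{-1}$, the maps $f^{-1}$ and $f$ cancel, giving
\[
(\Phi_F\otimes\mathrm{id})\circ D_F\circ\Phi_F^{-1}=(\Phi_E\otimes\mathrm{id})\circ D_E\circ\Phi_E^{-1}=\begin{pmatrix}D_L&-\beta\\\beta^*&D_M\end{pmatrix},
\]
which is exactly the matrix \eqref{equ:matrix} for $E$. I do not expect a genuine obstacle here, since the statement is precisely the functoriality of the Chern connection under a unitary holomorphic isomorphism. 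The only point deserving care is the bookkeeping with the $\mathcal{C}^{\infty}$ adjoints and the induced metrics: one must check that replacing $h_E$ by $h_F$ changes neither the induced metrics on $L$ and $M$ nor the orthogonal splitting, so that the four entries of the two matrices are the \emph{same} objects rather than merely conjugate ones. All of this reduces to the single isometry identity $h_F(fu,fv)=h_E(u,v)$.
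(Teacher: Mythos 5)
Your proposal is correct, and it reorganizes the argument in a way that differs from the paper's write-up. The paper verifies the four matrix entries one at a time: it first observes that the metrics $h_E$ and $h_F$ induce the same metrics on $L$ and $M$ (hence $D_L$ and $D_M$ are unchanged), and then identifies the off-diagonal entries by invoking Demailly's Theorem 14.3 a) and d), computing $\bar{\partial}_{\operatorname{Hom}(M,F)}(f\circ p^{*})$ and $\partial_{\operatorname{Hom}(L,F)}(f\circ\iota)$ via the Leibniz rule and the holomorphicity of $f$ to conclude $\widehat{\beta}=\beta$ and $\widehat{\beta}^{*}=\beta^{*}$. You instead prove the whole matrix identity in one stroke from $\Phi_F=\Phi_E\circ f^{-1}$ and $D_F=f\circ D_E\circ f^{-1}$, since pushing forward a connection by a composite of bundle isomorphisms composes functorially, so $(\Phi_F)_{*}D_F=(\Phi_E)_{*}D_E$. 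The one point your route genuinely must supply --- and you do flag it --- is that $(\iota^{*}\circ f^{-1})\oplus(p\circ f^{-1})$ is the \emph{canonical} trivialization for $(F,h_F)$, i.e.\ that $\iota^{*}\circ f^{-1}$ is the $h_F$-adjoint of $f\circ\iota$ and $f\circ p^{*}$ is the $h_F$-induced splitting of $p\circ f^{-1}$; this follows from the isometry identity $h_F(fu,fv)=h_E(u,v)$ exactly as you say. The underlying ingredients (holomorphy of $f$, the isometric definition of $h_F$) are the same in both proofs; what your packaging buys is that you avoid any appeal to the explicit second-fundamental-form formulas from Demailly, at the cost of the small extra check that the two trivializations correspond under $f$. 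Both are complete proofs.
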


\begin{proof}
In line with the $\mathcal{C}^{\infty}$ isomorphism $(\iota^{*}\circ f^{-1})\oplus (p\circ f^{-1})\colon F\to L\oplus M$, we can represent the Chern connection $D_{F}$ as 
$$\displaystyle{ D_{F} = \begin{pmatrix} \widehat{D_{L}} & -\widehat{\beta} \\ \hat{\beta}^{*} & \widehat{D_{M}} \end{pmatrix}}.$$ Given that $h_{L}(\cdot,\cdot)=h_{E}(\iota\cdot,\iota\cdot)=h_{F}(f\circ \iota\cdot,f\circ \iota\cdot)$, the metrics induced by both $h_{E}$ and $h_{F}$ on $L$ are identical, leading to $D_{L}=\widehat{D_{L}}$ and $D_{M}=\widehat{D_{M}}$. Moreover, drawing from Theorem 14.3 d) in \cite[p.273]{Demailly:2012}, we deduce that $\bar{\partial}_{\operatorname{Hom}(M,E)}p^{*}=-\iota\circ \beta$ and $\bar{\partial}_{\operatorname{Hom}(M,F)}(f\circ p^{*})=-(f\circ\iota)\circ \widehat{\beta}$. Further, our calculations show that $$\bar{\partial}_{\operatorname{Hom}(M,F)}(f\circ p^{*})=f\bar{\partial}_{\operatorname{Hom}(M,E)}p^{*}+\big(\bar{\partial}_{\operatorname{Hom}(E,F)}f\big) p^{*}=f\circ(-\iota\circ \beta) =-(f\circ \iota)\circ \beta,$$ thereby concluding that $\widehat{\beta}=\beta$. Additionally, according to Theorem 14.3 a) in \cite[p.273]{Demailly:2012}, it is affirmed that $\partial_{\operatorname{Hom}(L,E)}\iota=p^{*}\circ \beta^{*}$ and $\partial_{\operatorname{Hom}(L,F)}(f\circ\iota)=(f\circ p^{*})\circ \widehat{\beta}^{*}$. Our calculation yields $$\partial_{\operatorname{Hom}(L,F)}(f\circ \iota)=f\partial_{\operatorname{Hom}(L,E)} \iota+\big(\partial_{\operatorname{Hom}(E,F)}f\big)\circ \iota= f\circ \partial_{\operatorname{Hom}(L,E)}\iota=(f\circ p^{*})\circ \beta^{*},$$ leading us to the conclusion that  $\widehat{\beta}^{*}=\beta^{*}$.
\end{proof}

According to \cite[Proposition 14.9]{Demailly:2012}, there exists a one-to-one correspondence  between the space of isomorphism classes of extensions of $L$ by $M$, denoted as ${\rm Ext}_{X}^{1}(M,L)$, and the cohomology group $H^{1}\big(X,{\rm Hom}(M,L)\big)$. Consider a short exact sequence
$
\displaystyle{\mathbb{E}:0\to L\stackrel{\iota}{\to} E\stackrel{p}{\to} M\to 0}
$,
where $E$ is a rank $2$ polystable vector bundle on $X$.  We employ a Hermitian-Einstein metric on $E$ and define $\beta^{*}$ as the second fundamental form of $L$ in $E$, with $\beta$ being the adjoint of $\beta^{*}$. Consequently, we establish the following mapping
\begin{align*}
{\rm Ext}_{X}^{1}(M,L)\supset {\rm Ext}_{X}^{1}(M,L)^{\rm ps}  &\to H^{1}(X,{\rm Hom}(M,L))\\
  \{\mathbb{E}\} &\mapsto \{\beta\},
\end{align*}
where ${\rm Ext}_{X}^{1}(M,L)^{\rm ps}$ represents the set of isomorphism classes of all polystable extensions in  ${\rm Ext}_{X}^{1}(M,L)$.
By utilizing Lemmas \ref{lem:well_defn_1} and \ref{lem:well_defn_2}, we establish a well-defined map as follows:
\begin{align*}
\Phi \colon {\rm Ext}_{X}^{1}(M,L)^{\rm ps} &\to H_{\bar{\partial}}^{1,0}\big(X, {\rm Hom}(L, M)\big) \\
  \{\beta\} &\mapsto \beta^{*}.
\end{align*}
Furthermore, for each $\lambda\in \mathbb{C}^{*}$, $\Phi$ maps the extension
\[
\mathbb{\lambda E}:\ 0\rightarrow L\stackrel{\iota/\lambda}{\longrightarrow}E\stackrel{p}{\rightarrow} M\rightarrow 0,
\]
to $(p\otimes {\rm id})\circ \partial_{E}\circ (\iota/\lambda)=\frac{\beta^{*}}{\lambda}$. As a result, $\Phi$ induces a map as follows, modulo the $\mathbb{C}^{*}$-action on both its domain and  target:
\begin{align*}
{\mathfrak R}_{(L, M)}\colon \mathbb{P}\Big({\rm Ext}_{X}^{1}(M,L)^{\rm ps}\Big) &\to \mathbb{P}\Big(H_{\bar{\partial}}^{1,0}\big(X, {\rm Hom}(L, M)\big)\Big)\\
  [\beta] &\mapsto [\beta^{*}]
\end{align*}
In this context, $\mathbb{P}\Big({\rm Ext}_{X}^{1}(M,L)^{\rm ps}\Big)$ and $\mathbb{P}\Big(H_{\bar{\partial}}^{1,0}\big(X, {\rm Hom}(L, M)\big)\Big)$ represents the projectivizations of ${\rm Ext}_{X}^{1}(M,L)^{\rm ps}$ and $H_{\bar{\partial}}^{1,0}\big(X, {\rm Hom}(L, M)\big)$, respectively. Furthermore, $[\beta]$ denotes the projectivization of  $\{\beta\}$.
In summary, this yields the following proposition:

\begin{prop}
\label{prop:rdm}
Consider the short exact sequence
$0\to L\to E\to M\to 0$, where $E$ is a polystable vector bundle on $X$.
Let's choose a Hermitian-Einstein metric on $E$ to define the second fundamental form $\beta^{*}$ of $L$ in $E$, and let $\beta$ be the adjoint of $\beta^{*}$. We can then define the ramification divisor map ${\mathfrak R}_{(L, M)}$ as follows:
\begin{align*}
{\mathfrak R}_{(L, M)}\colon \mathbb{P}\Big({\rm Ext}_{X}^{1}(M,L)^{\rm ps}\Big) &\to \mathbb{P}\Big(H_{\bar{\partial}}^{1,0}\big(X, {\rm Hom}(L, M)\big)\Big)\\
  [\beta] &\mapsto [\beta^{*}]
\end{align*}
This map is well-defined.
\end{prop}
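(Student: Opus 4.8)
The plan is to assemble the proposition directly from the two well-definedness lemmas together with the homogeneity computation recorded just above the statement; the content is entirely the verification that the assignment $\{\beta\}\mapsto\beta^{*}$ respects both the identifications built into the domain and the $\mathbb{C}^{*}$-quotient, and that the image is nonzero.

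First I would record that $\beta^{*}$ is a legitimate element of the target. From the Hermitian--Einstein curvature equation the off-diagonal entry gives $D_{{\rm Hom}(L,M)}(\beta^{*})=0$, and since $X$ is a Riemann surface this forces $\bar{\partial}(\beta^{*})=0$; hence $\beta^{*}\in H^{1,0}_{\bar{\partial}}\big(X,{\rm Hom}(L,M)\big)$, exactly as needed. Next I would check that the set-theoretic map $\Phi$ is well defined on ${\rm Ext}^{1}_{X}(M,L)^{\rm ps}$: an isomorphism class of polystable extensions is presented by data $(\mathbb{E},h_{E})$, and $\beta^{*}$ must be insensitive to both choices. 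Independence of the Hermitian--Einstein metric $h_{E}$ is Lemma \ref{lem:well_defn_1}; independence of the particular representative of the isomorphism class is Lemma \ref{lem:well_defn_2}, applied to any equivalence $f\colon E\to F$ in ${\rm Ext}^{1}_{X}(M,L)$. Combined with the Demailly correspondence ${\rm Ext}^{1}_{X}(M,L)\cong H^{1}\big(X,{\rm Hom}(M,L)\big)$, under which $\{\mathbb{E}\}$ is sent to the Dolbeault class $\{\beta\}$, this shows that $\Phi(\{\beta\})=\beta^{*}$ depends only on $\{\beta\}$.

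The second step is the $\mathbb{C}^{*}$-equivariance needed to pass to projectivizations. Rescaling the inclusion by $\iota\mapsto\iota/\lambda$ produces the extension $\lambda\mathbb{E}$, whose class in ${\rm Ext}^{1}_{X}(M,L)$ equals $\lambda\{\beta\}$ (rescaling the inclusion by $\lambda^{-1}$ multiplies the extension class by $\lambda$), while formula \eqref{equ:2ndfund} gives $\Phi(\lambda\mathbb{E})=(p\otimes{\rm id})\circ\partial_{E}\circ(\iota/\lambda)=\beta^{*}/\lambda$. Thus $\Phi(\lambda\{\beta\})=\lambda^{-1}\Phi(\{\beta\})$, so $\Phi$ carries each punctured line $\mathbb{C}^{*}\cdot\{\beta\}$ into the punctured line $\mathbb{C}^{*}\cdot\beta^{*}$ and therefore descends to a map $\mathbb{P}\big({\rm Ext}^{1}_{X}(M,L)^{\rm ps}\big)\to\mathbb{P}\big(H^{1,0}_{\bar{\partial}}(X,{\rm Hom}(L,M))\big)$, $[\beta]\mapsto[\beta^{*}]$. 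To see that the projectivized target genuinely receives a point, I would note that a point of the domain is a nonsplit extension, so its class $\{\beta\}$ is nonzero; since the metric adjoint $\beta$ of the holomorphic form $\beta^{*}$ is harmonic and represents $\{\beta\}$, we have $\beta^{*}=0\iff\beta=0\iff\{\beta\}=0$, whence $\beta^{*}\neq0$.

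Since all the genuinely analytic work is packaged in Lemmas \ref{lem:well_defn_1} and \ref{lem:well_defn_2}, no single step is a serious obstacle. The only point demanding real care is the bookkeeping of the two commuting $\mathbb{C}^{*}$-actions---verifying that the rescaling $\iota\mapsto\iota/\lambda$ multiplies the extension class by $\lambda$ while dividing $\beta^{*}$ by $\lambda$---together with the nonvanishing of $\beta^{*}$ that ensures the image lands in the projective space rather than at the undefined origin.
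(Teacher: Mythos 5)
Your proposal is correct and follows essentially the same route as the paper: the well-definedness is exactly the combination of Lemma \ref{lem:well_defn_1} (independence of the Hermitian--Einstein metric), Lemma \ref{lem:well_defn_2} (independence of the representative of the extension class), and the homogeneity computation $\Phi(\lambda\mathbb{E})=\beta^{*}/\lambda$ that lets the map descend to the projectivizations. Your additional observation that $\beta^{*}\neq 0$ because $\beta$ is the harmonic representative of the nonzero class $\{\beta\}$ is a small but welcome point that the paper leaves implicit.
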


\subsection{Fundamental lemma for reducible metrics}
In this subsection, we introduce the fundamental lemma (referenced as Lemma \ref{lemma:divisor}) concerning reducible metrics. This lemma is crucial for establishing a connection between reducible metrics and the previously mentioned ramification divisor map.
\begin{defn}
\label{defn:sp1}
Consider $X$ as a compact Riemann surface with genus $g_X \geq 0$, and let $L$ be a line bundle where $\deg L \leq 0$. A short exact sequence
\[0 \to L \to J \oplus J^{-1} \to L^{-1} \to 0, \quad J \in {\rm Pic}^0(X),\]
is termed {\it strictly polystable} if the section $f \colon X \to P := {\mathbb P}(J \oplus J^{-1})$ induced by the embedding $L \hookrightarrow J \oplus J^{-1}$ in this sequence is not locally flat. In other words, $f$ is non-constant on all open charts of $X$ corresponding to the projective unitary flat trivializations of $P$ (\cite[Definition 3.1.]{LSX:2021}). An extension in ${\rm Ext}^1(L^{-1}, L)$ is {\it strictly-polystable} if it is represented by a strictly polystable short exact sequence.
\end{defn}
For any strictly polystable short exact sequence of the form:
\[
\mathbb{E}: 0 \to L \stackrel{i}{\to} J \oplus J^{-1} \stackrel{p}{\to} L^{-1} \to 0, \quad J \in {\rm Pic}^0(X),
\]
we can select a Hermitian-Einstein metric $h$ on $J \oplus J^{-1}$. Consequently, we derive $\beta^{*} = (p \otimes {\rm id}) \circ \partial_{J \oplus J^{-1}} \circ i$, where $\partial_{J \oplus J^{-1}}$ denotes the $(1,0)$-part of the Hermitian-Einstein connection on $J \oplus J^{-1}$. In our subsequent analysis, we will demonstrate that $\beta^{*}$ acts as a holomorphic section of the line bundle $L^{-2} \otimes K_{X}$, and its associated divisor aligns with the effective divisor represented by the reducible metric defined by the aforementioned polystable extension.

\begin{lem}[Fundamental Lemma]
\label{lemma:divisor}
For a strictly polystable short exact sequence $0 \to L \stackrel{i}{\to} J \oplus J^{-1} \stackrel{p}{\to} L^{-1} \to 0$, employing a Hermitian-Einstein metric on $J \oplus J^{-1}$ yields $\beta^{*} \in H^{0}(X, L^{-2} \otimes K_X)$ as the second fundamental form of $L$. If $D$ denotes the effective divisor corresponding to the reducible metric for the embedding $L \stackrel{i}{\to} J \oplus J^{-1}$, then $\operatorname{Div}(\beta^{*})$, the associated divisor of $\beta^{*}$, is identical to $D$.
\end{lem}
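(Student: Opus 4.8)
The plan is to exploit the fact that $\beta^{*}$ is nothing but the covariant derivative of the holomorphic section of $P=\mathbb{P}(J\oplus J^{-1})$ cut out by $L$, so that its zero divisor records exactly the ramification of the developing map. First I would pin down the bundle: since $M=L^{-1}$, one has $\operatorname{Hom}(L,M)\otimes K_{X}=L^{-1}\otimes L^{-1}\otimes K_{X}=L^{-2}\otimes K_{X}$, so $\beta^{*}\in H^{0}(X,L^{-2}\otimes K_{X})$ by Proposition~\ref{prop:rdm}. The decisive structural input is that $\deg(J\oplus J^{-1})=0$, whence $\lambda=0$ in the curvature equation and the Hermitian-Einstein connection $D_{J\oplus J^{-1}}=D_{J}\oplus D_{J^{-1}}$ is flat; its holonomy (in $\mathrm{PU}(1)\subset\mathrm{PSU}(2)$) is precisely what makes $P$ projective unitary flat and the section $f$ a developing map of the associated reducible metric.

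Next I would compute $\beta^{*}$ in a local flat frame. Because $D_{J}$ and $D_{J^{-1}}$ are flat, I can choose local parallel frames $e_{1}$ of $J$ and $e_{2}$ of $J^{-1}$; a parallel frame of a flat Hermitian holomorphic line bundle is holomorphic, so $D_{E}e_{i}=0$ and in particular $\partial_{E}e_{i}=0$. Over such a chart the subbundle $L$ is generated by a holomorphic section $\sigma=a\,e_{1}+b\,e_{2}$ with $a,b$ holomorphic, and in the flat trivialization the developing map reads $f=b/a$. Using $\beta^{*}=(p\otimes\mathrm{id})\circ\partial_{E}\circ i$ together with $\partial_{E}e_{i}=0$, I obtain $\partial_{E}\sigma=a'\,e_{1}\otimes\mathrm{d}z+b'\,e_{2}\otimes\mathrm{d}z$; projecting modulo $L$ (where $e_{1}\equiv-(b/a)e_{2}$) gives $\beta^{*}(\sigma)=\frac{ab'-a'b}{a}\,[e_{2}]\otimes\mathrm{d}z$, with $[e_{2}]$ the image of $e_{2}$ in $M=E/L$. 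Thus $\beta^{*}$ is governed by the Wronskian $W=ab'-a'b$, which is exactly the numerator of $\mathrm{d}f=\frac{ab'-a'b}{a^{2}}\,\mathrm{d}z$.

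Finally I would match vanishing orders. On the locus $a\neq0$ the objects $\sigma$, $[e_{2}]$, $\mathrm{d}z$ are local frames of $L$, $M$, $K_{X}$, so $\operatorname{ord}(\beta^{*})=\operatorname{ord}(W/a)=\operatorname{ord}(W)$, which equals the order of $\mathrm{d}f=\frac{W}{a^{2}}\mathrm{d}z$. On the complementary locus $a=0$ (where $f=\infty$) I would rescale the generator to $\tilde\sigma=(a/b)e_{1}+e_{2}$, holomorphic since $b\neq0$ there, compute $\beta^{*}(\tilde\sigma)=\frac{-W}{b^{2}}[e_{1}]\otimes\mathrm{d}z$, and read $\operatorname{ord}(\beta^{*})=\operatorname{ord}(W)$ again, matching the order of $\mathrm{d}(1/f)=\frac{-W}{b^{2}}\mathrm{d}z$ in the chart at $\infty$. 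Since the ramification divisor of the developing map is by definition $\operatorname{Div}(\mathrm{d}f)$ and this has local order $\operatorname{ord}(W)$ everywhere, I conclude $\operatorname{Div}(\beta^{*})=\operatorname{Div}(W)=D$; strict polystability guarantees $f$ is non-constant, i.e.\ $W\not\equiv0$, so $\beta^{*}\not\equiv0$ and the divisor is genuinely effective. The main obstacle is the bookkeeping at the points where $f=\infty$: one must change both the local generator of $L$ and the local frame of $M$ consistently and check that the two chart computations glue to the single global identity $\operatorname{Div}(\beta^{*})=\operatorname{Div}(\mathrm{d}f)$; a minor point to confirm is that the effective divisor represented by the reducible metric $f^{*}\mathrm{d}s_{0}^{2}$ is indeed $\operatorname{Div}(\mathrm{d}f)$, which follows from the local normal form of $f$ at a branch point.
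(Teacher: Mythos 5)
Your proposal is correct and follows essentially the same route as the paper's proof: both pass to a local flat (hence holomorphic) frame of $J\oplus J^{-1}$ coming from the unitary flat structure, express the embedding as $\sigma=a\,e_{1}+b\,e_{2}$, compute $\beta^{*}$ as the derivative of the projective section $f=b/a$ (the paper normalizes the frame of $L$ by $1/s_{2,\alpha}$ or $1/s_{1,\alpha}$ to get $\beta^{*}=\mathrm{d}w_{\alpha}\otimes p(e_{\alpha})$, which is your Wronskian computation in disguise), and match vanishing orders on the two loci $a\neq 0$ and $b\neq 0$. The only cosmetic difference is that you phrase the result via the Wronskian $W=ab'-a'b$ while the paper works directly with $\mathrm{d}w_{\alpha}$ and $\mathrm{d}(1/w_{\alpha})$.
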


\begin{proof}
It is important to note that the embedding $L \stackrel{i}{\to}J\oplus J^{-1}$ can be equivalently represented by a pair of sections $(i_1,\,i_2)$, where each belongs to $H^0\big({\rm Hom}(L,\,J)\big)$ and $H^0\big({\rm Hom}(L,\,J^{-1})\big)$ respectively, and notably, these sections do not share any common zero. The combined polystability and flatness of the bundle $J\oplus J^{-1}$ facilitate the construction of an open cover $\{U_{\alpha}\}$ of $X$. This cover is characterized by constant transition functions on the intersections $U_{\alpha\beta}=U_{\alpha}\cap U_{\beta}$, specifically defined as $\displaystyle{g_{\alpha\beta}=\begin{pmatrix} a_{\alpha\beta} & 0 \\ 0 & a_{\alpha\beta}^{-1} \end{pmatrix}}$, where $a_{\alpha\beta}\in\rm{U}(1)$.
Furthermore, the open covers can be chosen to be sufficiently small so that for each index $\alpha$, the corresponding set $U_{\alpha}$ contains at most one zero of the sections $i_{1}$ and $i_{2}$. Considering a holomorphic flat frame $(e_{\alpha},e^{*}_{\alpha})$ for the restricted bundle $(J\oplus J^{-1})|_{U_{\alpha}}$ in accordance with the transition functions $\{U_{\alpha\beta},g_{\alpha\beta}\}$, we can select a local holomorphic frame $l_{\alpha}$ for $L$ over $U_{\alpha}$, with the associated transition functions $\ell_{\alpha\beta}$ satisfying $l_{\beta}=\ell_{\alpha\beta}l_{\alpha}$. For the embedding $L\stackrel{i} \hookrightarrow J\oplus J^{-1}$, we express $i(l_{\alpha})$ as $s_{1,\alpha}e_{\alpha}+s_{2,\alpha}e^{*}_{\alpha}$, where $s_{1,\alpha}$ and $s_{2,\alpha}$ are devoid of common zeros. Consequently, on the overlap $U_{\alpha}\cap U_{\beta}$, the following relationships are derived:
\begin{align*}
s_{1,\alpha}e_{\alpha}+s_{2,\alpha}e^{*}_{\alpha}=&
i(l_{\alpha})=i(\ell^{-1}_{\alpha\beta}l_{\beta})=\ell^{-1}_{\alpha\beta}i(l_{\beta})\\
=&\ell^{-1}_{\alpha\beta}\big(s_{1,\beta}e_{\beta}+s_{2,\beta}e^{*}_{\beta}\big)
=\ell^{-1}_{\alpha\beta}s_{1,\beta}a_{\alpha\beta}e_{\alpha}+\ell^{-1}_{\alpha\beta}s_{2,\beta}a^{-1}_{\alpha\beta}e^{*}_{\alpha}
\end{align*}
From these expressions, it follows that $s_{1,\alpha}=\ell^{-1}_{\alpha\beta}a_{\alpha\beta}\cdot s_{1,\beta}$ and $s_{2,\alpha}=\ell^{-1}_{\alpha\beta}a^{-1}_{\alpha\beta}\cdot s_{2,\beta}$. Therefore, the assembled collection $\left\{{U_{\alpha},w_{\alpha}\colon=\frac{s_{1,\alpha}}{s_{2,\alpha}}}\right\}$ constitutes a global holomorphic section of $\mathbb{P}(J\oplus J^{-1})$, denoted by $w:X\rightarrow \mathbb{P}(J\oplus J^{-1})$, satisfying $w_{\alpha}=a^{2}_{\alpha\beta}\cdot w_{\beta}$. Given that $a_{\alpha\beta}\in U(1)$, and ${\rm d}s^{2}_{\alpha}$ is defined as the cone spherical metric constructed by $w^{*}_{\alpha}({\rm d}s_{0}^{2})$ on $U_{\alpha}$, where ${\rm d}s_{0}^{2}$ represents the Fubini-Study metric on the Riemann sphere, a globally defined cone spherical metric, denoted by ${\rm d}s^{2}$, is obtained. Furthermore, the construction of ${\rm d}s^{2}$ clearly indicates that the divisor $D$, represented by ${\rm d}s^{2}$, aligns with the ramification divisor $B_{\{U_{\alpha},w_{\alpha}\}}$ of the branched projective covering $\{U_{\alpha},w_{\alpha}\}$ of $X$.
Given that each $U_{\alpha}$ contains at most one zero point for the sections $i_{1}$ and $i_{2}$, the analysis bifurcates into two cases:

1. If $U_{\alpha}$ does not contain any zeros of $i_{2}$, then $s_{2,\alpha}$ has no zeros. In such a scenario, we define $\hat{l}_{\alpha}=\frac{l_{\alpha}}{s_{2,\alpha}}$, which also forms a holomorphic frame of $L$ on $U_{\alpha}$. With this definition, we have $i(\hat{l}_{\alpha})=w_{\alpha}e_{\alpha}+e_{\alpha}^*$, leading to the expression:
\begin{align*}
\beta^{*}(\hat{l}_{\alpha})=&(p\otimes {\rm id})\circ \partial_{J\oplus J^*}\circ i(\hat{l}_{\alpha})
=(p\otimes {\rm id})\circ \partial_{J\oplus J^*}(w_{\alpha}e_{\alpha}+e_{\alpha}^*)\\
=&(p\otimes {\rm id})(dw_{\alpha}\otimes e_{\alpha})
=dw_{\alpha}\otimes p(e_{\alpha}),
\end{align*}
where $p(e_{\alpha})$ serves as a holomorphic frame of $L^{-1}$ on $U_{\alpha}$.

2. If $U_{\alpha}$ contains no zeros of $i_{1}$, then $s_{1,\alpha}$ has no zeros. In this case, we define $\tilde{l}_{\alpha}=\frac{l_{\alpha}}{s_{1,\alpha}}$, also a holomorphic frame of $L$ on $U_{\alpha}$. Consequently, we have $i(\tilde{l}_{\alpha})=e_{\alpha}+\frac{1}{w_{\alpha}}e_{\alpha}^*$, and hence:
\begin{align*}
\beta^{*}(\tilde{l}_{\alpha})=&(p\otimes {\rm id})\circ \partial_{J\oplus J^*}\circ i(\tilde{l}_{\alpha})
=(p\otimes {\rm id})\circ \partial_{J\oplus J^*}\bigg(e_{\alpha}+\frac{1}{w_{\alpha}}e_{\alpha}^*\bigg)\\
=&(p\otimes {\rm id})\Bigg(d\left(\frac{1}{w_{\alpha}}\right)\otimes e_{\alpha}^*\Bigg)
={\rm d}\left(\frac{1}{w_{\alpha}}\right)\otimes p(e_{\alpha}^*),
\end{align*}
where $p(e_{\alpha})^*$ represents a holomorphic frame of $L^{-1}$ on $U_{\alpha}$.

In both cases, the associated divisor $\operatorname{Div} (\beta^{*})$ of $\beta^{*}$ is shown to be identical to the ramification divisor $B_{\{U_{\alpha},w_{\alpha}\}}$. 
\end{proof}

\section{Proof of Theorem \ref{thm:irr}}

Building on the foundational work of Bartolucci, De Marchis, and Malchiodi (\cite{BdeMM:2011}), as well as Troyanov (\cite{Troyanov:1991}), it is established that for any effective divisor $D$ with an odd degree exceeding $2g_{X} - 2$, there invariably exists an irreducible metric that represents $D$. In this section, we aim to prove Theorem \ref{thm:irr} by employing Brouwer's invariance of domain theorem on the surjective map $\sigma$ referenced in \eqref{equ:surmap}. 
This method substantiates the hypothesis that irreducible metrics, which represent effective divisors of an odd degree greater than $2g_X$, are {\it not} unique.

\begin{lem}
\label{lem:inj}
Let $L$ and $M$ be two line bundles on a compact Riemann surface $X$  with $\deg L < \deg M$, such that there exists a stable extension of $M$ by $L$. The restriction of the map $\sigma$
defined in 
\eqref{equ:surmap} to the projectivization $\mathbb{P}\left(\operatorname{Ext}_{X}^1(M, L)^{s}\right)$ of the space $\operatorname{Ext}_{X}^1(M, L)^{s}$, comprising stable extensions of $M$ by $L$, is an injective map into the space $\mathcal{MI}(X, \mathbb{Z})$ of irreducible metrics representing effective divisors on $X$.
\end{lem}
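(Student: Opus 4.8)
The plan is to reconstruct the stable extension class, up to the $\mathbb{C}^*$-scaling that the projectivization $\mathbb{P}\big(\operatorname{Ext}_X^1(M,L)^s\big)$ already quotients out, from the irreducible metric it produces. So I would start with two stable extensions $\mathbb{E}_j\colon 0\to L\xrightarrow{\iota_j} E_j\xrightarrow{p_j} M\to 0$, $j=1,2$, with $\sigma([\mathbb{E}_1])=\sigma([\mathbb{E}_2])={\rm d}s^2$, and try to show $[\mathbb{E}_1]=[\mathbb{E}_2]$. Since ${\rm d}s^2$ is irreducible, each $E_j$ is stable; this is compatible with $\deg L<\deg M$, which forces $\deg L<\mu(E_j)$, the stability bound for the subbundle $L$, and, crucially, gives $\operatorname{Aut}(E_j)=\mathbb{C}^*$.

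First I would recover the geometric data feeding $\sigma$ from the metric alone. A developing map of ${\rm d}s^2$ is unique up to post-composition by ${\rm PSU}(2)$, and reading it in flat charts produces a projective unitary flat $\mathbb{P}^1$-bundle $P$ together with a non-locally-flat holomorphic section $s$, uniquely up to isomorphism; this reverses the construction of \cite{LSX:2021} recalled above. The flat structure equips $P$ with a canonical holomorphic structure, and in the construction underlying $\sigma$ one has $P\cong\mathbb{P}(E_j)$ with $s$ cutting out the line subbundle $\mathcal{L}_j:=\iota_j(L)\subset E_j$. Hence both $E_1$ and $E_2$ yield the same holomorphic projective bundle $P$ and the same section $s$.

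Next I would promote this to an isomorphism of vector bundles respecting the extensions. From $\mathbb{P}(E_1)\cong P\cong\mathbb{P}(E_2)$ one gets $E_2\cong E_1\otimes N$ for some line bundle $N$; comparing the subbundles determined by the common $s$ gives $\mathcal{L}_2\cong\mathcal{L}_1\otimes N$, and since $\mathcal{L}_1\cong L\cong\mathcal{L}_2$ this forces $N\cong\mathcal{O}_X$ and $E_1\cong E_2$. Fixing an isomorphism $\psi\colon E_1\to E_2$ lifting the identity of $P$ — unique up to scalar because $\operatorname{Aut}(E_1)=\mathbb{C}^*$ — the map $\psi$ carries $\mathcal{L}_1$ to $\mathcal{L}_2$ and so identifies the two sequences up to the scalar automorphisms of $L$ and $M$. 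As $\operatorname{Aut}(L)\times\operatorname{Aut}(M)=\mathbb{C}^*\times\mathbb{C}^*$ acts on $\operatorname{Ext}_X^1(M,L)\cong H^1\big(X,L\otimes M^{-1}\big)$ purely by rescaling the class (compatibly with the computation $\Phi(\lambda\mathbb{E})=\beta^*/\lambda$ noted before Proposition \ref{prop:rdm}), the classes $\mathbb{E}_1,\mathbb{E}_2$ differ by a nonzero scalar, giving $[\mathbb{E}_1]=[\mathbb{E}_2]$.

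I expect the main obstacle to be the first step: showing that the metric pins down $(P,s)$ — including the holomorphic projective bundle and the holomorphic section, not merely the monodromy — up to isomorphism. Irreducibility is what makes this work, since it is precisely irreducibility that renders the reconstructed $E_j$ stable and hence rigid, so that the twist ambiguity in $\mathbb{P}(E_j)$ is killed by the constraint $\mathcal{L}_j\cong L$ and the only residual freedom is the scalar already removed by projectivizing. The remaining steps are essentially bookkeeping about how automorphisms of $L$ and $M$ rescale an extension class.
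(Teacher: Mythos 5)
Your proposal is correct and follows essentially the same route as the paper's proof: both recover the common pair $(P,s)$ from the irreducible metric via \cite[Theorem 3.1]{LSX:2021}, write the ambiguity in $\mathbb{P}(E_1)\cong\mathbb{P}(E_2)$ as a twist $E_2\cong E_1\otimes T$, kill the twist by matching the line subbundle $L$ on both sides, and conclude the two extension classes are co-linear. The only cosmetic differences are that the paper additionally notes $T$ is $2$-torsion from $\det E_1=\det E_2$ (not actually needed once $L\cong L\otimes T$ forces $T\cong\mathcal{O}_X$), while you make the final co-linearity step more explicit via $\operatorname{Aut}(E_j)=\mathbb{C}^*$.
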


\begin{proof}
According to \cite[Theorem 2.2]{LSX:2021}, the space $\operatorname{Ext}_{X}^1(M, L)^{s}$ is Zariski open in $\operatorname{Ext}_{X}^1(M, L)$, and so is $\mathbb{P}\left(\operatorname{Ext}_{X}^1(M, L)^{s}\right)$ in $\mathbb{P}\left(\operatorname{Ext}_{X}^1(M, L)\right)$. Suppose that there exist two stable extensions
\[ ({\mathbb E}_1): 0\to L\to E_1\to M\to 0\quad \text{and}\quad ({\mathbb E}_2): 0\to L\to E_2\to M\to 0 \]
which define the same irreducible cone spherical metric. Utilizing \cite[Theorem 3.1]{LSX:2021}, we obtain ${\mathbb P}(E_1)={\mathbb P}(E_2)$, denoted by $P$. Let $s_{i} \colon X \to P$ be the section of $P$ induced by $L\hookrightarrow E_i$ ($i=1,\, 2$). Then, there exists a line bundle $T$ over $X$ such that there is an isomorphism $\eta\colon E_1\to E_2\otimes T$ since $\mathbb{P}(E_{1}) = \mathbb{P}(E_{2})$ (see \cite[p.515]{GH:1994}). Moreover, $T$ is a 2-torsion  since $\det E_{1} = \det E_{2}$, and $s_{2}$ is the image of $s_{1}$ under the automorphism of $P$ induced by $\eta$, as $\mathbb{E}_{1}$ and $\mathbb{E}_{2}$ define the same cone spherical metric. This leads to the following commutative diagram:

\[ \xymatrix{
0\ar[r] &L\ar[r]\ar[d]^{\eta_{|L}} &E_{1}\ar[d]^{\eta} \\
0\ar[r] &L\otimes T \ar[r] &E_{2} \otimes T,
}\]
where we view the equivalence class $(\mathbb{E}_{2})$ as an element in $\operatorname{Ext}_{X}^{1}(M\otimes T, L\otimes T)$ represented by $0\to L\otimes T \to E_2\otimes T \to M \otimes T \to 0$, via the following isomorphisms 
$$\operatorname{Ext}_{X}^1(M, L) \cong H^{1}(X, L\otimes M^{-1}) \cong H^{1}\left(X, (L\otimes T) \otimes (M\otimes T)^{-1}\right) \cong \operatorname{Ext}_{X}^1(M\otimes T, L \otimes T).$$
By examining the commutative diagram, we deduce that $L\cong \eta(L) = L \otimes T$. Consequently, $T \cong \mathcal {O}_X$ yields the identity map of $P$. As a result, the two line subbundles $L\hookrightarrow E_1$ and $L\hookrightarrow E_2$ induce the same section of $P\to X$, i.e., $s_{1} = s_{2}$, and the two extensions $({\mathbb E}_1)$ and $({\mathbb E}_2)$ are co-linear in $\operatorname{Ext}_{X}^1(M,L)$.
\end{proof}

\begin{proof}[Proof of Theorem \ref{thm:irr}]

Choose the line bundle $M$ on $X$ such that $M=K_X^{-1}\otimes {\mathcal O}_X(D)$. Then $d:=\deg M$ is an odd integer $\geq 3$. The proof  involves the following three steps.

Firstly, by \cite[Theorem 2.4]{LSX:2021} and \cite[Theorem 1.1]{BdeMM:2011}, the ramification divisor map $\mathfrak{R}_{(\mathcal {O}_X,M)}$ is a \emph{real analytic} \emph{surjective} map from the projectivization $\mathbb{P}\left(\operatorname{Ext}_{X}^1(M,\mathcal{O}_X)^{s}\right)$ of the space $\operatorname{Ext}_{X}^1(M,\mathcal {O}_X)^{s}$ to $|K_X\otimes M|=|D|$. By the Riemann-Roch theorem, the rational map
     \begin{eqnarray*}
     \varphi=\varphi_{K_X\otimes M} \colon X&\to& {\mathbb P}\left(\operatorname{Ext}_{X}^1(M,\mathcal{O}_X)\right)={\mathbb P}\Bigl(\bigl(H^0(X,\, K_X\otimes M)\bigr)^*\Bigr)\\
     x&\mapsto& \left\{\tau \in H^0(X,\, K_X\otimes M) \mid \tau(x)=0\right\}
     \end{eqnarray*}
     becomes a non-degenerate embedding of $X$ into ${\mathbb P}\left(\operatorname{Ext}_{X}^1(M,\mathcal{O}_X)\right)$. Utilizing \cite[Proposition 1.1]{LN:1983}, a nontrival extension $0\to \mathcal {O}_X\to E\to M\to 0$ is unstable if and only if it lies in the secant variety $\operatorname{Sec}_{(d-1)/2}\bigl(\varphi(X)\bigr)$, which has dimension $\frac{d-1}{2}>0$. Thus, the domain $\mathbb{P}\left(\operatorname{Ext}_{X}^1(M,\mathcal{O}_X)^{s}\right)$ of the ramification divisor map ${\mathfrak R}_{({\mathcal O}_X, M)}$ is a Zariski open and proper subset of ${\mathbb P}\left(\operatorname{Ext}_{X}^1(M,\mathcal{O}_X)\right)$. In particular, it is non-compact.

Secondly, \emph{${\mathfrak R}_{(\mathcal {O}_X, M)} \colon \mathbb{P}\left(\operatorname{Ext}_{X}^1(M,\mathcal{O}_X)^{s}\right) \to |D|$ must not be one-to-one.} Suppose that it is injective. Then, since it is continuous, ${\mathfrak R}_{(\mathcal{O}_X,M)}$ is open by Brouwer's invariance of domain theorem \cite{Brouwer:1912}. As it is also surjective, $\mathfrak {R}_{(\mathcal {O}_X, M)}$ would establish a homeomorphism between $\mathbb{P}\left(\operatorname{Ext}_{X}^1(M,\mathcal{O}_X)^{s}\right)$ and the complete linear system $|D|$. This contradicts the non-compactness of $\mathbb{P}\left(\operatorname{Ext}_{X}^1(M,\mathcal{O}_X)^{s}\right)$.

Thirdly, following the result of the second step, there exists a divisor $D' \in |D|$ with at least two distinct pre-images, namely
\[ ({\mathbb E}_1): 0\to {\mathcal O}_X\to E_1\to M\to 0\quad \text{and}\quad ({\mathbb E}_2): 0\to {\mathcal O}_X\to E_2\to M\to 0,\]
under  ${\mathfrak R}_{({\mathcal O}_X,\, M)}$. Hence, these two stable extensions $({\mathbb E}_1)$ and $({\mathbb E}_2)$ are not co-linear in $\operatorname{Ext}_{X}^1(M,\,{\mathcal O}_X)$. By Lemma \ref{lem:inj}, they give two non-isometric irreducible cone spherical metrics representing $D'$ on $X$.
\end{proof}

\section{Strictly polystable extensions and integral reducible metrics}

In this section, we initiate a thorough analysis of the existence problem associated with strictly polystable extensions. This foundational theoretical framework is essential for our upcoming detailed examination of reducible metrics, which we will explore in the subsequent section. Moreover, we demonstrate that various projectivizations of strictly polystable extension classes result in uniquely distinct, yet projectively equivalent, classes of reducible metrics, as elaborated in Proposition \ref{prop:red}.

\subsection{Existence of strictly polystable extensions}

\begin{lem}
\label{lem:negative}
 Let $X$ be a compact Riemann surface of genus $g_X \geq 0$ and $L$ a line bundle on $X$. Then there exists a strictly polystable extension of $L^{-1}$ by $L$ solely when $\deg\, L < 0$. Furthermore, if $\deg\, L < 0$ and $J \in \operatorname{Pic}^{0}(X)$, then a short exact sequence $0\to L\to J \oplus J^{-1} \to L^{-1}\to 0$ must be strictly polystable.
\end{lem}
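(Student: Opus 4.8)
The plan is to separate the two assertions of the lemma: (i) the \emph{necessity} that $\deg L<0$ whenever a strictly polystable extension of $L^{-1}$ by $L$ exists, and (ii) the \emph{rigidity} statement that, once $\deg L<0$, \emph{every} short exact sequence $0\to L\to J\oplus J^{-1}\to L^{-1}\to 0$ with $J\in\operatorname{Pic}^0(X)$ is strictly polystable. In both parts the basic device is to decompose the embedding $i\colon L\hookrightarrow J\oplus J^{-1}$ through the two projections, obtaining $i_1\in H^0\big(X,L^{-1}\otimes J\big)$ and $i_2\in H^0\big(X,L^{-1}\otimes J^{-1}\big)$. Since the cokernel $L^{-1}$ is locally free, $i$ is a subbundle inclusion, so $i_1$ and $i_2$ have no common zeros and are not both zero; note that $\deg(L^{-1}\otimes J)=\deg(L^{-1}\otimes J^{-1})=-\deg L$.

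For necessity I would argue by ruling out $\deg L\ge 0$. If $\deg L>0$, then both $L^{-1}\otimes J$ and $L^{-1}\otimes J^{-1}$ have negative degree and hence admit no nonzero holomorphic sections, forcing $i_1=i_2=0$ and contradicting the injectivity of $i$. If $\deg L=0$, then any nonzero section of a degree-zero line bundle is nowhere vanishing and trivializes that bundle; consequently the nonzero components among $i_1,i_2$ are nowhere-vanishing flat sections, and the induced section $f=[\,i_1:i_2\,]$ of $\mathbb{P}(J\oplus J^{-1})$ is constant in the flat unitary trivializations. This makes $f$ locally flat, contradicting strict polystability. Hence $\deg L<0$.

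For rigidity I fix $\deg L<0$ together with a sequence $0\to L\to J\oplus J^{-1}\to L^{-1}\to 0$. Because $L^{-1}$ is locally free, $i(L)$ is a saturated line subbundle of degree $\deg L<0$. Suppose $f$ were locally flat; then in the flat trivializations of $P=\mathbb{P}(J\oplus J^{-1})$ used in Lemma \ref{lemma:divisor} the local expressions $w_\alpha=s_{1,\alpha}/s_{2,\alpha}$ are constant on some chart, and the unitary relation $w_\alpha=a_{\alpha\beta}^2\,w_\beta$ together with the connectedness of $X$ propagates this to every chart. Thus $i(L)$ is a parallel subbundle for the flat unitary connection on $J\oplus J^{-1}$, hence itself flat unitary and of degree $0$, contradicting $\deg L<0$. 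Therefore $f$ is non-locally-flat and the sequence is strictly polystable.

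The main obstacle, and the only point requiring genuine care, is the precise equivalence between $f$ being locally flat and the image $i(L)$ being a degree-zero parallel line subbundle; this underlies both the $\deg L=0$ borderline case in the necessity part and the whole of the rigidity part. The two facts to pin down are that a projective holomorphic section which is constant in one flat chart is constant in all of them (from the constancy of the transition functions $a_{\alpha\beta}\in\mathrm{U}(1)$ and the connectedness of $X$), and that a parallel line subbundle of the polystable degree-zero bundle $J\oplus J^{-1}$ necessarily has degree zero. As an alternative, self-contained phrasing one may detect local flatness through the vanishing of the second fundamental form $\beta^{*}$ computed in Lemma \ref{lemma:divisor}, where $\beta^{*}(\hat l_\alpha)=\mathrm{d}w_\alpha\otimes p(e_\alpha)$; once these points are settled, the degree bookkeeping in both parts is routine.
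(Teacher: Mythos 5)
Your necessity argument ($\deg L>0$ kills both components $i_1,i_2$; $\deg L=0$ forces each nonzero component to be nowhere vanishing and the projective section to be constant in the flat charts) is essentially the paper's, just organized without the explicit case split on whether $J$ is a $2$-torsion. The rigidity part, however, takes a genuinely different route. The paper invokes \cite[Lemma 3.2]{LSX:2021} to place the ramification divisor $B_f$ of the induced section in the complete linear system $|L^{-2}\otimes K_X|$, observes that $\deg B_f=2g_X-2-2\deg L>0$ already forces $f$ to be non-locally-flat, and then treats the single borderline case $\deg B_f=0$ (which pins down $g_X=0$, $L\cong\mathcal{O}_{\mathbb{P}^1}(-1)$, $J\cong\mathcal{O}_{\mathbb{P}^1}$) by an explicit argument with sections of $\mathcal{O}_{\mathbb{P}^1}(1)$. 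You instead argue by contradiction: a locally flat section would exhibit $i(L)$ as a parallel line subbundle of the flat unitary bundle $J\oplus J^{-1}$, hence a flat line bundle of degree zero, contradicting $\deg L<0$. Your version is uniform in $g_X$, avoids the external lemma and the case analysis, at the price of the two supporting facts you correctly single out --- propagation of constancy through constant unitary transition functions on a connected surface (identity theorem plus connectedness of the nerve of the cover), and the vanishing of the degree of a parallel line subbundle (its transition data $c_\alpha=a_{\alpha\beta}^2c_\beta$ are unitary constants). Both facts are standard and your sketch of them is adequate, so the proposal is correct; the alternative detection of local flatness via the vanishing of $\beta^*$ that you mention would tie it back to the paper's Lemma \ref{lemma:divisor} but is not needed.
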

\begin{proof} This lemma comprises two statements, both of which we shall prove sequentially.

Let $J$ be a line bundle of degree zero on $X$, i.e. $J \in \operatorname{Pic}^{0}(X)$ such that there exists an embedding $L \hookrightarrow J \oplus J^{-1}$, and ensure that the induced section $f \colon X \to P:=  \mathbb{P}(J \oplus J^{-1})$ is not locally flat. Observe that the embedding $L \hookrightarrow J \oplus J^{-1}$ is equivalent to a pair $(s_1,\,s_2)$ of two sections $s_1,\, s_2$ lying in $H^0 \big(X, \operatorname{Hom}(L,\,J)\big)$, $H^0\big(X, \operatorname{Hom}(L,\,J^{-1})\big)$ respectively such that these two sections possess no common zero. Consequently, $\deg L \leq 0$; otherwise, $\deg \operatorname{Hom}(L,J) < 0$ and $\deg \operatorname{Hom}(L,J^{-1}) < 0$ would imply $s_{1}=0=s_{2}$. If $\deg L = 0$, then $\deg \operatorname{Hom}(L,J) = 0 = \deg \operatorname{Hom}(L,J^{-1})$. This implies that $s_j \, (j=1,2)$ either vanishes nowhere or vanishes identically on $X$. Without loss of generality, we assume that $s_1$ vanishes nowhere. Consequently, $L\cong J$.
\begin{itemize}
\item If $J$ is a 2-torsion line bundle, i.e. $J \cong J^{-1}$, then $s_1$ and $s_2$ are co-linear. In this case, the section $f$ induced by the embedding $(s_1,\, s_2) \colon L \to J\oplus J^{-1}$ is locally constant.
\item If $J$ is not a 2-torsion line bundle, then $L\cong J \not\cong J^{-1}$ and $s_2$ vanishes identically. Hence the embedding $(s_1,0) \colon L \to J \oplus J^{-1}$ gives the locally constant section
\[ f \colon X \to\mathbb{P}(J\oplus J^{-1}),\quad f \equiv[1:\,0]. \]
\end{itemize}
In both cases, the induced section $f$ is consistently locally flat, which contradicts the assumption that $f$ is not locally flat. Therefore, $\deg L$ cannot be zero. In summary, if there exists a strictly polystable extension of $L^{-1}$ by $L$, then $\deg L < 0$.

For the second assertion, by using \cite[Lemma 3.2.]{LSX:2021}, we know  that the section $f \colon X\to P:={\mathbb P}(J\oplus J^{-1})$
has ramification divisor $B_f$ lying in the complete linear system $|L^{-2}\otimes K_X|$.
\begin{itemize}

\item If $\deg\, B_f=2g_X-2-2\deg\, L > 0$, then $f$ is not locally flat.

\item If $\deg\, B_f=0$, then $g_X=0$, $L \cong \mathcal{O}_{\mathbb{P}^{1}}(-1)$ and
$J$ is a trivial line bundle. Then, the sections $s_{1}, s_{2} \in H^{0}\big(\mathbb{P}^{1}, \mathcal{O}_{\mathbb{P}^{1}}(1)\big)$ possess no common zero. Hence, $s_{1}$ and $s_{2}$ cannot vanish identically and be co-linear. In other words, the induced section $f \colon \mathbb{P}^1\to \mathbb{P}\big({\mathcal{O}_{\mathbb{P}^{1}}} \oplus {\mathcal{O}_{\mathbb{P}^{1}}}\big)=\mathbb{P}^1\times \mathbb{P}^1$ is not locally flat.         \qedhere

\end{itemize}
\end{proof}

By virtue of this lemma, we can legitimately refine Definition \ref{defn:sp1} succinctly as follows.

\begin{defn}
\label{defn:sp2}
Let $X$ be a compact Riemann surface of genus $g_X \geq 0$ and $L$ a line bundle of $\deg\,L<0$.
We call the short exact sequence
\[0\to L\to J\oplus J^{-1}\to L^{-1}\to 0,\quad J\in {\rm Pic}^0(X),\]
 {\it strictly polystable}.  An element in ${\rm Ext}^1_{X}(L^{-1},\,L)$ is called {\it strictly polystable} if it could be represented by a strictly polystable extension of $L^{-1}$ by $L$.
\end{defn}

\begin{ques}
\label{ques:sp}
Let $X$ be a compact Riemann surface of genus $g_X \geq 0$ and $L$ a line bundle with $\deg\,L<0$.
\begin{enumerate}
\item Determine the necessary and sufficient condition for the existence of a strictly polystable extension of $L^{-1}$ by $L$.
\item Characterize the algebraic subset consisting of all strictly polystable extensions in the complex vector space
$\operatorname{Ext}^1_{X}(L^{-1},\, L)$.
\end{enumerate}
\end{ques}

\begin{exmp}
{\rm  We shall initially address this question on $\mathbb{P}^1$. }

\begin{enumerate}

\item There exists a strictly polystable extension
if and only if $-d:=\deg\, L<0$.
\item The strictly polystable extensions of $L^{-1}$ by $L$ form a Zariski open subset $\big(\operatorname{Ext}^1_{\mathbb{P}^{1}}(L^{-1},\, L)\big)^{\rm sp}$ of $\operatorname{Ext}^1_{\mathbb{P}^{1}}(L^{-1},\, L)$.
Moreover, the projectivization ${\mathbb P}\left(\big(\operatorname{Ext}_{\mathbb{P}^{1}}^1(L^{-1},\, L)\big)^{\rm sp}\right)$ corresponds to the space of all the rational functions of degree $d$ moduli the post-compositions of M\" obius transformations, which is a Zariski open subset in the Grassmannian ${\rm Gr}_2({\mathbb C}^{d+1})$.
In particular, ${\mathbb P}\left(\big(\operatorname{Ext}_{\mathbb{P}^1}^1(L^{-1},\, L)\big)^{\rm sp}\right)$ has dimension $2d-2$.
\end{enumerate}
\begin{proof} We prove the two statements sequentially.
\begin{enumerate}
\item Recall that $H^0\big(\mathbb{P}^{1}, {\mathcal O}(d)\big)$ coincides with the space of degree $d$ homogeneous polynomials in $z_0$ and $z_1$, where $[z_0:z_1]$ is the homogeneous coordinate on ${\mathbb P}^1$ (\cite[pp. 175-177]{Kod:2005}). Since $-d=\deg\, L<0$, any two homogeneous polynomials $f(z_0,,z_1)$ and $g(z_0,,z_1)$ of degree $d$ that have no common zero provide an embedding of $L$ into ${\mathcal O}\oplus {\mathcal O}$ and induce a section $f \colon {\mathbb P}^1\to {\mathbb P}^1\times {\mathbb P}^1$ corresponding to the rational function $f/g$ on ${\mathbb P}^1$ of degree $d$.

\item It follows from (1) since we could identify ${\mathbb C}^{d+1}$ with the space of degree $d$ homogeneous polynomials in $z_0$ and $z_1$.  \qedhere
\end{enumerate}
\end{proof}
\end{exmp}

Then, we give a comprehensive solution for elliptic curves as follows, providing a partial answer to Question \ref{ques:sp}.
Initially, we establish a lemma applicable to general Riemann surfaces.
\begin{lem}
\label{lemma:ample}
Let $L$ and $J$ be two line bundles over a compact Riemann surface $X$ of $g_X \geq 0$
such that $\deg\, L\leq -2g_X$ and $\deg\, J=0$. Then
the rank two vector bundle $L^{-1}\otimes (J\oplus J^{-1})$ is ample in the sense of Atiyah (\cite[p.417]{Atiyah:1957elliptic}). In particular, there is an embedding $L\hookrightarrow J\oplus J^{-1}$.
\end{lem}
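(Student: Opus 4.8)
The plan is to reduce the statement to the two line-bundle summands and then to extract the embedding from global generation. First I would write
$L^{-1}\otimes(J\oplus J^{-1}) = (L^{-1}\otimes J)\oplus(L^{-1}\otimes J^{-1})$ and record the degree computation: since $\deg J = 0$ and $\deg L \le -2g_X$, both summands are line bundles of degree $-\deg L \ge 2g_X$. Thus the problem is governed entirely by line bundles of degree at least $2g_X$.

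Next I would invoke the standard positivity of such line bundles. A line bundle $N$ with $\deg N \ge 2g_X$ satisfies $H^1(X,N)=0$ (Serre duality, as $\deg N > 2g_X-2$) and is base-point-free: from $0\to N(-p)\to N\to N|_p\to 0$ together with $H^1\big(X,N(-p)\big)=0$ (valid because $\deg N(-p)\ge 2g_X-1$), the evaluation $H^0(X,N)\to N|_p$ is onto for every $p\in X$. Applying this to each summand shows that $E:=L^{-1}\otimes(J\oplus J^{-1})$ is globally generated and has $H^1(X,E)=0$. I would then match this with the formulation of ampleness recalled on the cited page of \cite{Atiyah:1957elliptic}; for a direct sum of line bundles the condition reduces to a condition on the degrees of the summands, which holds here since each equals $-\deg L\ge 2g_X$. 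This yields the first assertion.

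For the \emph{in particular} clause I would produce a nowhere-vanishing global section of $E=\operatorname{Hom}\big(L,\, J\oplus J^{-1}\big)$, since a fiberwise-nonzero such section is exactly a sub-bundle embedding $L\hookrightarrow J\oplus J^{-1}$. Because $E$ is globally generated of rank $2$ on the curve $X$ and $2>\dim X=1$, a general section vanishes nowhere. Concretely, I would pick a general $s_1\in H^0(X, L^{-1}\otimes J)$, whose zero locus is a finite set of points, and then use base-point-freeness of $L^{-1}\otimes J^{-1}$ to choose $s_2\in H^0(X, L^{-1}\otimes J^{-1})$ avoiding those finitely many points; the pair $(s_1,s_2)$ then has no common zero and defines the embedding. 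Equivalently, one counts dimensions in the incidence variety $\{(x,s): s(x)=0\}\subset X\times H^0(X,E)$, whose image in $H^0(X,E)$ has dimension at most $1+h^0(X,E)-2<h^0(X,E)$.

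The degree bookkeeping and the base-point-free lemma are routine; the point requiring care is matching the global-generation-plus-vanishing conclusion with the precise notion of ampleness on \cite[p.417]{Atiyah:1957elliptic}, and, if one insists on the borderline genus-zero case $\deg L=0$ where the summands may be trivial, checking that they still meet that formulation. For $g_X\ge 1$---in particular for the elliptic curves to which this lemma is subsequently applied---each summand has strictly positive degree $\ge 2g_X$, so no such subtlety arises and the general-section argument applies verbatim.
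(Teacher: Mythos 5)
Your proof is correct and follows essentially the same route as the paper: decompose $L^{-1}\otimes(J\oplus J^{-1})$ into the two line-bundle summands of degree $-\deg L\ge 2g_X$, establish Atiyah-ampleness (i.e.\ $H^1=0$ plus global generation) summand by summand, and extract the embedding from a dimension count showing a general section is nowhere vanishing. The only difference is cosmetic: the paper cites Atiyah's Lemma~8 and Lemma~1(i) as black boxes, whereas you reprove their content directly via Serre duality and the point-evaluation sequence, which is a perfectly valid substitution.
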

\begin{proof}
 Since $\deg (L^{-1}\otimes J) = k \geq 2g_{X}$, both $L^{-1}\otimes J$ and $L^{-1}\otimes J^{-1}$ are ample
 in the sense of Atiyah (\cite[p.417]{Atiyah:1957elliptic})
 by Lemma 8 in \cite{Atiyah:1957elliptic}.
By using the splitting exact sequence of vector bundles
\[
0\rightarrow L^{-1}\otimes J\rightarrow L^{-1}\otimes (J\oplus J^{-1})\rightarrow L^{-1}\otimes J^{-1}\rightarrow 0,
\]
and Lemma 1 (i) in \cite{Atiyah:1957elliptic}, we could obtain that $L^{-1}\otimes (J\oplus J^{-1})$ is ample.
In particular, $L^{-1}\otimes (J\oplus J^{-1})$ is globally generated, i.e.
the evaluation map
\[H^0\big(X, L^{-1}\otimes (J\oplus J^{-1})\big)\to \big(L^{-1}\otimes (J\oplus J^{-1})\big)_x,\quad
s\mapsto s(x)\]
is surjective at each point $x\in X$. By counting dimension, the holomorphic sections of $L^{-1}\otimes (J\oplus J^{-1})$  with at least one zero form a proper subvariety of $H^0\big(X, L^{-1}\otimes (J\oplus J^{-1})\big)$.
Therefore, $L^{-1}\otimes (J\oplus J^{-1})$ has a nowhere vanishing section, i.e. ${\mathcal O}_X$ could be embedded into  $L^{-1}\otimes (J\oplus J^{-1})$.
\end{proof}


\begin{thm}
\label{thm:inclusionindirectsum}
Let $X$ be an elliptic curve, $L$ a line bundle of negative degree over $X$ and $J$ a line bundle of degree zero over $X$. Then there exists an embedding of $L$ into $J \oplus J^{-1}$ if either one of the two conditions holds
\begin{itemize}
\item $\deg L \leq -2$;
\item $\deg L = -1$ and $J^{2} \neq \mathcal{O}_{X}$.
\end{itemize}
In particular, there always exists a strictly polystable extension of $L^{-1}$ by $L$.

\end{thm}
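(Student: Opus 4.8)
The plan is to establish the embedding $L \hookrightarrow J \oplus J^{-1}$ under each of the stated numerical conditions, and then invoke Lemma \ref{lem:negative} to upgrade this embedding to a strictly polystable extension. Indeed, once an embedding $L \hookrightarrow J \oplus J^{-1}$ with $\deg L < 0$ and $J \in {\rm Pic}^0(X)$ exists, the second assertion of Lemma \ref{lem:negative} guarantees that the resulting short exact sequence $0 \to L \to J \oplus J^{-1} \to L^{-1} \to 0$ is automatically strictly polystable. So the heart of the matter is purely the existence of the embedding, and the ``in particular'' clause will then follow by exhibiting, for \emph{any} line bundle $L$ of negative degree, at least one $J \in {\rm Pic}^0(X)$ satisfying one of the two conditions.

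For the case $\deg L \leq -2 = -2g_X$ (recall $g_X = 1$ for an elliptic curve), I would apply Lemma \ref{lemma:ample} directly: its hypotheses $\deg L \leq -2g_X$ and $\deg J = 0$ are exactly met, and its conclusion already produces an embedding $L \hookrightarrow J \oplus J^{-1}$ for \emph{every} $J \in {\rm Pic}^0(X)$. This disposes of the first bullet with no further work. The remaining and more delicate case is $\deg L = -1$, where Lemma \ref{lemma:ample} no longer applies since $\deg L = -1 > -2 = -2g_X$. Here an embedding $L \hookrightarrow J \oplus J^{-1}$ is equivalent, as in the proof of Lemma \ref{lem:negative}, to a pair of sections $(s_1, s_2) \in H^0\big({\rm Hom}(L, J)\big) \times H^0\big({\rm Hom}(L, J^{-1})\big)$ with no common zero. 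Now ${\rm Hom}(L, J) = L^{-1} \otimes J$ and ${\rm Hom}(L, J^{-1}) = L^{-1} \otimes J^{-1}$ each have degree $1$. By Riemann--Roch on the elliptic curve, a degree-one line bundle $N$ satisfies $h^0(X, N) = 1$, so each factor admits an (essentially unique) nonzero section $s_1, s_2$, each vanishing at a single point, say at $q_1 \in |L^{-1} \otimes J|$ and $q_2 \in |L^{-1} \otimes J^{-1}|$ respectively. The pair has no common zero precisely when $q_1 \neq q_2$; the zero divisors coincide exactly when $L^{-1} \otimes J \cong L^{-1} \otimes J^{-1}$, i.e.\ when $J^2 \cong \mathcal{O}_X$. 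Thus the hypothesis $J^2 \neq \mathcal{O}_X$ forces $q_1 \neq q_2$ and yields the desired common-zero-free pair.

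The main obstacle I anticipate is the $\deg L = -1$ case, specifically making the ``no common zero $\iff J^2 \neq \mathcal{O}_X$'' step fully rigorous: one must verify that a degree-one line bundle on an elliptic curve has a one-dimensional space of global sections with a \emph{single} simple zero, and then identify the zero locus of $s_j$ with the point representing $L^{-1} \otimes J^{\mp 1}$ under the Abel--Jacobi isomorphism ${\rm Pic}^1(X) \cong X$. The key input is that on an elliptic curve two degree-one line bundles are isomorphic if and only if their unique effective divisors are equal, so $q_1 = q_2 \Leftrightarrow L^{-1}\otimes J \cong L^{-1}\otimes J^{-1} \Leftrightarrow J^2 \cong \mathcal{O}_X$. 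Finally, for the ``in particular'' statement: given any $L$ of negative degree, if $\deg L \leq -2$ choose any $J \in {\rm Pic}^0(X)$, and if $\deg L = -1$ choose any non-$2$-torsion $J$ (which exists since ${\rm Pic}^0(X) \cong X$ is infinite while its $2$-torsion is finite); either way an embedding, hence a strictly polystable extension, exists.
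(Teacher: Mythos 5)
Your proposal is correct and follows essentially the same route as the paper: the case $\deg L\leq -2$ is handled by Lemma \ref{lemma:ample}, and the case $\deg L=-1$ by Riemann--Roch together with the observation that the unique sections of $L^{-1}\otimes J$ and $L^{-1}\otimes J^{-1}$ share their single zero only if $J^{2}\cong\mathcal{O}_X$. Your explicit treatment of the ``in particular'' clause via Lemma \ref{lem:negative} is a welcome completeness that the paper leaves implicit, but the mathematical content is the same.
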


\begin{proof}
Let $\deg L = -k < 0$. Since $g_{X}=1$, by Lemma \ref{lemma:ample}, there exists an embedding of $L$ into $J\oplus J^{-1}$ for $\deg L = -k \leq -2$. If $\deg L =-1$, we pick $J\in \rm{Pic}^{0}(X)$ with $J^{2}\neq \mathcal{O}_{X}$. By the Riemann-Roch theorem, we have
\[
h^{0}(L^{-1}\otimes J)-h^{1}(L^{-1}\otimes J)=1-g_{X}+1=1,
\]
where $h^{i}(L^{-1}\otimes J)=\dim_{\mathbb{C}}H^{i}(X, L^{-1}\otimes J)\ {\rm for}\ i=0,1$. Hence $h^{0}(L^{-1}\otimes J)\geq1$. There exist $0\not\equiv s_{1}\in H^{0}(X, L^{-1}\otimes J)$ and $0\not\equiv s_{2}\in H^{0}(X, L^{-1}\otimes J^{-1})$, both of which have a single zero on $X$. Suppose that both $s_1$ and $s_2$ vanish at the same point $x$ on $X$. Then $L^{-1}\otimes J\cong \mathcal{O}(x)\cong L^{-1}\otimes J^{-1}$ and $J\cong J^{-1}$. Contradiction. Hence,  $(s_{1}, s_{2})\in H^{0}\big(X, L^{-1}\otimes (J\oplus J^{-1})\big)$ is a nowhere vanishing section which induces an embedding $L\hookrightarrow J\oplus J^{-1}$.

\end{proof}

\subsection{Strictly polystable extensions and integral reducible metrics}

\begin{defn}
Two cone spherical metrics are called {\it projectively equivalent} if and only if their developing maps have the difference by a post-composition of
some M\" obius transformation.
\end{defn}

\begin{lem} {\rm (\cite[Fact 2.1., p.77]{Umehara-Y:2000})}  Any two projectively equivalent irreducible metrics are isometric.
\end{lem}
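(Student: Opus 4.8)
The plan is to reduce the statement to a group-theoretic fact about the two monodromy groups. Let $\mathrm{d}s_1^2$ and $\mathrm{d}s_2^2$ be two projectively equivalent irreducible cone spherical metrics, with developing maps $f_1$ and $f_2$, so that $f_2=\phi\circ f_1$ for some $\phi\in{\rm PSL}(2,\mathbb{C})$. Writing $\Gamma_i\subset{\rm PSL}(2,\mathbb{C})$ for the monodromy group of $f_i$, property (2) of developing maps forces both $\Gamma_1$ and $\Gamma_2$ to lie in ${\rm PSU(2)}$, and moreover $\Gamma_2=\phi\,\Gamma_1\,\phi^{-1}$. The whole argument hinges on proving that $\phi$ in fact lies in ${\rm PSU(2)}$: once this is known, $\phi$ is an isometry of the Fubini--Study metric ${\rm d}s_0^2$, whence $\mathrm{d}s_2^2=f_2^{\ast}{\rm d}s_0^2=f_1^{\ast}\phi^{\ast}{\rm d}s_0^2=f_1^{\ast}{\rm d}s_0^2=\mathrm{d}s_1^2$, so the two metrics literally coincide and are in particular isometric.

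To prove $\phi\in{\rm PSU(2)}$, I would pass to the standard action of ${\rm PSL}(2,\mathbb{C})$ on hyperbolic $3$-space $\mathbb{H}^3$, under which ${\rm PSU(2)}$ is the stabilizer of a basepoint $o\in\mathbb{H}^3$ and $\partial\mathbb{H}^3=\mathbb{P}^1$. From $\Gamma_1\subset{\rm PSU(2)}={\rm Stab}(o)$ and $\Gamma_1=\phi^{-1}\Gamma_2\,\phi\subset\phi^{-1}{\rm PSU(2)}\,\phi={\rm Stab}(\phi^{-1}o)$, the group $\Gamma_1$ fixes both $o$ and $\phi^{-1}o$. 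If these two points were distinct, $\Gamma_1$ would fix the geodesic joining them, hence fix its two endpoints on $\partial\mathbb{H}^3=\mathbb{P}^1$. But a subgroup of ${\rm PSU(2)}$ fixing two points of $\mathbb{P}^1$ is conjugate in ${\rm PSU(2)}$ into ${\rm U}(1)$, which would make $\mathrm{d}s_1^2$ reducible, contrary to hypothesis. Therefore $o=\phi^{-1}o$, i.e. $\phi\in{\rm Stab}(o)={\rm PSU(2)}$, as required.

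The one point that genuinely uses irreducibility — and the step I expect to require the most care — is the passage from ``$\Gamma_1$ fixes two points of $\mathbb{P}^1$'' to ``$\Gamma_1$ is reducible''. Here I would argue that a nontrivial element of ${\rm PSU(2)}$ fixes exactly one antipodal pair of $\mathbb{P}^1$, so if $\Gamma_1\neq\{{\rm id}\}$ fixes two points they must be that antipodal pair, and $\Gamma_1$ then lies in the corresponding copy of ${\rm U}(1)$; the residual case $\Gamma_1=\{{\rm id}\}$ is excluded since a metric with trivial monodromy is (trivially) reducible. An equivalent, more analytic route avoids $\mathbb{H}^3$: because $\phi$ is a biholomorphism, $\phi^{\ast}{\rm d}s_0^2$ is again a smooth metric of constant curvature $+1$ on $\mathbb{P}^1$, and both ${\rm d}s_0^2$ and $\phi^{\ast}{\rm d}s_0^2$ are $\Gamma_1$-invariant; since the constant-curvature-$+1$ metrics on $\mathbb{P}^1$ are exactly the ${\rm PSL}(2,\mathbb{C})$-translates of ${\rm d}s_0^2$, parametrized by $\mathbb{H}^3={\rm PSL}(2,\mathbb{C})/{\rm PSU(2)}$, the $\Gamma_1$-invariance of two of them again produces two fixed points of $\Gamma_1$ in $\mathbb{H}^3$ and the same dichotomy. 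Either way the crux is that an irreducible monodromy group determines a \emph{unique} invariant round metric, equivalently a unique fixed point in $\mathbb{H}^3$.
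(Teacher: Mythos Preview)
Your proof is correct. Note, however, that the paper does not actually supply a proof of this lemma: it is simply quoted as \cite[Fact 2.1., p.77]{Umehara-Y:2000} and used as a black box. So there is no ``paper's own proof'' to compare against.

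Your argument via the action of ${\rm PSL}(2,\mathbb{C})$ on $\mathbb{H}^3$ is the standard and cleanest way to see this: the key point, which you identify correctly, is that an irreducible subgroup of ${\rm PSU(2)}$ has a \emph{unique} fixed point in $\mathbb{H}^3$ (equivalently, stabilizes a unique round metric on $\mathbb{P}^1$), so the conjugating element $\phi$ must lie in ${\rm PSU(2)}$. Your treatment of the residual cases --- the endpoints of the fixed geodesic being antipodal versus not, and the trivial-monodromy case --- is accurate: a nontrivial element of ${\rm PSU(2)}\cong{\rm SO}(3)$ fixes exactly one antipodal pair on $\mathbb{P}^1$, so a subgroup fixing two boundary points is either trivial or contained in a copy of ${\rm U}(1)$, and in both cases the metric would be reducible. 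The alternative ``analytic'' formulation you sketch (two $\Gamma_1$-invariant constant-curvature-$+1$ metrics on $\mathbb{P}^1$) is really the same argument in different language, since ${\rm PSL}(2,\mathbb{C})/{\rm PSU(2)}\cong\mathbb{H}^3$ parametrizes exactly those metrics.
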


Utilizing the surjective map $\sigma$ in (\ref{equ:surmap}), we can derive a cone spherical metric that represents an effective divisor through a polystable extension of two line bundles. When we restrict the domain of $\sigma$ to the space ${\rm Ext}^1(L^{-1},L)^{\rm sp}$, encompassing strictly polystable extensions of $L^{-1}$ by $L$, we obtain reducible cone spherical metrics. This leads to the potential formation of a canonical map
\[
\bar{\sigma}\colon{\mathbb P}\bigl({\rm Ext}^1(L^{-1},\,L)^{\rm sp}\bigr)\rightarrow \mathcal{MR}(X,\,{\mathbb Z})
\]
where ${\mathbb P}\bigl(H^1(X,\, L^2)^{\rm sp}\bigr)={\mathbb P}\bigl({\rm Ext}^1(L^{-1},\,L)^{\rm sp}\bigr)$ signifies the projectivization of the space ${\rm Ext}^1(L^{-1},\,L)^{\rm sp}$. Here, $\mathcal{MR}(X,\,{\mathbb Z})$ denotes the space of projectively equivalent classes of reducible metrics that represent effective divisors on $X$. It is crucial to validate the well-defined nature of this map, particularly its independence from the choice of representations in an element of ${\mathbb P}\bigl({\rm Ext}^1(L^{-1},L)^{\rm sp}\bigr)$. The substantiation of this aspect will be addressed in Lemma \ref{lem:well-defined}. Additionally, we aim to demonstrate the injectivity of the map $\bar{\sigma}$.
In conclusion, this leads us to the formulation of the following proposition.

\begin{prop}
\label{prop:red}
Let $L$ be a line bundle on a compact Riemann surface $X$ with $\deg\, L<0$ such that there exists a strictly polystable extension of $L^{-1}$ by $L$.
Subsequently, we introduce a canonical mapping
\[
\bar{\sigma}\colon{\mathbb P}\bigl({\rm Ext}^1(L^{-1},\,L)^{\rm sp}\bigr)\rightarrow \mathcal{MR}(X,\,{\mathbb Z})
\]
where  ${\mathbb P}\bigl({\rm Ext}^1(L^{-1},\,L)^{\rm sp}\bigr)$ represents the projectivization of the space ${\rm Ext}^1(L^{-1},\,L)^{\rm sp}$ of strictly polystable extensions of $L^{-1}$ by $L$. Then this map is well-defined and injective.
\end{prop}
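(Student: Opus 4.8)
The plan is to prove well-definedness and injectivity separately, the injectivity argument running parallel to Lemma \ref{lem:inj} but adapted to the decomposable middle term $J\oplus J^{-1}$ and to projective (rather than isometric) equivalence.

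For well-definedness, I would start from the fact that $\sigma$ in \eqref{equ:surmap} is already well-defined on isomorphism classes of polystable extensions, and that, by Definition \ref{defn:sp2} together with the computation in the proof of Lemma \ref{lemma:divisor}, a strictly polystable extension $0\to L\xrightarrow{i} J\oplus J^{-1}\xrightarrow{p} L^{-1}\to 0$ yields a reducible metric, since the ${\rm U}(1)$-valued flat transition functions of $J\oplus J^{-1}$ force the developing map $w=s_{1}/s_{2}$ to have monodromy in ${\rm U}(1)$. Two ambiguities must then be absorbed into the target. First, the Hermitian--Einstein metric is unique only up to the scalings $ah\oplus bh^{*}$ of Lemma \ref{lem:well_defn_1}, and such a change moves the developing map by the diagonal M\"obius transformation $w\mapsto (a/b)\,w$, i.e.\ within its projective-equivalence class; this is precisely why the target is $\mathcal{MR}(X,\mathbb{Z})$. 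Second, to descend to the projectivization I must check that scaling the class by $\lambda\in\mathbb{C}^{*}$ does not change the image: realizing $\lambda\mathbb{E}$ by the embedding $i/\lambda$ as before Proposition \ref{prop:rdm}, the image sub-line-bundle $(i/\lambda)(L)=i(L)$ is unchanged, so the induced section of $\mathbb{P}(J\oplus J^{-1})$ and the resulting metric are literally the same. Together with Lemma \ref{lem:well_defn_2}, this shows that $\bar{\sigma}$ is well-defined on $\mathbb{P}\big({\rm Ext}^{1}(L^{-1},L)^{\rm sp}\big)$.

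For injectivity, suppose
\[
\mathbb{E}_{k}\colon\ 0\to L\xrightarrow{i_{k}} J_{k}\oplus J_{k}^{-1}\xrightarrow{p_{k}} L^{-1}\to 0,\qquad k=1,2,
\]
satisfy $\bar{\sigma}([\mathbb{E}_{1}])=\bar{\sigma}([\mathbb{E}_{2}])$, so their developing maps obey $f_{2}=\phi\circ f_{1}$ for some $\phi\in{\rm PSL}(2,\mathbb{C})$. By the correspondence of \cite[Theorem 3.1]{LSX:2021} and since $\phi$ conjugates the monodromy of $f_{1}$ to that of $f_{2}$, the associated projective unitary flat bundles are isomorphic; identifying them as $P:=\mathbb{P}(J_{1}\oplus J_{1}^{-1})=\mathbb{P}(J_{2}\oplus J_{2}^{-1})$, the section $f_{2}$ becomes the image of $f_{1}$ under the automorphism of $P$ induced by $\phi$. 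Exactly as in Lemma \ref{lem:inj}, there is a line bundle $T$ with $J_{2}\oplus J_{2}^{-1}\cong (J_{1}\oplus J_{1}^{-1})\otimes T$, and $T$ is $2$-torsion since both determinants equal $\mathcal{O}_{X}$. By uniqueness of the decomposition of a polystable rank-two bundle into line bundles, $\{J_{2},J_{2}^{-1}\}=\{J_{1}\otimes T, J_{1}^{-1}\otimes T\}$; the relabeling $J\leftrightarrow J^{-1}$ only swaps the two factors of the same bundle and replaces $w$ by $1/w$, so it is absorbed into the projective equivalence.

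It then remains to force $T\cong\mathcal{O}_{X}$. After applying the automorphism of $P$ induced by $\phi$, the two developing sections coincide, so the common section determines sub-line-bundles $\mathcal{L}\subset J_{1}\oplus J_{1}^{-1}$ and $\mathcal{L}\otimes T\subset (J_{1}\oplus J_{1}^{-1})\otimes T$, both isomorphic to $L$; hence $L\cong L\otimes T$ and $T\cong\mathcal{O}_{X}$, as in Lemma \ref{lem:inj}. Consequently $i_{1}(L)=i_{2}(L)$, the extensions $\mathbb{E}_{1},\mathbb{E}_{2}$ are co-linear in ${\rm Ext}^{1}(L^{-1},L)$, and $[\mathbb{E}_{1}]=[\mathbb{E}_{2}]$. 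I expect the main obstacle to be the step asserting that $\phi$ induces an automorphism of $P$ carrying $f_{1}$ to $f_{2}$ and matching the two decompositions through a single $2$-torsion twist: one must control $\phi$, which need not lie in ${\rm U}(1)$, by distinguishing the nontrivial-monodromy case, where $\phi$ normalizes the diagonal torus and the two flat sections of $P$ (the images of $J_{k}$ and $J_{k}^{-1}$) are intrinsic to $P$, from the trivial-monodromy case, where $J_{k}$ is $2$-torsion, $P=X\times\mathbb{P}^{1}$, and the statement reduces to the rational-function description on $\mathbb{P}^{1}$ recalled earlier.
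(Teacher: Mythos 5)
Your injectivity argument is essentially sound and is a legitimately different organization from the paper's: you adapt the twist-by-$T$ argument of Lemma \ref{lem:inj} to the decomposable middle term, whereas the paper works directly with the developing maps, solves the conjugation equations to force $\phi$ into the normalizer of the diagonal torus (or treats the trivial-monodromy case via rational functions), and then exhibits an explicit automorphism of $J\oplus J^{-1}$ carrying one embedding to the other. Your route buys some conceptual economy but must separately justify two points you only gesture at: that conjugate ${\rm U}(1)$-monodromies yield isomorphic flat bundles (true, since the normalizer of the torus acts on it by identity or inversion), and that the final conclusion is co-linearity of the extension classes rather than literal equality of the images $i_1(L)=i_2(L)$ --- the two images are related by the lifted automorphism $\tilde\phi$, which is generally non-scalar here because $J\oplus J^{-1}$ is not simple; co-linearity still follows because $\tilde\phi$ realizes an equivalence of the two short exact sequences.

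The genuine gap is in well-definedness. You attribute the ambiguity to the choice of Hermitian--Einstein metric, but by Lemma \ref{lem:well_defn_1} a different Hermitian--Einstein metric gives the \emph{same} Chern connection, and in any case the cone metric is constructed from the section $w_\alpha=s_{1,\alpha}/s_{2,\alpha}$ of the flat ${\mathbb P}^1$-bundle, into which no metric on $J\oplus J^{-1}$ enters. The ambiguity that actually needs to be absorbed is this: two strictly polystable short exact sequences representing the same point of ${\mathbb P}\bigl({\rm Ext}^1(L^{-1},L)^{\rm sp}\bigr)$ have embeddings differing by an element of ${\rm Aut}_X(J\oplus J^{-1})$ (composed with a scalar). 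When $J^2\neq{\mathcal O}_X$ this group is the diagonal torus and $w$ changes by $w\mapsto(\lambda/\mu)w$, which is the phenomenon you describe; but when $J^2={\mathcal O}_X$ the automorphism group is all of ${\rm GL}(2,{\mathbb C})$, and $w$ changes by an arbitrary constant M\"obius transformation $\frac{aw+b}{cw+d}$, producing a metric that is projectively equivalent but in general \emph{not} isometric to the original. This $2$-torsion case is exactly Case 2 of the paper's well-definedness proof and is the real reason the target must be $\mathcal{MR}(X,{\mathbb Z})$; your argument as written does not cover it, even though you invoke the analogous dichotomy later for injectivity.
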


So this proposition shows that the projectivization of a strictly polystable extension class corresponds to a projectively equivalent class of reducible metrics. In the subsequent discussion, we will divide the proof of this proposition into the following two lemmas.

\begin{lem}
\label{lem:well-defined}
The map $\bar{\sigma}$ is well-defined.
\end{lem}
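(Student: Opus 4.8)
The plan is to show that the map $\bar{\sigma}$ does not depend on the choice of representative of a projective class $[\mathbb{E}] \in {\mathbb P}\bigl({\rm Ext}^1(L^{-1},\,L)^{\rm sp}\bigr)$. Concretely, two representatives of the same projective point differ by scaling the extension class by some $\lambda \in {\mathbb C}^*$, so the core claim is that $\mathbb{E}$ and $\lambda\mathbb{E}$ produce reducible metrics that lie in the same projective equivalence class in $\mathcal{MR}(X,{\mathbb Z})$, i.e. their developing maps differ by post-composition with a single M\"obius transformation. First I would fix a strictly polystable sequence $\mathbb{E}\colon 0 \to L \stackrel{i}{\to} J \oplus J^{-1} \stackrel{p}{\to} L^{-1} \to 0$ and recall from the discussion preceding \eqref{equ:surmap} (following \cite[Theorem 3.1.]{LSX:2021}) that the induced metric is built from the developing section $w = \{U_\alpha, w_\alpha\}$ constructed exactly as in the proof of Lemma \ref{lemma:divisor}, where $w_\alpha = s_{1,\alpha}/s_{2,\alpha}$ in the flat frame $(e_\alpha, e^*_\alpha)$ of $J \oplus J^{-1}$.

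The key step is to track how scaling the embedding affects this developing map. Following the identification ${\rm Ext}^1(L^{-1},L) \cong H^1(X, L^2)$, replacing $\mathbb{E}$ by $\lambda\mathbb{E}$ corresponds (up to isomorphism of extensions) to precomposing the inclusion $i$ with multiplication by $1/\lambda$, exactly as in the $\mathbb{C}^*$-action $\mathbb{\lambda E}: 0 \to L \xrightarrow{\iota/\lambda} E \to M \to 0$ used to define ${\mathfrak R}_{(L,M)}$ in Proposition \ref{prop:rdm}. I would argue that this rescaling multiplies the pair $(s_{1,\alpha}, s_{2,\alpha})$ by a global constant, hence leaves the ratio $w_\alpha = s_{1,\alpha}/s_{2,\alpha}$ unchanged; thus the developing map is literally unaffected and the metrics coincide. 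More care is needed because a genuine representative change is not merely rescaling $i$ but an isomorphism of extensions: two sequences representing the same class in ${\rm Ext}^1(L^{-1},L)$ fit into a commutative diagram with a bundle automorphism $g$ of $J \oplus J^{-1}$ fixing $L$ and $L^{-1}$. Because $J \oplus J^{-1}$ is strictly polystable with $J \not\cong J^{-1}$ generic, such an automorphism is diagonal, i.e. $g = \operatorname{diag}(c, c')$ with $c, c' \in {\mathbb C}^*$; this sends $w_\alpha \mapsto (c/c') w_\alpha$, a constant scaling, which is precisely a M\"obius transformation $z \mapsto (c/c')z$ of the target ${\mathbb P}^1$ applied uniformly. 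Hence the resulting developing maps differ by post-composition with one M\"obius transformation, so the two metrics are projectively equivalent and define the same point of $\mathcal{MR}(X,{\mathbb Z})$.

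To assemble the argument I would proceed as follows. First, reduce the well-definedness to the two operations that can relate representatives of a fixed projective extension class: scaling the class by $\lambda \in {\mathbb C}^*$, and applying an isomorphism of extensions. Second, handle the scaling using the $\mathbb{C}^*$-action description already recorded before Proposition \ref{prop:rdm}, showing the developing ratio $w_\alpha$ is invariant. Third, classify the relevant bundle automorphisms of $J \oplus J^{-1}$ compatible with the sub/quotient structure (treating the case $J \cong J^{-1}$, i.e. $J$ two-torsion, separately, where off-diagonal automorphisms can occur), and verify that each induces a post-composition by a M\"obius transformation on $w$. Fourth, conclude that all representatives yield projectively equivalent reducible metrics.

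The main obstacle I anticipate is the third step: controlling the automorphism group of the extension when $J$ is two-torsion, where $J \oplus J^{-1} = J \oplus J$ admits the full $\operatorname{GL}(2,{\mathbb C})$ of constant automorphisms rather than just the diagonal torus. In that degenerate case one must check that even a non-diagonal automorphism fixing the line $L$ (setwise) still acts on the developing map by a global M\"obius transformation in ${\rm PSU}(2)$ — which should follow because the flat unitary structure forces admissible automorphisms to be unitary, hence to act as genuine M\"obius transformations of the Fubini--Study target. Verifying that the relevant constants indeed land in ${\rm PSU}(2)$ (so that the transformed $w$ still defines a developing map of a \emph{reducible} metric representing the same divisor) is the delicate point, and I would lean on the $\operatorname{U}(1)$-valued transition functions $a_{\alpha\beta}$ established in the proof of Lemma \ref{lemma:divisor} to pin this down.
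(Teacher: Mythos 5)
Your proposal follows essentially the same route as the paper: reduce well-definedness to the action of automorphisms of $J\oplus J^{-1}$ on the embedding, split into the cases $J^2\neq\mathcal{O}_X$ (diagonal automorphisms, sending $w_\alpha$ to $(\lambda/\mu)w_\alpha$) and $J^2=\mathcal{O}_X$ (full $\mathrm{GL}(2,\mathbb{C})$, sending $w_\alpha$ to a M\"obius image), and conclude projective equivalence. One small remark: your worry about the M\"obius transformation landing in $\mathrm{PSU}(2)$ is unnecessary, since projective equivalence as defined in the paper only requires post-composition by \emph{some} M\"obius transformation, and the transformed section still yields a genuine cone spherical metric because the $\mathrm{U}(1)$-valued transition functions of the bundle are untouched.
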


\begin{proof}

We consider an embedding $L\stackrel{i} \hookrightarrow J\oplus J^{-1}$, where $J$ denotes a holomorphic line bundle over $X$ with $\deg J=0$.

{\it Case 1.} Assume that $J$ is not a 2-torsion, i.e., $J^{2}\neq{\mathcal O}_X$. Then the automorphism group of the rank two holomorphic vector bundle 
$J\oplus J^{-1}\to X$ can be expressed as
\[
{\rm Aut}_{X}(J\oplus J^{-1})=
\left\{\phi=\begin{pmatrix} \lambda & 0 \\ 0 & \mu \end{pmatrix}\in \operatorname{Hom}(J\oplus J^{-1},J\oplus J^{-1}): \lambda,\mu\in \mathbb{C}^* \right\}.
\]
Thus, due to the polystability and flatness of $J\oplus J^{-1}$, there exists an open cover $\{U_{\alpha}\}$ of $X$ and constant transition functions on $U_{\alpha\beta}=U_{\alpha}\cap U_{\beta}$ given by
$$
\displaystyle{g_{\alpha\beta}=\begin{pmatrix} a_{\alpha\beta} & 0 \\ 0 & a_{\alpha\beta}^{-1} \end{pmatrix}},
$$
where  $a_{\alpha\beta}\in\rm{U}(1)$.
Let $(e_{\alpha},e^{*}_{\alpha})$ be a holomorphic frame of $(J\oplus J^{-1})|_{U_{\alpha}}$ relative to $\{U_{\alpha\beta},g_{\alpha\beta}\}$, i.e.
$$\begin{pmatrix}
  {e}_{\beta} \\
  {e}^{*}_{\beta}
\end{pmatrix} = \begin{pmatrix}
  a_{\alpha\beta} & 0 \\
  0 & a^{-1}_{\alpha\beta}
\end{pmatrix} \begin{pmatrix}
 {e}_{\alpha} \\
  {e}^{*}_{\alpha}
\end{pmatrix}.$$
Since $L$ is a holomorphic line subbundle of $J\oplus J^{-1}$, we choose a holomorphic frame $l_{\alpha}$ of $L$ on $U_{\alpha}$, and denote the transition functions as $\ell_{\alpha\beta}$. This means that $\ell_{\alpha\beta}\colon U_{\alpha}\cap U_{\beta}\rightarrow GL(1;\mathbb{C})$ are holomorphic maps and $l_{\beta}=\ell_{\alpha\beta}l_{\alpha}$ holds.  Since $L\stackrel{i} \hookrightarrow J\oplus J^{-1}$ is a holomorphic embedding,  $i(l_{\alpha})$ is a nowhere vanishing section of $J\oplus J^{-1}$ on $U_{\alpha}$. We set $i(l_{\alpha})=s_{1,\alpha}e_{\alpha}+s_{2,\alpha}e^{*}_{\alpha}$, where $s_{1,\alpha}$ and $s_{2,\alpha}$ have no common zeros. On $U_{\alpha}\cap U_{\beta}$, we have $$s_{1,\alpha}=\ell^{-1}_{\alpha\beta}a_{\alpha\beta}\cdot s_{1,\beta}, \quad s_{2,\alpha}=\ell^{-1}_{\alpha\beta}a^{-1}_{\alpha\beta}\cdot s_{2,\beta}.$$ Thus, $\left\{{U_{\alpha},w_{\alpha}:=\frac{s_{1,\alpha}}{s_{2,\alpha}}}\right\}$ defines a global holomorphic section of $\mathbb{P}(J\oplus J^{-1})$ denoted by $w:X\rightarrow \mathbb{P}(J\oplus J^{-1})$, and $w_{\alpha}=a^{2}_{\alpha\beta}\cdot w_{\beta}$. It is worth noting that  $a_{\alpha\beta}\in U(1)$. 
Then we obtain from ${\rm d}s^{2}_{\alpha}:=w^{*}_{\alpha}({\rm d}s_{0}^2)$ on $U_\alpha$ a cone spherical metric on $X$, denoted by ${\rm d}s^2$.
We now turn to another embedding $L\stackrel{\phi\circ i} \hookrightarrow J\oplus J^{-1}$, where $\phi\circ i(l_{\alpha})=\phi(s_{1,\alpha}e_{\alpha}+s_{2,\alpha}e^{*}_{\alpha})=\lambda s_{1,\alpha}e_{\alpha}+\mu s_{2,\alpha}e^{*}_{\alpha}$. Here,
$$
\displaystyle{\left\{{U_{\alpha},\tilde{w}_{\alpha}=\frac{\lambda}{\mu}\cdot w_{\alpha}=\frac{\lambda}{\mu}\cdot\frac{s_{1,\alpha}}{s_{2,\alpha}}}\right\}}
$$
defines a global holomorphic section of $\mathbb{P}(J\oplus J^{-1})$ denoted by $\tilde{w}=\frac{\lambda}{\mu}\cdot w:X\rightarrow \mathbb{P}(J\oplus J^{-1})$. We obtain a spherical reducible metric ${\rm d}\tilde{s}^2$. It is worth noting that ${\rm d}\tilde{s}^2$ and ${\rm d}s^{2}$ are projectively equivalent reducible metrics representing the same divisor.

{\it Case 2.} Suppose that $J$ is 2-torsion, i.e., $J^{2}={\mathcal O}_X$. Then the automorphism group of $J\oplus J^{-1}$ is given by
$$
\displaystyle{{\rm Aut}_{X}(J\oplus J^{-1})=
\left\{\psi=\begin{pmatrix} a & b \\ c & d \end{pmatrix}\in \operatorname{Hom}(J\oplus J^{-1},J\oplus J^{-1}): a,\ b,\ c,\ d\in \mathbb{C},\,ad-bc\not=0 \right\}}.
$$ Since $J=J^{-1}$, we can choose an open cover $\{U_{\alpha}\}$ of $X$ and transitions functions
$
\displaystyle{g_{\alpha\beta}=\begin{pmatrix} a_{\alpha\beta} & 0 \\ 0 & a_{\alpha\beta} \end{pmatrix}}
$
on $U_{\alpha\beta}=U_{\alpha}\cap U_{\beta}$,
where  $a_{\alpha\beta}\in\rm{U}(1)$.
Let $(e_{\alpha},e^{*}_{\alpha})$ be a holomorphic frame of $(J\oplus J^{-1})|_{U_{\alpha}}$ relative to $\{U_{\alpha\beta},g_{\alpha\beta}\}$.
We choose a holomorphic frame $l_{\alpha}$ of $L$ on $U_{\alpha}$. Consider the embedding $L\stackrel{ i} \hookrightarrow J\oplus J^{-1}$ and set $i(l_{\alpha})=s_{1,\alpha}e_{\alpha}+s_{2,\alpha}e^{*}_{\alpha}$. For the embedding $L\stackrel{\psi\circ i} \hookrightarrow J\oplus J^{-1}$, the transformation $\psi$ maps the frame $(e_{\alpha},e^{*}_{\alpha})$ to $(ae_{\alpha}+ce^{*}_{\alpha},be_{\alpha}+de^{*}_{\alpha})$, i.e.,
\[
\psi(e_{\alpha})=ae_{\alpha}+ce^{*}_{\alpha},\quad  \psi(e^{*}_{\alpha})=be_{\alpha}+de^{*}_{\alpha}.
\]
Then, we have
$$
\psi\circ i(l_{\alpha})=\psi(s_{1,\alpha}e_{\alpha}+s_{2,\alpha}e^{*}_{\alpha})
=(as_{1,\alpha}+bs_{2,\alpha})e_{\alpha}+(cs_{1,\alpha}+ds_{2,\alpha})e^{*}_{\alpha}.
$$
Hence,
$$
\displaystyle{\left\{{U_{\alpha\beta},\hat{w}_{\alpha}=\frac{as_{1,\alpha}+bs_{2,\alpha}}{cs_{1,\alpha}+ds_{2,\alpha}}}=\psi(w_{\alpha})\right\}}
$$
defines a global holomorphic section of $\mathbb{P}(J\oplus J^{-1})$ denoted by $\hat{w}=\psi(w)=\frac{aw+b}{cw+d}\colon X\rightarrow \mathbb{P}(J\oplus J^{-1})$. We obtain a spherical reducible metric ${\rm d}\hat{s}^{2}$. The metrics ${\rm d}\hat{s}^{2}$ and ${\rm d}s^{2}$ are projectively equivalent reducible metrics representing the same divisor.
\end{proof}

\begin{lem}
The map $\bar{\sigma}$ is injective.
\end{lem}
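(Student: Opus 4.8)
The plan is to mirror the proof of Lemma \ref{lem:inj}, replacing the stability of the middle term by the section-preserving data supplied by \cite[Theorem 3.1]{LSX:2021}. Suppose two strictly polystable extensions
\[
(\mathbb{E}_1)\colon 0\to L\xrightarrow{i_1} J_1\oplus J_1^{-1}\to L^{-1}\to 0,\qquad
(\mathbb{E}_2)\colon 0\to L\xrightarrow{i_2} J_2\oplus J_2^{-1}\to L^{-1}\to 0
\]
satisfy $\bar{\sigma}([\mathbb{E}_1])=\bar{\sigma}([\mathbb{E}_2])$, so that the reducible metrics ${\rm d}s_1^2$ and ${\rm d}s_2^2$ they define are projectively equivalent; I must deduce that $[\mathbb{E}_1]=[\mathbb{E}_2]$ in ${\mathbb P}\bigl({\rm Ext}^1(L^{-1},L)^{\rm sp}\bigr)$. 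Writing $E_k=J_k\oplus J_k^{-1}$, $P_k={\mathbb P}(E_k)$, and letting $s_k\colon X\to P_k$ be the section induced by $i_k$, the first step is to record that the projective equivalence of the developing maps by a M\"obius transformation $\phi$ conjugates the ${\rm U}(1)$-monodromies, and hence induces a holomorphic isomorphism of the underlying flat ${\mathbb P}^1$-bundles $\Psi\colon P_1\to P_2$ with $\Psi\circ s_1=s_2$; this is precisely the pair-level reformulation of projective equivalence under the correspondence of \cite[Theorem 3.1]{LSX:2021}.

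Given $\Psi$, the second step is standard projective-bundle geometry: the isomorphism $P_1\cong P_2$ yields a line bundle $T$ on $X$ together with an isomorphism $\eta\colon E_1\to E_2\otimes T$ satisfying ${\mathbb P}(\eta)=\Psi$ (\cite[p.515]{GH:1994}). Since $E_k=J_k\oplus J_k^{-1}$ forces $\det E_1=\det E_2=\mathcal{O}_X$, taking determinants gives $T^2\cong\mathcal{O}_X$, so $T$ is a $2$-torsion line bundle. The crucial third step uses the section data: the section $s_k$ corresponds to the sub-line-bundle $i_k(L)\subset E_k$, so the identity $\Psi\circ s_1=s_2$ forces $\eta$ to carry $i_1(L)$ onto $i_2(L)\otimes T$. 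Restricting $\eta$ to this line subbundle yields an isomorphism $L\cong L\otimes T$, whence $T\cong\mathcal{O}_X$; consequently $E_1\cong E_2$ through an isomorphism $\eta$ that preserves the image of $L$.

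The final step is bookkeeping with the extension class. The isomorphism $\eta$ now fits into a commutative ladder between $(\mathbb{E}_1)$ and $(\mathbb{E}_2)$ whose outer vertical arrows are the automorphism $\eta|_{L}=c\in\mathbb{C}^*$ of $L$ and the induced automorphism $c'\in\mathbb{C}^*$ of $L^{-1}$. The compatibility $c_*\,\xi_1=(c')^{*}\,\xi_2$ of the corresponding classes $\xi_k\in{\rm Ext}^1(L^{-1},L)$ then reads $\xi_2=(c/c')\,\xi_1$, so $\xi_1$ and $\xi_2$ are collinear and define the same point of the projectivization, as required.

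The main obstacle is the first step. Unlike the stable situation of Lemma \ref{lem:inj}, the middle bundle $J_k\oplus J_k^{-1}$ is only polystable, so I cannot invoke uniqueness of a destabilizing subbundle; instead I must extract the isomorphism $\Psi$ directly from the correspondence of \cite[Theorem 3.1]{LSX:2021}, verifying that the post-composed M\"obius transformation genuinely upgrades to a \emph{holomorphic} (not merely topological) isomorphism of projective bundles carrying $s_1$ to $s_2$. Some care is also needed in the $2$-torsion case $J_k^2\cong\mathcal{O}_X$, where ${\rm Aut}(E_k)$ is all of ${\rm GL}(2,\mathbb{C})$ rather than the diagonal torus; however, because the argument only tracks the single sub-line-bundle $i_k(L)$ and not the splitting $J_k\oplus J_k^{-1}$, the enlarged automorphism group does not obstruct the conclusion $T\cong\mathcal{O}_X$.
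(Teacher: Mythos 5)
Your proposal is correct, but it takes a genuinely different route from the paper's. The paper argues at the level of developing maps: it writes $f_{2}=\frac{af_{1}+b}{cf_{1}+d}$, imposes that both monodromies lie in ${\rm U}(1)$, and deduces that $\phi$ must be diagonal or anti-diagonal when the metrics are nontrivial reducible (and is unrestricted in the trivial case); it then realizes the second pair $(P,a^{2}w)$ explicitly by composing the first embedding with the automorphism $\operatorname{diag}(a^{2}\mu,\mu)$ of $J\oplus J^{-1}$, treats the trivial reducible case separately using the full automorphism group of $J\oplus J$, and only at the end twists by a $2$-torsion bundle $N$ to conclude $L\cong L\otimes N$, hence $N\cong\mathcal{O}_X$. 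You instead transplant the abstract mechanism of Lemma \ref{lem:inj} wholesale: upgrade the projective equivalence to a holomorphic isomorphism of pairs $(P_{1},s_{1})\cong(P_{2},s_{2})$, lift it to $\eta\colon E_{1}\to E_{2}\otimes T$ with $T$ of $2$-torsion, kill $T$ via the preserved line subbundle, and read off collinearity from the commutative ladder. This eliminates both the case split and the explicit monodromy computation, and your observation that the argument tracks only the subbundle $i_k(L)$ (so the larger automorphism group in the $2$-torsion case is harmless) is exactly right. What the paper's version buys in exchange for its length is the explicit normal form of the admissible M\"{o}bius transformations, which matches the Umehara--Yamada parameter counts quoted in the remark after Proposition \ref{prop:red}; what yours buys is brevity and uniformity. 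The one point you correctly flag as needing care is the first step: since \cite[Theorem 3.1]{LSX:2021} is stated only as a surjection, one must check that a metric determines its flat pair up to isomorphism. This does hold --- the developing map is unique up to ${\rm PSU}(2)$ post-composition, hence the monodromy up to conjugation, hence the flat bundle and its section up to isomorphism --- and then $[x,z]\mapsto[x,\phi(z)]$ descends to the required holomorphic $\Psi$ with $\Psi\circ s_{1}=s_{2}$; writing out this verification would complete your argument.
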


\begin{proof}
The proof can be divided into the following two cases.

{\it Case 1.} Let $g_{1}$ and $g_{2}$ be two projectively equivalent reducible metrics. Assume that $g_{2}$ is a nontrivial reducible metric. Let $f_{1}$ and $f_{2}$ be the developing maps of $g_{1}$ and $g_{2}$,  respectively, with monodromies belonging to $U(1)$.  We can express $f_{2}$ as
$$
f_{2}=\frac{af_{1}+b}{cf_{1}+d},
$$
where $a, b, c, d\in\mathbb{C}$ and $ad-bc=1$. We choose a function element  \ $\mathfrak{f}_{2}$ of $f_2$ near  $p\in X$ such that
$$
\mathfrak{f}_{2}=\frac{a\mathfrak{f}_{1}+b}{c\mathfrak{f}_{1}+d},
$$
where ${\mathfrak f}_1$ is some function element of $f_1$.
Since $g_{2}$ is a nontrivial reducible metric and the monodromy of $f_{2}$ belongs to $U(1)$, there exist
$\varphi$, $\theta\in \mathbb{R}\backslash {\mathbb Z}$ such that
\[
e^{2\pi\sqrt{-1}\varphi}\mathfrak{f}_{2}= e^{2\pi\sqrt{-1}\varphi} \frac{a\mathfrak{f}_{1}+b}{c\mathfrak{f}_{1}+d} = \frac{a e^{2\pi\sqrt{-1}\theta}\mathfrak{f}_{1}+b}{ce^{2\pi\sqrt{-1}\theta} \mathfrak{f}_{1}+d}.
\]
This is equivalent to the system
$$
\displaystyle{\left\{\  \begin{aligned} ac e^{2\pi\sqrt{-1}\theta}(1-e^{2\pi\sqrt{-1}\varphi}) &=0 \\
  e^{2\pi\sqrt{-1}\varphi}(ad+bce^{2\pi\sqrt{-1}\theta})&=ade^{2\pi\sqrt{-1}\theta}+bc\,\\
  bd(1-e^{2\pi\sqrt{-1}\varphi}) &=0 \end{aligned} \right.}.
$$ 
From this, we find that either $c=b=0$ or $a=d=0$.

{\it Subcase 1.1.} When $c=b=0$, we have $ad=1$, and $f_{2}=a^{2}f_{1}$. Consequently, the branched projective coverings defined by $f_{1}$ and $f_{2}$ lead to the same ${\mathbb P}^1$ bundle over $X$, denoted as $P$. Moreover, we can deduce that  $g_{1}$ and $g_{2}$ correspond to the pairs $(P,w)$ and $(P,a^{2}w)$, respectively.

In the context of considering extensions in the space ${\rm Ext}^1(L^{-1},\,L)^{\rm sp}$, it is necessary for the corresponding rank 2 bundle associated with $P$ to take the form of $J\oplus J^{-1}$, where $J\in{\rm Pic}^0(X)$. Assuming that $g_{1}$ and $g_{2}$ can be derived from two strictly polystable extensions, namely $L\stackrel{i} \hookrightarrow J\oplus J^{-1}$ and $L\stackrel{j} \hookrightarrow (J\oplus J^{-1})\otimes N$, respectively, where 
$J^2\neq\mathcal{O}_{X}$ and $N^2=\mathcal{O}_{X}$. Furthermore, let us establish that, up to a nonzero complex number,  the pairs $(P,w)$ and $(P,a^{2}w)$ correspond to the aforementioned extensions, $L\stackrel{i} \hookrightarrow J\oplus J^{-1}$ and $L\stackrel{j} \hookrightarrow (J\oplus J^{-1})\otimes N$, respectively. 

For the bundle $J\oplus J^{-1}$, we choose an open cover $\{U_{\alpha}\}$ of $X$ with constant transition functions on $U_{\alpha\beta}=U_{\alpha}\cap U_{\beta}$ given by
$$
\displaystyle{g_{\alpha\beta}=\begin{pmatrix} a_{\alpha\beta} & 0 \\ 0 & a_{\alpha\beta}^{-1} \end{pmatrix}},
$$
where  $a_{\alpha\beta}\in\rm{U}(1)$.
Subsequently, we conside $(e_{\alpha},e^{*}_{\alpha})$ as a holomorphic local frame for $(J\oplus J^{-1})|_{U_{\alpha}}$ with respect to $\{U_{\alpha\beta},g_{\alpha\beta}\}$. For the holomorphic subbundle $L$ of $J\oplus J^{-1}$, we choose a local holomorphic frame $l_{\alpha}$ on $U_{\alpha}$,  along with corresponding transition functions $\ell_{\alpha\beta}$. Considering the embedding $L\stackrel{i} \hookrightarrow J\oplus J^{-1}$, we find that $i(l_{\alpha})$ is a nowhere vanishing section of $J\oplus J^{-1}$ on $U_{\alpha}$. It can be expressed as $i(l_{\alpha})=s_{1,\alpha}e_{\alpha}+s_{2,\alpha}e^{*}_{\alpha}$, where $s_{1,\alpha}$ and $s_{2,\alpha}$ have no common zeros. Consequently, we can locally represent $w$ as $\left\{{U_{\alpha},w_{\alpha}=\frac{s_{1,\alpha}}{s_{2,\alpha}}}\right\}$.

In the following, we illustrate that the metric $g_{2}$ can be defined by the extension $L\stackrel{i} \rightarrow J\oplus J^{-1}\stackrel{\phi} \rightarrow J\oplus J^{-1}$, where
$$
\displaystyle{
\phi=\begin{pmatrix} a^2\mu & 0 \\ 0 & \mu \end{pmatrix}\in{\rm Aut}_{X}(J\oplus J^{-1})}.
$$ Specifically, we have
\begin{align*}
\phi\circ i(l_{\alpha})=\phi(s_{1,\alpha}e_{\alpha}+ s_{2,\alpha}e^*_{\alpha})
=s_{1,\alpha}a^2\mu e_{\alpha}+s_{2,\alpha}\mu e^*_{\alpha}.
\end{align*}
Assume that $(P,\hat{w})$ is derived from the extension $L\stackrel{\phi\circ i} \longrightarrow J\oplus J^{-1}$. Then, locally,
$\hat{w}$ can be expressed as
$$
\displaystyle{\left\{{U_{\alpha},\hat{w}_{\alpha}=\frac{a^2\mu s_{1,\alpha}}{\mu s_{2,\alpha}}=a^2w_{\alpha}}\right\}}.
$$
This implies that the two embeddings $L\stackrel{\phi\circ i} \longrightarrow J\oplus J^{-1}$ and $L\stackrel{j} \longrightarrow (J\oplus J^{-1})\otimes N$ yield the same pair $(P,a^2w)$. Furthermore, consider the embedding
$$
L\otimes N
\xrightarrow{(\phi\circ i)\otimes {\rm id}_{N}}
(J\oplus J^{-1})\otimes N,
$$
which also defines the pair $(P,a^2w)$. Therefore, we conclude that $L=L\otimes N$ and $N=\mathcal{O}_{X}$. Additionally, $j=\lambda \phi\circ i$, where $\lambda\in \mathbb{C}^*$.

{\it Subcase 1.2.} In the situation where $a=d=0$, we possess a comparable proof analogous to Subcase 1, but we omit it for brevity.\\

{\it Case 2.} Let $g_{1}$ and $g_{2}$ be two projectively equivalent trivial reducible metrics. Assume
$f_{2}=\frac{af_{1}+b}{cf_{1}+d}$, where $a, b, c, d\in\mathbb{C}$ and  $ad-bc=1$. According to \cite[Lemma 4.2.]{CWWX:2015},  $f_{1}$ and $f_{2}$ are rational functions on the Riemann surface $X$. This implies that we can construct a trivial $\mathbb{P}^{1}$-bundle from $g_{1}$ and $g_{2}$. The corresponding rank 2 bundle is $J\oplus J$ for some 2-torsion line bundle $J$. From $g_{1}$ and $g_{2}$, we obtain two pairs $(P,w)$ and $(P,\tilde{w})$, respectively. Moreover, the pairs $(P,w)$ and $(P,\tilde{w})$ correspond to the extensions $L\stackrel{i} \hookrightarrow J\oplus J$ and $L\stackrel{j} \hookrightarrow \tilde{J}\oplus \tilde{J}$ (up to a nonzero complex number), where $P=\mathbb{P}(J\oplus J)=\mathbb{P}(\tilde{J}\oplus \tilde{J})$ and $J$, $\tilde{J}$ are 2-torsion line bundles.

Below, we show that the extension $L\stackrel{i} \rightarrow J\oplus J\stackrel{\phi} \rightarrow J\oplus J$ also defines the metric $g_{2}$, where
\[
\phi=\begin{pmatrix} a & b \\ c & d \end{pmatrix}\in \operatorname{Hom}(J\oplus J,J\oplus J).
\]
Let $(e_{1,\alpha},e_{2,\alpha})$ be a holomorphic local frame for $(J\oplus J)|_{U_{\alpha}}$ with respect to $\{U_{\alpha\beta},g_{\alpha\beta}\}$. We choose a local holomorphic frame $l_{\alpha}$ of $L$ on $U_{\alpha}$ and set $i(l_{\alpha})=s_{1,\alpha}e_{1,\alpha}+ s_{2,\alpha}e_{2,\alpha}$. Then, $w$ can be locally expressed as $\left\{{U_{\alpha},w_{\alpha}=\frac{s_{1,\alpha}}{s_{2,\alpha}}}\right\}$. Assuming that we obtain the pair $(P,\hat{w})$ from the extension $L\stackrel{\phi\circ i} \longrightarrow J\oplus J$. Since $\phi(e_{1,\alpha})=ae_{1,\alpha}+ ce_{2,\alpha}$ and $\phi(e_{2,\alpha})=be_{1,\alpha}+ de_{2,\alpha}$, we have
\begin{align*}
\phi\circ i(l_{\alpha})=&\phi(s_{1,\alpha}e_{1,\alpha}+ s_{2,\alpha}e_{2,\alpha})\
=(as_{1,\alpha}+bs_{2,\alpha})e_{1,\alpha}+ (cs_{1,\alpha}+ds_{2,\alpha})e_{2,\alpha}.
\end{align*}
This implies that $\hat{w}$ has the local expression
$$
\displaystyle{\left\{{U_{\alpha},\hat{w}_{\alpha}=\frac{as_{1,\alpha}+bs_{2,\alpha}}{cs_{1,\alpha}+ds_{2,\alpha}}=\frac{aw_{\alpha}+b}{cw_{\alpha}+d}}\right\}}.
$$
Therefore, $\hat{w}=\tilde{w}$, and the extensions $L\stackrel{\phi\circ i} \longrightarrow J\oplus J$ and $L\stackrel{j} \rightarrow \tilde{J}\oplus \tilde{J}$ yield the same pair $(P,\tilde{w})$.

Consider the extension
$$
L\otimes (J\otimes \tilde{J})
\xrightarrow{(\phi\circ i)\otimes {\rm Id}_{J\otimes \tilde{J}}}
(J\oplus J)\otimes (J\otimes \tilde{J})=\tilde{J}\oplus \tilde{J},
$$
which also yields the pair $(P,\tilde{w})$. Consequently, we have $L=L\otimes (J\otimes \tilde{J})$, $J\otimes \tilde{J}=\mathcal{O}_{X}$, and $J=\tilde{J}$. Thus, the extensions $L\stackrel{\phi\circ i} \longrightarrow J\oplus J$ and $L\stackrel{j} \rightarrow J\oplus J$ are co-linear in ${\rm Ext}^1(L^{-1},\,L)$.

\end{proof}

\begin{rem}
In the proof of Proposition \ref{prop:red}, we observe that when $J$ is a 2-torsion, the resulting $\mathbb{P}^{1}$-bundle $P$ is identified as the trivial $\mathbb{P}^{1}$ bundle. This leads to the acquisition of a trivial reducible metric. A family of projectively equivalent trivial reducible metrics can be defined by three real parameters, as documented by Umehara--Yamada (2000) (\cite[Fact 2.1., p.77]{Umehara-Y:2000}).

On the other hand, in cases where $J$ is not a 2-torsion, the process yields a nontrivial reducible metric. Correspondingly, a family of projectively equivalent nontrivial reducible metrics can be defined by one real parameters (\cite[Fact 2.1., p.77]{Umehara-Y:2000}).
\end{rem}

\section{Effective divisors represented by reducible metrics }
This section primarily focuses on introducing the concept of the restricted ramification divisor map and subsequently estimating the rank of its complex derivatives from below. 
Utilizing this estimate, we proceed to present the proofs for Theorems \ref{thm:gen} and \ref{thm:ell} in Subsection 5.3. In this context, Corollary \ref{cor:generic} is established through a dimensional analysis approach.

\subsection{Strictly polystable short exact sequence}
Given an embedding $L\stackrel{i}{\to} J\oplus J^{-1}$, we can canonically construct a holomorphic map $J\oplus J^{-1}\stackrel{p}{\to} L^{-1}$ such that the following short exact sequence of holomorphic vector bundles is formed:
\[
0\to L\stackrel{i}{\to} J\oplus J^{-1}\stackrel{p}{\to} L^{-1}\to 0.
\]
 This construction will be used in the next subsection.
\begin{lem}
\label{lem:construction of p}
Let $L,\,\,J$ be two holomorphic line bundles over a compact Riemann surface $X$ with $\deg\,L\leq 0$ and $\deg\,J= 0$. Suppose that we have an embedding  $L\stackrel{i} \hookrightarrow J\oplus J^{-1}$. Then we can construct a nowhere-vanishing holomorphic section $p\colon J\oplus J^{-1}\rightarrow L$ of ${\rm Hom}(J\oplus J^{-1},\, L)$ in a canonical way. This construction yields the following short exact sequence of holomorphic vector bundles:
\[
0\to L\stackrel{i}{\to} J\oplus J^{-1}\stackrel{p}{\to} L^{-1}\to 0.
\]
\end{lem}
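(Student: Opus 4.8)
The plan is to construct $p$ from the determinant (wedge) pairing on the rank-two bundle $E := J \oplus J^{-1}$, which makes the map manifestly canonical and free of any choice of local frame. First I would record the two facts that drive the whole argument: (i) because $i$ is an \emph{embedding} of bundles it is fiberwise injective, equivalently it corresponds to a pair $(s_1,s_2) \in H^0\big(X,\operatorname{Hom}(L,J)\big)\oplus H^0\big(X,\operatorname{Hom}(L,J^{-1})\big)$ with no common zero, exactly as in the proof of Lemma~\ref{lemma:divisor}; and (ii) there is a canonical isomorphism $\det E = \Lambda^2(J \oplus J^{-1}) = J \otimes J^{-1} \cong \mathcal{O}_X$, furnished by the tautological pairing of $J$ with $J^{-1}$.

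Next I would define $p$ by $p(v) := \big(\, l \mapsto i(l) \wedge v \,\big)$ for local sections $v$ of $E$. For each $x$ the assignment $l \mapsto i(l) \wedge v$ is $\mathbb{C}$-linear in $l \in L_x$ and takes values in $(\Lambda^2 E)_x \cong \mathbb{C}$, so it defines an element of $\operatorname{Hom}(L_x,\mathbb{C}) = (L^{-1})_x$; globalizing and invoking the canonical trivialization of $\det E$ from (ii) yields a holomorphic bundle map $p \colon E \to L^{-1}$, that is, a holomorphic section of $\operatorname{Hom}(E, L^{-1})$. Holomorphy is immediate, since both $i$ and the wedge pairing are holomorphic. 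This also pins down the target: from $\det E = L \otimes (E/L)$ and (ii) the quotient is forced to be $L^{-1}$, matching the displayed sequence.

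The key verification is that $p$ is nowhere vanishing, and here fact (i) is indispensable. Fix $x \in X$ and a nonzero $l_0 \in L_x$; then $i(l_0) \neq 0$ because $i$ is fiberwise injective. For $v \in E_x$ one has $p_x(v) = 0$ iff $i(l_0) \wedge v = 0$, iff $v \in \mathbb{C}\, i(l_0) = i(L_x)$, since in a two-dimensional space $\Lambda^2$ detects linear dependence. Hence $\ker p_x = i(L_x)$ is exactly one-dimensional, so $p_x$ has rank one and is therefore surjective onto the line $(L^{-1})_x$; in particular $p$ vanishes nowhere. The same computation shows $\operatorname{im}(i) = \ker(p)$ fiberwise, so that $0 \to L \stackrel{i}{\to} E \stackrel{p}{\to} L^{-1} \to 0$ is a short exact sequence of holomorphic vector bundles, as required.

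I expect the only genuine subtlety, rather than an obstacle, to be the bookkeeping of the canonical identifications: that the embedding is a \emph{subbundle} inclusion, so that $\ker p$ has constant rank one -- which is precisely where the no-common-zero condition on $(s_1,s_2)$ is used -- and that $\det(J \oplus J^{-1})$ is canonically trivial, giving $\operatorname{Hom}(L,\det E)=L^{-1}$. Everything else is a fiber-by-fiber linear-algebra computation. As a cross-check one can exhibit $p$ in the frames $(e_\alpha, e_\alpha^*)$ of Lemma~\ref{lemma:divisor}, where $i(l_\alpha) = s_{1,\alpha} e_\alpha + s_{2,\alpha} e_\alpha^*$ and, up to the canonical twist, $p$ is given by $a e_\alpha + b e_\alpha^* \mapsto s_{1,\alpha}\, b - s_{2,\alpha}\, a$, which is visibly surjective wherever $(s_{1,\alpha}, s_{2,\alpha}) \neq (0,0)$.
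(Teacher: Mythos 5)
Your proof is correct, and it reaches the same map $p$ (up to an overall sign) by a genuinely different route. The paper builds $p$ componentwise: it writes $i=(i_1,i_2)$, passes each component through the duality isomorphisms $\varphi\colon \operatorname{Hom}(L,J)\to\operatorname{Hom}(J^{-1},L^{-1})$ and $\varphi^{\vee}\colon \operatorname{Hom}(L,J^{-1})\to\operatorname{Hom}(J,L^{-1})$, sets $p=\Upsilon\bigl(\varphi^{\vee}(i_2),-\varphi(i_1)\bigr)$, and then verifies $p\circ i=0$ and the non-vanishing of $p$ by an explicit computation in the flat frames $(e_\alpha,e_\alpha^{*})$; in those frames the paper's $p$ sends $ae_\alpha+be_\alpha^{*}$ to $as_{2,\alpha}-bs_{1,\alpha}$, the negative of yours. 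Your wedge-pairing definition $p(v)=\bigl(l\mapsto i(l)\wedge v\bigr)$, combined with the canonical trivialization $\det(J\oplus J^{-1})\cong\mathcal{O}_X$, packages the same construction so that canonicity and holomorphy are automatic and the identity $\ker p_x=i(L_x)$ becomes the one-line fact that $\Lambda^2$ of a two-dimensional space detects linear dependence; this gives exactness and non-vanishing simultaneously, where the paper has to argue $p(x)\neq 0$ separately from the two cases $i_1(x)\neq 0$ and $i_2(x)\neq 0$. The only point worth making explicit in your write-up is the identification $\operatorname{Hom}\bigl(E,\operatorname{Hom}(L,\det E)\bigr)\cong\operatorname{Hom}(E,L^{-1})$ that you invoke, but you do flag it, and your frame cross-check confirms agreement with the paper's formula. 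What the paper's version buys in exchange for the extra bookkeeping is that the components $\varphi^{\vee}(i_2)$ and $-\varphi(i_1)$ are exactly the objects reused in the definition of the restricted ramification divisor map $\rho$ in Proposition \ref{prop:restricted-rdm}, so the componentwise description is not gratuitous there.
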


\begin{proof}
Let us consider the canonical isomorphic map 
\begin{equation}
\label{equ:varphi 1}
\varphi:{\rm Hom}(L,J) \rightarrow {\rm Hom}(J^{-1},L^{-1}),
\end{equation}
which relates a local section $s_{1}$ of ${\rm Hom}(L,J)$ to a local section $v^{*}$ of $J^{-1}$ through the equation  $\varphi(s_{1})(v^{*}):=v^{*}\circ s_{1}$.  Here, $s_{1}$ represents a local section of ${\rm Hom}(L,J)$, and $v^{*}$ represents a local section of $J^{-1}$.
Similarly,  we define another canonical isomorphism map 
\begin{equation}
\label{equ:varphi 2}
\varphi^{\vee}:{\rm Hom}(L,J^{-1})\rightarrow {\rm Hom}(J,L^{-1}),
\end{equation}
which relates a local section $s_{2}$ of ${\rm Hom}(L,J^{-1})$ to a local section $u$ of $J$ through the equation  $\varphi^{\vee}(s_{2})(u):=\rm{ev}_{u}\circ s_{2}$. In this case, $s_{2}$ represents a local section of ${\rm Hom}(L,J^{-1})$, $u$ represents a local section of $J$, and ${\rm ev}:J\rightarrow (J^{-1})^{-1}$ is the canonical isomorphic map given by ${\rm ev}_{u}(v^{*}):=v^{*}(u)$.

The embedding $L\stackrel{i} \hookrightarrow J\oplus J^{-1}$ can be represented by a pair $(i_1,\,i_2)$ of two sections $i_1$ and $i_2$ belonging to
$H^0\big({\rm Hom}(L,\,J)\big)$ and $H^0\big({\rm Hom}(L,\,J^{-1})\big)$, respectively, with no common zero. Since $\varphi$ and $\varphi^{\vee}$ are holomorphic isomorphisms,  then $\varphi(i_{1})$ and $\varphi^{\vee}(i_{2})$ are two sections in ${\rm Hom}(J^{-1},L^{-1})$ and ${\rm Hom}(J,L^{-1})$, respectively, with no common zero.
Denote by $p_{1}=\varphi^{\vee}(i_{2})$ and  $p_{2}=-\varphi(i_{1})$. Then we define the map as follows:
\begin{align*}
\Upsilon\colon{\rm Hom}(J,L^{-1})\oplus {\rm Hom}(J^{-1},L^{-1})&\rightarrow {\rm Hom}(J\oplus J^{-1},L^{-1})\\
(p_{1},p_{2}) &\longmapsto \Big((u\oplus v^{*})\mapsto p_{1}(u)+p_{2}(v^{*})\Big)
\end{align*}
where $u$ is a local section of $J$, $v^*$ is a local section of $J^{-1}$, and $p:=\Upsilon(p_{1},p_{2})$.

We claim that $\textup{Im}\ i= \textup{Ker}\ p$. Actually,  we choose an open covers $\{U_{\alpha}\}$ for $X$ and constant transition functions
$$g_{\alpha\beta}=\begin{pmatrix} a_{\alpha\beta} & 0 \\ 0 & a_{\alpha\beta}^{-1} \end{pmatrix}$$
on $U_{\alpha\beta}=U_{\alpha}\cap U_{\beta}$,
where  $a_{\alpha\beta}\in\rm{U}(1)$. Let $(e_{\alpha},e^{*}_{\alpha})$ be a holomorphic flat frame for $(J\oplus J^{-1})|_{U_{\alpha}}$ with respect to $\{U_{\alpha\beta},g_{\alpha\beta}\}$.  We also choose a local holomorphic frame $l_{\alpha}$ of $L$ on $U_{\alpha}$, along with the corresponding transition functions $\ell_{\alpha\beta}$ such that $l_{\beta}=\ell_{\alpha\beta}l_{\alpha}$. For the embedding $L\stackrel{i} \hookrightarrow J\oplus J^{-1}$, we set $i(l_{\alpha})=i_{1}(l_{\alpha})+i_{2}(l_{\alpha})=s_{1,\alpha}e_{\alpha}+s_{2,\alpha}e^{*}_{\alpha}$. By performing the following calculations:
\begin{align*}
p\circ i(l_{\alpha})=&p\circ(s_{1,\alpha}e_{\alpha}+s_{2,\alpha}e^{*}_{\alpha})\\
=&s_{1,\alpha}\varphi^{\vee}(i_{2})(e_{\alpha})-s_{2,\alpha}\varphi(i_{1})(e_{\alpha}^*)=0,
\end{align*}
we find that $p\circ i=0$, i.e. $p_{1}\circ i_{1}+p_{2}\circ i_{2}=0$. Therefore, we have shown that $\textup{Im} i\subseteq \textup{Ker}\ p$.
It should be noted that $p(e_{\alpha})=p_{1}(e_{\alpha})=\varphi^{\vee}(i_{2})(e_{\alpha})\neq0$, implying that $p(x)\neq0$ whenever $i_{2}(x)\neq0$, where $x\in X$. Similarly, $p(x)\neq0$ for any $i_{1}(x)\neq0$. Since $i_1$ and $i_2$ do not have any common zeros, it follows that $p$ is a nowhere-vanishing section of ${\rm Hom}(J\oplus J^{-1},L^{-1})$. Furthermore, for every $x\in X$, the dimension of $\textup{Im}\ i|_{x}$ is equal to $1$, which implies that $\textup{Im}\ i= \textup{Ker}\ p$. Consequently, we have obtained a short exact sequence of holomorphic vector bundles
\[
0\to L\stackrel{i}{\to} J\oplus J^{-1}\stackrel{p}{\to} L^{-1}\to 0.
\]

\end{proof}

\subsection{Restricted ramification divisor map}
In this subsection, we delve deeper into the study of reducible metrics by introducing and analyzing the \textit{restricted ramification divisor map}. We will compute its complex differential and estimate its rank, emphasizing the implications of this concept. Originating from a specific restriction of the ramification divisor map discussed in Subsection 2.1, this map is pivotal in identifying and characterizing effective divisors of even degrees that are associated with reducible metrics.

\begin{prop}
\label{prop:restricted-rdm}
Let $L,\,\,J$ be two holomorphic line bundles over a compact Riemann surface $X$ of genus $g_X>0$ such that

\begin{itemize}
    \item $\deg J=0$; 

\item $\dim H^0\big(\operatorname{Hom}(L,J)\big)>0$ \text{and} $\dim H^0\big(\operatorname{Hom}(L,J^*)\big)>0$.
\end{itemize}
Utilizing the two canonical isomorphisms $\varphi$ and $\varphi^\vee$ from (\ref{equ:varphi 1}, \ref{equ:varphi 2}), we define the {\rm restricted ramification divisor map} as follows:
\begin{align*}
\rho\colon H^0\big(X, \mathrm{Hom}(L, J)\big) \oplus H^0\big(X, \mathrm{Hom}(L, J^{-1})\big) &\rightarrow H^0\big(X, \mathrm{Hom}(L, L^{-1} \otimes K_X)\big)\\
(i_0, \widehat{i_0}) &\mapsto \varphi^{\vee}(\widehat{i_0}) \circ \partial_{J} \circ i_{0} - \varphi(i_{0}) \circ \partial_{J^{-1}} \circ \widehat{i_0},
\end{align*}
where $\partial_{J}$ and $\partial_{J^{-1}}$ are the $(1,0)$-parts of the Hermitian-Einstein connections $D_{J}$ on $J$ and $D_{J^{-1}}$ on $J^{-1}$, respectively. The following two statements hold{\rm :}
\begin{enumerate}
    \item $\rho$ is a bilinear holomorphic map, and its image constitutes a cone within the space  $H^0\big(X, \mathrm{Hom}(L, L^{-1} \otimes K_X)\big)$. 
    The complex derivative of $\rho$ at $(i_{1},i_{2}) \in H^0\big(\mathrm{Hom}(L, J)\big) \oplus H^0\big(\mathrm{Hom}(L, J^{-1})\big)$ along $(i_{0},\widehat{i_0})$ is given by
    \begin{align*}
    \rho_{*(i_{1},i_{2})}(i_{0},\widehat{i_0}) = &\ \varphi^{\vee}(i_{2}) \circ \partial_{J} \circ i_{0} + \varphi^{\vee}(\widehat{i_0}) \circ \partial_{J} \circ i_{1} \\
    &\ - \varphi(i_{1}) \circ \partial_{J^{-1}} \circ \widehat{i_0} - \varphi(i_{0}) \circ \partial_{J^{-1}} \circ i_{2}.
    \end{align*}
    Particularly, if $J^{-1} = J$, i.e., $J^2 = \mathcal{O}_X$, then $\varphi = \varphi^\vee$, $\rho$ is anti-symmetric on $H^0\big(\mathrm{Hom}(L, J)\big) \oplus H^0\big(\mathrm{Hom}(L, J^{-1})\big)$, and
    \begin{align*}
    \rho_{*(i_{1},i_{2})}(i_{0},\widehat{i_0}) = &\ \varphi(i_{2}) \circ \partial_{J} \circ i_{0} + \varphi(\widehat{i_0}) \circ \partial_{J} \circ i_{1} \\
    &\ - \varphi(i_{1}) \circ \partial_{J} \circ \widehat{i_0} - \varphi(i_{0}) \circ \partial_{J} \circ i_{2}.
    \end{align*}
    
    \item Suppose $i := (i_{1},i_{2}) \in H^0\big(\mathrm{Hom}(L, J)\big) \oplus H^0\big(\mathrm{Hom}(L, J^{-1})\big)$ embeds $L$ into $J \oplus J^{-1}$, yielding a short exact sequence $0 \to L \stackrel{i}{\hookrightarrow} J \oplus J^{-1} \stackrel{p}{\rightarrow} L^{-1} \to 0$ as in Lemma \ref{lem:construction of p}. Then $\rho(i_1, i_2)$ coincides with the second fundamental form $\beta^*$ of the embedding $L \stackrel{i}{\hookrightarrow} J \oplus J^{-1}$, where $J \oplus J^{-1}$ is endowed with a Hermitian-Einstein metric and $L$ with the induced metric.
\end{enumerate}
\end{prop}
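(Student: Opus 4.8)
The plan is to treat the two parts separately, noting that part (1) is essentially formal once $\rho$ is shown to land in holomorphic sections, while the arithmetic heart of the proposition is the identification $\rho(i_1,i_2)=\beta^*$ in part (2). The unifying technical device throughout is to compute everything in a \emph{flat holomorphic frame}. Since $\deg J=0$, the Hermitian-Einstein connection $D_{J\oplus J^{-1}}=D_J\oplus D_{J^{-1}}$ is flat and unitary, so on a suitable cover $\{U_\alpha\}$ with constant $\mathrm{U}(1)$-transition functions (as in Lemma \ref{lem:construction of p}) one may choose a frame $(e_\alpha,e^*_\alpha)$ of $(J\oplus J^{-1})|_{U_\alpha}$ with $D_{J\oplus J^{-1}}e_\alpha=D_{J\oplus J^{-1}}e^*_\alpha=0$. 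The $(0,1)$-part of this flatness shows the frame is holomorphic, while the $(1,0)$-part shows $\partial_J e_\alpha=0=\partial_{J^{-1}}e^*_\alpha$; hence in this frame the connection forms vanish and $\partial_J,\partial_{J^{-1}}$ act simply as $\partial$ on the (holomorphic) coefficient functions, trivialising every connection term.

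For part (1), bilinearity is immediate: the maps $\varphi,\varphi^\vee$ from \eqref{equ:varphi 1} and \eqref{equ:varphi 2} are $\mathbb{C}$-linear bundle isomorphisms, and each of the two summands defining $\rho$ is visibly linear in $i_0$ and in $\widehat{i_0}$ separately. To see that $\rho(i_0,\widehat{i_0})$ is genuinely a \emph{holomorphic} section of $\mathrm{Hom}(L,L^{-1}\otimes K_X)$, I would write $i_0(l_\alpha)=\sigma_{1,\alpha}e_\alpha$ and $\widehat{i_0}(l_\alpha)=\sigma_{2,\alpha}e^*_\alpha$ for a local holomorphic frame $l_\alpha$ of $L$, with $\sigma_{1,\alpha},\sigma_{2,\alpha}$ holomorphic; then $\partial_J\circ i_0(l_\alpha)=d\sigma_{1,\alpha}\otimes e_\alpha$, and the full expression becomes $d\sigma_{1,\alpha}\otimes\varphi^\vee(\widehat{i_0})(e_\alpha)-d\sigma_{2,\alpha}\otimes\varphi(i_0)(e^*_\alpha)$, a sum of products of holomorphic one-forms with holomorphic sections of $L^{-1}$, hence holomorphic. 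The image is a cone because $\rho(c\,i_0,\widehat{i_0})=c\,\rho(i_0,\widehat{i_0})$ for $c\in\mathbb{C}$. The derivative formula follows from the Leibniz rule applied to $\frac{d}{dt}\big|_{t=0}\rho(i_1+ti_0,\,i_2+t\widehat{i_0})$, and when $J^2=\mathcal{O}_X$ one has $J^{-1}\cong J$, $\varphi=\varphi^\vee$ and $\partial_{J^{-1}}=\partial_J$, so $\rho(i_0,\widehat{i_0})=\varphi(\widehat{i_0})\circ\partial_J\circ i_0-\varphi(i_0)\circ\partial_J\circ\widehat{i_0}$ is manifestly anti-symmetric, giving the stated special-case derivative.

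For part (2), I would compute $\beta^*=(p\otimes\mathrm{id})\circ\partial_{J\oplus J^{-1}}\circ i$ from \eqref{equ:2ndfund} in the same flat holomorphic frame, exactly as in the proof of Lemma \ref{lemma:divisor}. Writing $i(l_\alpha)=s_{1,\alpha}e_\alpha+s_{2,\alpha}e^*_\alpha$ with $s_{1,\alpha}=i_1(l_\alpha)$, $s_{2,\alpha}=i_2(l_\alpha)$ holomorphic, the flatness $\partial_{J\oplus J^{-1}}e_\alpha=\partial_{J\oplus J^{-1}}e^*_\alpha=0$ gives $\partial_{J\oplus J^{-1}}(i(l_\alpha))=ds_{1,\alpha}\otimes e_\alpha+ds_{2,\alpha}\otimes e^*_\alpha$. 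Applying $p\otimes\mathrm{id}$ and using the explicit description of $p$ from Lemma \ref{lem:construction of p}, namely $p(e_\alpha)=\varphi^\vee(i_2)(e_\alpha)$ and $p(e^*_\alpha)=-\varphi(i_1)(e^*_\alpha)$, I obtain $\beta^*(l_\alpha)=ds_{1,\alpha}\otimes\varphi^\vee(i_2)(e_\alpha)-ds_{2,\alpha}\otimes\varphi(i_1)(e^*_\alpha)$. On the other hand, evaluating $\rho(i_1,i_2)=\varphi^\vee(i_2)\circ\partial_J\circ i_1-\varphi(i_1)\circ\partial_{J^{-1}}\circ i_2$ on $l_\alpha$ yields the identical expression, since $\partial_J\circ i_1(l_\alpha)=ds_{1,\alpha}\otimes e_\alpha$ and $\partial_{J^{-1}}\circ i_2(l_\alpha)=ds_{2,\alpha}\otimes e^*_\alpha$. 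Thus $\rho(i_1,i_2)=\beta^*$ on each $U_\alpha$, and these agree on overlaps to a global identity.

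The only genuinely non-formal point — and the one requiring care — is the simultaneous use of a frame that is both holomorphic and annihilated by the $(1,0)$-parts of the Chern connections; this is precisely where the hypothesis $\deg J=0$ enters, guaranteeing flatness of the Hermitian-Einstein connection on $J\oplus J^{-1}$ and hence the existence of such a frame. Indeed, for a line bundle of nonzero degree the connection form would not vanish in any holomorphic frame and $\partial_J\circ i_0$ would fail to be holomorphic, so this hypothesis is what makes $\rho$ land in holomorphic sections at all. One must also check the consistency of the quotient map: the $\beta^*$ of \eqref{equ:2ndfund} is computed with the projection $p\colon J\oplus J^{-1}\to L^{-1}$, and for the identification to be literal (not merely up to scale) this $p$ must be the canonical one built in Lemma \ref{lem:construction of p}, so that $\rho(i_1,i_2)$ and $\beta^*$ are both valued in $\mathrm{Hom}(L,L^{-1}\otimes K_X)$ via the \emph{same} identification $E/i(L)\cong L^{-1}$. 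Everything else — bilinearity, the cone property, the Leibniz computation of the derivative, and the anti-symmetry in the $2$-torsion case — is routine.
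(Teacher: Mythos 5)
Your proposal is correct and follows essentially the same route as the paper: both rest on the identity $\rho(i_0,\widehat{i_0})=(p\otimes\mathrm{id})\circ\partial_{J\oplus J^{-1}}\circ i$, obtain the derivative by bilinearity and the Leibniz rule, and deduce part (2) by matching this with the second fundamental form from \eqref{equ:2ndfund}. The only difference is one of presentation: you verify holomorphy and the identification with $\beta^{*}$ by explicit computation in a local flat holomorphic frame, whereas the paper cites \eqref{equ:rho_2nd}, \eqref{equ:2ndfund} and Lemma \ref{lem:well_defn_1} directly (and carries out the very same flat-frame computation elsewhere, in Lemmas \ref{lemma:divisor} and \ref{lemma:lower bound}).
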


\begin{proof}
(1) We first note that the expression for $\rho\big(i_0, \widehat{i_0}\big)$ can be articulated as follows:
\begin{equation}
\label{equ:rho_2nd}
\rho\big(i_0, \widehat{i_0}\big) = \varphi^{\vee}(\widehat{i_0}) \circ \partial_{J} \circ i_{0} - \varphi(i_{0}) \circ \partial_{J^{-1}} \circ \widehat{i_0} = (p \otimes \mathrm{id}) \circ \partial_{J \oplus J^{-1}} \circ i.
\end{equation}
Here, $i$ is the holomorphic section of $\mathrm{Hom}(L, J \oplus J^{-1})$, determined by $(i_{0},\widehat{i_0})$, and $p$ is as constructed in the proof of Lemma \ref{lem:construction of p} from $i$. Note that $\partial_{J \oplus J^{-1}}$ denotes the $(1,0)$-part of the Hermitian-Einstein connection $D_{J \oplus J^{-1}}$. Thus, $\rho(i_{0},\widehat{i_0})$ is a holomorphic section of $\mathrm{Hom}(L, L^{-1} \otimes K_X)$. Moreover, for any $\lambda, \mu \in \mathbb{C}$, it holds that $\rho(\lambda i_{0}, \mu \widehat{i_0}) = \lambda \mu \rho(i_{0}, \widehat{i_0})$. Upon substituting $i_{t} := (i_{1} + ti_{0}, i_{2} + t\widehat{i_0})$, for $t \in \mathbb{C}$, into $\rho$'s definition, we deduce:
\begin{align*}
\rho(i_t) &= (p_{t} \otimes \mathrm{id}) \circ \partial_{J \oplus J^{-1}} \circ i_{t} \\
&= (p_{t} \otimes \mathrm{id}) \circ (\partial_{J} \oplus \partial_{J^{-1}}) \circ (i_{1} + ti_{0}, i_{2} + t\widehat{i_0}) \\
&= [\varphi^{\vee}(i_{2}) + t\varphi^{\vee}(\widehat{i_0})] \circ \partial_{J} \circ (i_{1} + ti_{0}) - [\varphi(i_{1}) + t\varphi(i_{0})] \circ \partial_{J^{-1}} \circ (i_{2} + t\widehat{i_0}).
\end{align*}
The derivative can then be calculated as:
\begin{equation}
\label{equ:derivative}
\begin{aligned}
\frac{\mathrm{d}}{{\mathrm{d}}t}\bigg|_{t=0} \rho(i_t) = \varphi^{\vee}(i_{2}) \circ \partial_{J} \circ i_{0} + \varphi^{\vee}(\widehat{i_0}) \circ \partial_{J} \circ i_{1} - \varphi(i_{1}) \circ \partial_{J^{-1}} \circ \widehat{i_0} - \varphi(i_{0}) \circ \partial_{J^{-1}} \circ i_{2}.
\end{aligned}
\end{equation}

(2) The second statement follows directly from Equations \eqref{equ:rho_2nd}, \eqref{equ:2ndfund}, and Lemma 
\ref{lem:well_defn_1}.
\end{proof}

Through the application of local flat trivialization of $J\oplus J^{-1}$, we can estimate from below the rank for the complex differential of the 
restricted ramification divisor map. This lemma plays a pivotal role in the subsequent derivation of Theorems \ref{thm:gen} and \ref{thm:ell} in the following subsection.

\begin{lem}
\label{lemma:lower bound}
Let $L$ and $J$ be two holomorphic line bundles over a compact Riemann surface $X$ of genus $g_X > 0$, where $\deg J = 0$ and $-k := \deg L \leq 1 - 2g_X$. Denote the complex dimension of the spaces $H^0\big(\mathrm{Hom}(L, J^{\pm 1})\big)$ by $d$. We consider two nonzero elements $i_{1} \in H^0\big(\mathrm{Hom}(L, J)\big)$ and $i_{2} \in H^0\big(\mathrm{Hom}(L, J^{-1})\big)$. An estimation for the rank of the complex differential $\rho_{*}$ of the restricted ramification divisor map $\rho\colon H^0\big(X, \mathrm{Hom}(L, J)\big) \oplus H^0\big(X, \mathrm{Hom}(L, J^{-1})\big) \rightarrow H^0\big(X, \mathrm{Hom}(L, L^{-1} \otimes K_X)\big)$ at $(i_{1},i_{2})$ can be established as follows{\rm :}
\begin{itemize}
    \item If $J^2 \neq \mathcal{O}_X$, then $d \leq \operatorname{rank} \rho_{*(i_{1},i_{2})} \leq 2d - 1$.
    In particular, we deduce that $\operatorname{rank} \rho_{*(i_{1},i_{2})} = 1$ when $d = 1$.
    \item If $J^2 = \mathcal{O}_X$ and $i_{1}, i_{2}$ are not co-linear in $H^0\big(X, \mathrm{Hom}(L, J)\big) $, then $d - 1 \leq \operatorname{rank} \rho_{*(i_{1},i_{2})} \leq 2d - 3$. 
    In particular, we deduce that $\operatorname{rank} \rho_{*(i_{1},i_{2})} = 1$ when $d = 2$.
\end{itemize}
\end{lem}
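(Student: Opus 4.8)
The plan is to reduce the rank estimate to an explicit computation in the flat unitary frames for $J\oplus J^{-1}$ already used in Lemmas \ref{lemma:divisor} and \ref{lem:construction of p}, and then to decouple the differential \eqref{equ:derivative} into two ``Wronskian'' maps whose kernels are governed by flat sections of $J^{\pm 2}$. First I would fix an open cover $\{U_\alpha\}$ with constant transition functions $g_{\alpha\beta}=\operatorname{diag}(a_{\alpha\beta},a_{\alpha\beta}^{-1})$, $a_{\alpha\beta}\in{\rm U}(1)$, a flat frame $(e_\alpha,e^*_\alpha)$, and a local frame $l_\alpha$ of $L$. Writing $i_1(l_\alpha)=s_{1,\alpha}e_\alpha$, $i_2(l_\alpha)=s_{2,\alpha}e^*_\alpha$ and the variation $i_0(l_\alpha)=u_\alpha e_\alpha$, $\widehat{i_0}(l_\alpha)=v_\alpha e^*_\alpha$, the key point is that in a flat frame the Hermitian--Einstein connection acts as the ordinary $\partial$ on components, and $\varphi,\varphi^\vee$ have the simple frame expressions $\varphi^\vee(i_2)(e_\alpha)=s_{2,\alpha}l_\alpha^*$, $\varphi(i_1)(e_\alpha^*)=s_{1,\alpha}l_\alpha^*$, etc. Substituting these into \eqref{equ:derivative}, I expect the differential to collapse to
\[
\rho_{*(i_1,i_2)}(i_0,\widehat{i_0})(l_\alpha)=\big[(s_{2,\alpha}u_\alpha'-u_\alpha s_{2,\alpha}')+(v_\alpha s_{1,\alpha}'-s_{1,\alpha}v_\alpha')\big]\,\mathrm{d}z_\alpha\otimes l_\alpha^*,
\]
where $'=\mathrm{d}/\mathrm{d}z_\alpha$, a sum of two Wronskians.

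This exhibits $\rho_{*(i_1,i_2)}$ as $\Psi_2(i_0)+\Psi_1(\widehat{i_0})$, where $\Psi_2(u)=W(s_2,u):=s_2u'-us_2'$ (with $i_2$ fixed) and $\Psi_1(v)=W(v,s_1)$ (with $i_1$ fixed), so that $\operatorname{rank}\rho_{*(i_1,i_2)}=\dim\big(\operatorname{Im}\Psi_1+\operatorname{Im}\Psi_2\big)$. Taking $\widehat{i_0}=0$ (resp. $i_0=0$) shows $\operatorname{Im}\Psi_2,\operatorname{Im}\Psi_1\subseteq\operatorname{Im}\rho_*$, so $\operatorname{rank}\rho_*\geq\max(\operatorname{rank}\Psi_1,\operatorname{rank}\Psi_2)$, while the standard identity $\dim(A+B)=\dim A+\dim B-\dim(A\cap B)$ controls the upper bound.

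Next I would compute the kernels. Since $W(s_2,u)=s_2^2\,\partial(u/s_2)$ in the flat frame and $u/s_2$ is a meromorphic section of $J^{2}$ (the $L$ factors cancel), $\Psi_2(u)=0$ is equivalent to $u/s_2$ being covariantly constant for the flat unitary connection on $J^2$; likewise $\Psi_1(v)=0$ forces $v/s_1$ to be flat on $J^{-2}$. A nonzero flat section has constant norm, hence is nowhere vanishing and trivializes the flat bundle, so it exists if and only if $J^2\cong\mathcal O_X$. Thus when $J^2\neq\mathcal O_X$ one gets $\ker\Psi_1=\ker\Psi_2=0$, whence $\operatorname{rank}\Psi_1=\operatorname{rank}\Psi_2=d$ and the lower bound $\operatorname{rank}\rho_*\geq d$; when $J^2=\mathcal O_X$ (so $J=J^{-1}$ and all sections lie in the single space $H^0(\operatorname{Hom}(L,J))$) the flat sections are constants, so $\ker\Psi_2=\mathbb C\,i_2$ and $\ker\Psi_1=\mathbb C\,i_1$, giving $\operatorname{rank}\Psi_1=\operatorname{rank}\Psi_2=d-1$ and the lower bound $\operatorname{rank}\rho_*\geq d-1$.

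For the upper bounds I would exhibit one nonzero element common to both images via the symmetric choice $i_0=i_1$, $\widehat{i_0}=i_2$, which yields $\Psi_2(i_1)=W(s_2,s_1)=\Psi_1(i_2)$; here $W(s_2,s_1)=0$ would force $s_1/s_2$ to be flat, i.e. $i_1,i_2$ proportional, which is excluded because $i_1,i_2$ are nonzero (non-torsion case) and non-co-linear (torsion case). Hence $\dim(\operatorname{Im}\Psi_1\cap\operatorname{Im}\Psi_2)\geq 1$, so $\operatorname{rank}\rho_*\leq\operatorname{rank}\Psi_1+\operatorname{rank}\Psi_2-1$, giving $2d-1$ and $2d-3$ respectively; specializing to $d=1$ and $d=2$ forces $\operatorname{rank}=1$ in the two stated cases. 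The main obstacle I anticipate is the kernel analysis: one must justify carefully that vanishing of the Wronskian is \emph{exactly} covariant-constancy of $u/s_2$ (resp. $v/s_1$) as a section of $J^{\pm2}$, and that a nonzero (a priori only meromorphic) flat section exists precisely when $J^2\cong\mathcal O_X$ — using that flatness forces constant norm and hence no poles or zeros. Once this is in place, the remaining image bookkeeping and the common-element argument are routine linear algebra.
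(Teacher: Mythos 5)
Your proposal is correct and follows essentially the same route as the paper: both pass to a flat unitary frame for $J\oplus J^{-1}$, reduce $\rho_{*(i_1,i_2)}$ to the sum of the two Wronskians $s_{2}u'-us_{2}'$ and $vs_{1}'-s_{1}v'$, and identify the kernels with flat sections of $J^{\pm 2}$ (the paper phrases this as the cocycle identity $C_{\alpha}=a_{\alpha\beta}^{2}C_{\beta}$ forcing $J^{2}\cong\mathcal{O}_X$). The only cosmetic difference is that you obtain the upper bounds via $\dim\bigl(\operatorname{Im}\Psi_1\cap\operatorname{Im}\Psi_2\bigr)\geq 1$, whereas the paper exhibits the corresponding kernel vectors $(i_1,-i_2)$, $(i_2,0)$, $(0,i_1)$ directly; these are equivalent by rank--nullity.
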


\begin{proof}
Applying the Riemann-Roch theorem, we deduce that the spaces $H^0\big(\mathrm{Hom}(L, J^{\pm 1})\big)$ possess a complex dimension of $d=k + 1 - g_X \geq g_X >0$.
We choose an open cover $\{U_{\alpha}\}$ of $X$, such that $J$ possesses a flat trivializing coordinate. Assuming that the constant transition functions of $J\oplus J^{-1}$ on $U_{\alpha\beta}=U_{\alpha}\cap U_{\beta}$ are given by
$$g_{\alpha\beta}=\begin{pmatrix} a_{\alpha\beta} & 0 \\ 0 & a_{\alpha\beta}^{-1} \end{pmatrix},$$
where  $a_{\alpha\beta}\in\rm{U}(1)$.
Let $(e_{\alpha},e^{*}_{\alpha})$ represent a holomorphic flat frame of $(J\oplus J^{-1})|_{U_{\alpha}}$ with respect to $\{U_{\alpha\beta},g_{\alpha\beta}\}$. In other words, we have
$$\begin{pmatrix}
  {e}_{\beta} \\
  {e}^{*}_{\beta}
\end{pmatrix} = \begin{pmatrix}
  a_{\alpha\beta} & 0 \\
  0 & a^{-1}_{\alpha\beta}
\end{pmatrix} \begin{pmatrix}
 {e}_{\alpha} \\
  {e}^{*}_{\alpha}
\end{pmatrix}.$$  
Choose a holomorphic frame $l_{\alpha}$ of $L$ on $U_{\alpha}$, with transition functions $\ell_{\alpha\beta}$ such that $l_{\beta}=\ell_{\alpha\beta}l_{\alpha}$.  Denote $i(l_{\alpha})=i_{1}(l_{\alpha})+i_{2}(l_{\alpha})=s_{1,\alpha}e_{\alpha}+s_{2,\alpha}e^{*}_{\alpha}$ and $i_{0}(l_{\alpha})=u_{\alpha}e_{\alpha}$ and $\widehat{i_0}(l_{\alpha})=v_{\alpha}e_{\alpha}^{*}$. Then we have
\begin{align*}
i_{t}(l_{\alpha})=(i_{1}+t\ i_{0})(l_{\alpha})+(i_{2}+t\ \widehat{i_0})(l_{\alpha})
=(s_{1,\alpha}+t\ u_{\alpha})e_{\alpha}+(s_{2,\alpha}+t\ v_{\alpha})e^{*}_{\alpha}.
\end{align*}
where $i_t:=(i_1+t i_0, i_2+t\hat{i_0})$.
Moreover, we get $s_{1,\alpha}=\ell^{-1}_{\alpha\beta}a_{\alpha\beta}\cdot s_{1,\beta}$, $s_{2,\alpha}=\ell^{-1}_{\alpha\beta}a^{-1}_{\alpha\beta}\cdot s_{2,\beta}$, $u_{\alpha}=\ell^{-1}_{\alpha\beta}a_{\alpha\beta}\cdot u_{\beta}$ and $v_{\alpha}=\ell^{-1}_{\alpha\beta}a^{-1}_{\alpha\beta}\cdot v_{\beta}$ on $U_{\alpha}\cap U_{\beta}$.
Since $\rho(i_t)(l_{\alpha})$ is a local section of $L^{-1}\otimes K_{X}$ on $U_{\alpha}$, $\rho(i_t)(l_{\alpha})$ can acts on $l_{\alpha}$.  Then we obtain the local expression of $\rho_{*(i_{1},i_{2})}\big(i_{0},\widehat{i_0}\big)$  by using the expression in (\ref{equ:derivative}):
\begin{align}
\label{equation:tangent map}
\frac{\rm d}{{\rm d}t}\bigg|_{t=0}\, \rho(i_t)(l_{\alpha})(l_{\alpha})&={\rm d}u_{\alpha}\cdot s_{2,\alpha}+{\rm d}s_{1,\alpha}\cdot v_{\alpha}-{\rm d}v_{\alpha}\cdot s_{1,\alpha}-{\rm d}s_{2,\alpha}\cdot u_{\alpha}.\nonumber \\
&=\left(\frac{{\rm d}u_{\alpha}}{{\rm d}z_{\alpha}}\cdot s_{2,\alpha}+\frac{{\rm d}s_{1,\alpha}}{{\rm d}z_{\alpha}}\cdot v_{\alpha}-\frac{{\rm d}v_{\alpha}}{{\rm d}z_{\alpha}}\cdot s_{1,\alpha}-\frac{{\rm d} s_{2,\alpha}}{{\rm d}z_{\alpha}}\cdot u_{\alpha}\right){\rm d}z_{\alpha}
\end{align}
The calculation gives us the following result on $U_{\alpha}\cap U_{\beta}$:
\[
\frac{{\rm d}u_{\alpha}}{{\rm d}z_{\alpha}}\cdot s_{2,\alpha}-\frac{{\rm d}s_{2,\alpha}}{{\rm d}z_{\alpha}}\cdot u_{\alpha}=\ell_{\alpha\beta}^{-2}\cdot\frac{{\rm d}z_{\beta}}{{\rm d}z_{\alpha}}\left(\frac{{\rm d}u_{\beta}}{{\rm d}z_{\beta}}\cdot s_{2,\beta}-\frac{{\rm d}s_{2,\beta}}{{\rm d}z_{\beta}}\cdot u_{\beta}\right)
\]
and
\[
\frac{{\rm d}s_{1,\alpha}}{{\rm d}z_{\alpha}}\cdot v_{\alpha}-\frac{{\rm d}v_{\alpha}}{{\rm d}z_{\alpha}}\cdot s_{1,\alpha}=\ell_{\alpha\beta}^{-2}\cdot\frac{{\rm d}z_{\beta}}{{\rm d}z_{\alpha}}\left(\frac{s_{1,\beta}}{{\rm d}z_{\beta}}\cdot v_{\beta}-\frac{{\rm d}v_{\beta}}{{\rm d}z_{\beta}}\cdot s_{1,\beta}\right),
\]
Thus,  $\left\{U_{\alpha},\frac{{\rm d}u_{\alpha}}{{\rm d}z_{\alpha}}\cdot s_{2,\alpha}-\frac{{\rm d}s_{2,\alpha}}{{\rm d}z_{\alpha}}\cdot u_{\alpha}\right\}$ defines the holomorphic section $\varphi^{\vee}(i_{2})\circ \partial_{J}\circ i_{0}-\varphi(i_{0})\circ \partial_{J^{-1}}\circ i_{2}$. Similarly, 
$\left\{U_{\alpha},\frac{{\rm d}v_{\alpha}}{{\rm d}z_{\alpha}}\cdot s_{1,\alpha}-\frac{{\rm d}s_{1,\alpha}}{{\rm d}z_{\alpha}}\cdot v_{\alpha}\right\}$ 
corresponds to 
the holomorphic section $\varphi(i_{1})\circ \partial_{J^{-1}}\circ \hat{i_{0}}-\varphi^{\vee}(\hat{i_{0}})\circ \partial_{J}\circ i_{1}$.

Moreover, when $(i_0, {\hat {i_0}})=( i_{1},- i_{2})$, we can easily see from Proposition \ref{prop:restricted-rdm} that the above derivative vanishes. Therefore, we can conclude that
${\rm rank}\ \rho_{*(i_{1},i_{2})}\leq 2d-1$.

Let us consider the following case where we set $\hat{i_{0}}=0$, implying $v_{\alpha}=0$. In other words, we are considering
$i_{t}=\big(i_{1}+t\ i_{0},i_{2}\big)$, $t\in \mathbb{C}$.
In this scenario, we have
\[
\frac{\rm d}{{\rm d}t}\bigg|_{t=0}\, \rho(i_t)(l_{\alpha})(l_{\alpha})={\rm d}u_{\alpha}\cdot s_{2,\alpha}-{\rm d}s_{2,\alpha}\cdot u_{\alpha}.
\]
Consequently, if $\frac{\rm d}{{\rm d}t}\bigg|_{t=0}\, \rho(i_t)=0$, then ${\rm d}u_{\alpha}\cdot s_{2,\alpha}-{\rm d}s_{2,\alpha}\cdot u_{\alpha}=0$ and $u_{\alpha}=C_{\alpha}\cdot s_{2,\alpha}$, where $C_{\alpha}$ is a complex constant on $U_{\alpha}$.

{\it Case 1.} If $C_{\alpha}=0$, it implies that $i_{0}=0$ and the restriction of the complex differential $\rho_{*(i_1,i_2)}$ to
the subspace $H^0\big(X,\, {\rm Hom}(L,\,J)\big)\oplus \{0\}$ of 
 $H^0\big(X,\, {\rm Hom}(L,\,J)\big)\oplus H^0\big(X,\, {\rm Hom}(L,\,J^{-1})\big) $
is non-degenerate. Thus, we can conclude that  ${\rm rank}\ \rho_{*(i_{1},i_{2})}\geq d$.

{\it Case 2.} Suppose $C_{\alpha}\neq0$. Considering
\[
0=\frac{\rm d}{{\rm d}t}\bigg|_{t=0}\, \rho(i_t)(l_{\beta})(l_{\beta})={\rm d}u_{\beta}\cdot s_{2,\beta}-{\rm d}s_{2,\beta}\cdot u_{\beta}\quad {\rm on}\quad U_\beta,
\]
and recalling $u_{\alpha}=\ell^{-1}_{\alpha\beta}a_{\alpha\beta}\cdot u_{\beta}$ on $U_{\alpha}\cap U_{\beta}$, we can deduce that $u_{\beta}\neq0$ since $i_2\not=0$. Since $u_{\beta}=C_{\beta}\cdot s_{2,\beta}$, where $C_{\beta}$ is a non-zero complex constant on $U_{\beta}$, we find that $C_{\alpha}=a_{
\alpha\beta}^2\cdot C_{\beta}$. Moreover, the pair $\{U_{\alpha},C_{\alpha}\}$ defines a global holomorphic section of $J^2$, leading us to $J^2=\mathcal{O}_{X}$. Therefore, we have $a^{2}_{\alpha\beta}=1$ and $C_{\alpha}=C_{\beta}:=C$. Actually, we obtain
$i_{t}=\big(i_{1}+t\ C\cdot i_{2},i_{2}\big)$, $t\in \mathbb{C}$,
which define a family of projectively equivalent trivial reducible metrics on $X$.
Furthermore, in this case, ${\rm Ker}\,\rho_{*(i_{1},i_{2})}$ contains at least three linearly independent vectors $(i_{2},0)$, $(0,i_{1})$ and $(i_{1},-i_{2})$ based on either Proposition \ref{prop:restricted-rdm} or the local expression (\ref{equation:tangent map}) of $\rho_{*(i_1,i_2)}$. Consequently, we can conclude that $d-1\leq\operatorname{rank} \rho_{*(i_{1},i_{2})}\leq 2d-3$.

\end{proof}

\subsection{Existence and non-uniqueness of integral reducible metrics}

\begin{proof}[Proof of Theorem \ref{thm:gen}]

Let $X$ be a compact Riemann surface with genus $g_X\geq 2$ and suppose that $\deg\, D\geq 6g_X-2$.
Since $\deg\big( K_X^{-1}\otimes {\mathcal O}_X(D)\big)\geq 4g_X$ and $K_X^{-1}\otimes {\mathcal O}_X(D)$ has even degree, we can choose a line bundle $L$ with $\deg L \leq -2g_X$ such that $L^{-2}= K_X^{-1}\otimes {\mathcal O}_X(D)$. Then there exists a strictly polystable extension in ${\rm Ext}^1(L^{-1},\, L)$ by Lemma \ref{lemma:ample}.  Additionally, according to Proposition \ref{prop:red}, the projective class of this strictly polystable extension in ${\mathbb P}\bigl({\rm Ext}^1(L^{-1},\,L)^{\rm sp}\bigr)$ gives rise to a family of reducible metrics that represent some divisor $D'\in |D|$.

Let $L\stackrel{i} \hookrightarrow J\oplus J^{-1}$ be the embedding obtained as mentioned above. This can be equivalently represented by a pair $(i_1,\,i_2)$ of two sections in
$H^0\big({\rm Hom}(L,\,J)\big)$ and $H^0\big({\rm Hom}(L,\,J^{-1})\big)$, respectively, where these sections have no common zero. Now, let's take $i_{0}$ and $\widehat{i_0}$ as any holomorphic sections of ${\rm Hom}(L,\,J)$ and ${\rm Hom}(L,\,J^{-1})$ respectively. We denote
\[
i_{t}=\big(i_{1}+t\ i_{0},i_{2}+t\ \widehat{i_0}\big) \quad t\in \mathbb{C},
\]
and for sufficiently small $t$, $i_{t}$ forms embeddings $L\to J\oplus J^{-1}$.  This implies that $\rho(i_t)$, as defined in Proposition \ref{prop:restricted-rdm}, corresponds exactly to the second fundamental forms $\beta^{*_{t}}$ 
for $|t|<<1$.  According to Lemma \ref{lemma:divisor}, we know that the divisors of $\beta^{*_{t}}$ as holomorphic sections of  ${\rm Hom}(L,\,L^{-1}\otimes K)$ are precisely represented by the reducible metrics defined by the embeddings $L \stackrel{i_{t}} \hookrightarrow J\oplus J^{-1}$ for $|t|<<1$. 
Therefore, in combination with Lemma \ref{lemma:lower bound}, we obtain the following conclusion:

\begin{itemize}
\item If $J^{2}\neq{\mathcal O}_X$, then there exists an arcwise connected Borel subset of $|D|$ with Hausdorff dimension $2(d-1)=2(-\deg\, L-g_{X})=\deg\, D-4g_{X}+2$. This subset contains $D'$, and each divisor in it can be represented by nontrivial reducible metrics.

\item Suppose that $J^{2}={\mathcal O}_X$. Since $i_{1}\neq \lambda i_{2}$ for any $\lambda\in\mathbb{C}$, there exists an arcwise connected Borel subset of $|D|$ with Hausdorff dimension $2(d-1-1)=2(-\deg\, L-g_{X}-1)=\deg\, D-4g_{X}$. This subset contains $D'$, and each divisor in it can be represented by trivial reducible metrics.
\end{itemize}

\end{proof}

Based on the proof of Theorem \ref{thm:gen}, when considering the scenario where $g_{X}=1$,  we can derive analogous results as presented in Theorem \ref{thm:ell}. 

\begin{proof}[Proof of Theorem \ref{thm:ell}]

Since $\deg\big( K_X^{-1}\otimes {\mathcal O}_X(D)\big)\geq 2$ and $K_X^{-1}\otimes {\mathcal O}_X(D)$ has even degree, we can choose a line bundle $L$ with $\deg L \leq -1$ such that $L^{-2}= K_X^{-1}\otimes {\mathcal O}_X(D)$.  Consequently, an embedding of $L\rightarrow J\oplus J^{-1}$ exists, where $J$ is a line bundle of degree zero, as established by Theorem \ref{thm:inclusionindirectsum}. According to Proposition \ref{prop:red}, the projective class of this embedding $L\to J\oplus J^{-1}$ in ${\mathbb P}\bigl({\rm Ext}^1(L^{-1},\,L)^{\rm sp}\bigr)$ yields a family of reducible metrics representing a certain divisor $D'\in |D|$. 
Moreover, in the scenario where $\deg D\geq 4$, in conjunction with Theorem \ref{thm:gen}, we deduce Theorem \ref{thm:ell}. In the instance where $\deg D=2$, the proof is left to Subsection 5.4.

\end{proof}

\begin{proof}[Proof of Corollary \ref{cor:generic}]
This proof concentrates on the case where $g_X \geq 2$, as the methodology for $g_X = 1$ is analogous. Our approach is based on a dimensional counting argument. For clarity, let's denote the degree of $D$ as $\mathfrak d$. Recall that the symmetric product ${\rm Sym}^{\mathfrak d}(X)$ represents the moduli space of effective divisors of degree $\mathfrak d$ on $X$. We define $\mathcal{R}_{\mathfrak d}$ as the subspace comprising ramification divisors of branched covers $X \to \mathbb{P}^1$ with a degree of $\frac{{\mathfrak d} + 2 - 2g_X}{2}$. Notably, effective divisors of degree $\mathfrak d$ that are represented by trivial reducible metrics constitute the space $\mathcal{R}_{\mathfrak d}$, as per \cite[Theorem 3.4]{CWWX:2015}.
Our goal is to estimate the complex dimension of $\mathcal{R}_{\mathfrak d}$ from above. The focus will be on the subset of simple ramification divisors within $\mathcal{R}_{\mathfrak d}$. According to the Riemann existence theorem (refer to \cite[Section 4.2.]{Donaldson:2011}), this subset's complex dimension does not exceed $({\mathfrak d} - 3)$. This implies that a generic element in ${\rm Sym}^{\mathfrak d}(X)$ cannot be a ramification divisor of any branched cover $X \to \mathbb{P}^1$.
\end{proof}

\subsection{cone metrics representing effective divisors with degree 2 on an elliptic curve}
In this subsection, we provide the proof of Theorem \ref{thm:ell} for the case where $\deg D = 2$. Before proceeding, we need to establish the following proposition:
\begin{prop}
\label{prop:extensionsoflinebundle-1}
Let $X$ represent an elliptic curve over $\mathbb{C}$, and let $L$ be a line bundle of degree $-1$ over $X$. Then, in the space $\operatorname{Ext}_{X}^{1}(L^{-1}, L) \cong \mathbb{C}^{2}$, besides the strictly polystable extensions given by
\[ 0 \to L \to J \oplus J^{-1} \to L^{-1} \to 0, \quad J \in \operatorname{Pic}^{0}(X) \setminus \{ 2-torsions\}, \]
there exist precisely four complex lines consisting of extensions of the form
\[ 0 \to L \to E \to L^{-1} \to 0, \quad E \in  \mathcal{E}(2,0), \]
where $\mathcal{E}(2,0)$ denote the set of isomorphism classes of indecomposable vector bundles of rank 2 and degree 0 over $X$.
Consequently, $\mathbb{P}\Big({\rm Ext}_{X}^{1}(L^{-1},L)^{\rm sp}\Big)\cong \mathbb{P}^{1}\setminus \{4\  points\}$ forms a path-connected space, where ${\mathbb P}\bigl({\rm Ext}^1(L^{-1},L)^{\rm sp}\bigr)$ denotes the projectivization of the space ${\rm Ext}^1(L^{-1},L)^{\rm sp}$ containing strictly polystable extensions of $L^{-1}$ by $L$.

\end{prop}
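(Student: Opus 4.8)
The plan is to determine, for each nonzero extension class, the isomorphism type of the middle bundle $E$ by reading off its degree-zero sub-line-bundles, and to pin down which classes realize each type by an obstruction computation. First I would record the numerics: $\operatorname{Ext}_X^1(L^{-1},L)\cong H^1(X,\mathcal{H}om(L^{-1},L))=H^1(X,L^2)$ with $\deg L^2=-2$, so by Riemann--Roch on the elliptic curve $h^0(L^2)=0$ and $h^1(L^2)=2$, confirming $\operatorname{Ext}_X^1(L^{-1},L)\cong\mathbb{C}^2$. Next, for a nonzero class $e$ I would show the (non-split) middle term $E_e$, which satisfies $\det E_e=\mathcal{O}_X$ and $\deg E_e=0$, is semistable: a destabilizing sub-line-bundle $A$ would have $\deg A\geq1$, and its image under $E_e\to L^{-1}$ is either an isomorphism onto $L^{-1}$, forcing a splitting (impossible), or zero, forcing $A\hookrightarrow L$ and $\deg A\leq-1$ (impossible).

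Then I would invoke Atiyah's classification of rank-two degree-zero bundles on $X$: the semistable ones with trivial determinant are exactly the decomposables $J\oplus J^{-1}$ with $J\in\operatorname{Pic}^0(X)$, and the indecomposables $F_2\otimes\xi$ with $\det=\xi^2=\mathcal{O}_X$, where $F_2$ is the Atiyah bundle (the unique nontrivial self-extension of $\mathcal{O}_X$) and $\xi$ ranges over the four $2$-torsion elements of $\operatorname{Pic}^0(X)$. Since $\operatorname{Hom}(L,J)=H^0(J\otimes L^{-1})$ is one-dimensional (degree $1$), $L$ cannot embed into $J\oplus J$ for $2$-torsion $J$ (the two components would be proportional and share a zero); hence $E_e$ is either $J\oplus J^{-1}$ with $J\not\cong J^{-1}$, or one of the four $F_2\otimes\xi$.

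The heart of the argument, and the step I expect to require the most care, is locating the classes giving each type. For $N\in\operatorname{Pic}^0(X)$ there is a nonzero section $\psi_N\in H^0(L^{-1}\otimes N^{-1})$ (of a degree-$1$ bundle, hence unique up to scalar), and I would prove that $N$ is a degree-zero sub-line-bundle of $E_e$ if and only if $e$ lies in the line $\ell_N:=\ker\big(\psi_N^{*}\colon\operatorname{Ext}_X^1(L^{-1},L)\to\operatorname{Ext}_X^1(N,L)\big)$ induced by $\psi_N\colon N\to L^{-1}$. In one direction, any degree-zero sub $N\hookrightarrow E_e$ maps nontrivially to $L^{-1}$ (it cannot land in $L$, as $\deg L<0$), the projection is a nonzero multiple of $\psi_N$, and this lift forces $\psi_N^{*}e=0$; conversely, when $\psi_N^{*}e=0$ a lift of $\psi_N$ to $E_e$ exists, and semistability makes its image saturated, producing the subbundle $N$. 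That $\ell_N$ is genuinely a line follows from the skyscraper sequence $0\to L^2\xrightarrow{\psi_N}L\otimes N^{-1}\to\mathbb{C}_{p}\to 0$, whose cohomology shows $\psi_N^{*}$ is surjective with one-dimensional kernel.

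Finally I would assemble the count. For each $2$-torsion $\xi$, every $e\in\ell_\xi\setminus\{0\}$ gives a semistable $E_e$ containing $\xi$; it cannot be $\xi\oplus\xi$ (no embedding of $L$) nor any $J\oplus J^{-1}$ with $J\not\cong J^{-1}$ (whose only degree-zero subs are $J,J^{-1}\neq\xi$), so $E_e$ is indecomposable with unique degree-zero sub $\xi$, i.e. $E_e\cong F_2\otimes\xi$; since $\xi$ is the unique such sub, the four lines $\ell_\xi$ are pairwise distinct. Any $e$ outside these four lines yields an $E_e$ with no $2$-torsion sub-line-bundle, hence $E_e\cong J\oplus J^{-1}$ with $J$ not $2$-torsion, a strictly polystable extension. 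Projectivizing gives $\mathbb{P}\big(\operatorname{Ext}_X^1(L^{-1},L)^{\rm sp}\big)\cong\mathbb{P}^1\setminus\{4\ \text{points}\}$, which is path-connected. As an independent consistency check I would note that under Serre duality $\mathbb{P}\big(\operatorname{Ext}_X^1(L^{-1},L)\big)=\mathbb{P}\big(H^0(L^{-2})^{*}\big)$ and the assignment $N\mapsto\ell_N$ realizes the degree-two map $X\to X/\{\pm1\}\cong\mathbb{P}^1$ attached to $|L^{-2}|$, whose four branch points are exactly the images of the $2$-torsion, recovering the four special lines geometrically.
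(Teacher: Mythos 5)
Your argument is correct, and it reaches the conclusion by a genuinely different route than the paper. The paper works directly with the Atiyah bundle $E_{0}$: it computes $\dim H^{0}(X,L^{-1}\otimes E_{0})=2$, shows the sections with a zero form a single line $Z$, and then analyzes the action of $\operatorname{Aut}(E_{0})$ on $\mathbb{P}H^{0}(X,L^{-1}\otimes E_{0})\cong\mathbb{P}^{1}$, identifying it (via a faithfulness lemma and the classification of one-parameter subgroups of $\operatorname{PSL}(2,\mathbb{C})$) with the translation group $z\mapsto z+b$, so that all nowhere-vanishing sections are equivalent under $\operatorname{Aut}(E_{0})$ and the classes with middle term $E_{0}\otimes T$ form one line for each of the four $2$-torsion $T$. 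You instead first establish semistability of the middle term of any non-split class, invoke Atiyah's classification to restrict to $J\oplus J^{-1}$ or $F_{2}\otimes\xi$, and then use the standard homological lifting criterion: $N\in\operatorname{Pic}^{0}(X)$ embeds as a subbundle of $E_{e}$ iff $e\in\ker\psi_{N}^{*}$, with the skyscraper sequence $0\to L^{2}\to L\otimes N^{-1}\to\mathbb{C}_{p}\to 0$ showing this kernel is a line. Your route avoids the somewhat delicate automorphism-group analysis and yields more: it locates the line $\ell_{N}$ for \emph{every} $N\in\operatorname{Pic}^{0}(X)$, not just the $2$-torsion ones, and makes the identification of $\mathbb{P}\bigl(\operatorname{Ext}^{1}_{X}(L^{-1},L)\bigr)$ with the target of the degree-two map attached to $|L^{-2}|$ transparent, with the four excluded points as its branch points. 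The paper's approach, on the other hand, produces the explicit transitivity statement for $\operatorname{Aut}(E_{0})$ on embeddings $L\hookrightarrow E_{0}$, which is of independent use. Both arguments are complete; the only facts you use implicitly without proof (that $\xi$ is the unique degree-zero sub-line-bundle of $F_{2}\otimes\xi$, and that $J,J^{-1}$ are the only degree-zero sub-line-bundles of $J\oplus J^{-1}$ for non-torsion $J$) are immediate by the same degree argument you already deploy, and the former is exactly the paper's Lemma on subbundles of $E_{0}$.
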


\subsubsection{Proof of Proposition \ref{prop:extensionsoflinebundle-1}}

Firstly, in accordance with \cite[Theorem 5]{Atiyah:1957elliptic}, a unique vector bundle $E_{0} \in \mathcal{E}(2,0)$ exists, such that $E_{0}$ fits into an exact sequence:
\[ 0 \to \mathcal{O}_{X} \to E_{0} \to \mathcal{O}_{X} \to 0. \]
Subsequently, we establish the following lemma:

\begin{lem}
\label{lem:subbundleofE0}
Suppose $L_{0} \in \operatorname{Pic}^{0}(X)$ is a line subbundle of $E_{0}$. Then $L_{0} \cong \mathcal{O}_{X}$.
\end{lem}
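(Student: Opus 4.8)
The plan is to translate the existence of the line subbundle into the nonvanishing of a cohomology group and then annihilate that group by a direct computation on the elliptic curve. First I would observe that a line subbundle $L_{0} \hookrightarrow E_{0}$ is the same datum as a nonzero holomorphic bundle map $L_{0} \to E_{0}$ (injective on each fibre), and hence gives a nonzero element of $H^{0}\big(X, \operatorname{Hom}(L_{0}, E_{0})\big) = H^{0}\big(X, L_{0}^{-1}\otimes E_{0}\big)$. Therefore it suffices to prove that this space vanishes whenever $L_{0} \in \operatorname{Pic}^{0}(X)$ satisfies $L_{0} \not\cong \mathcal{O}_{X}$, which will contradict the assumed embedding and force $L_{0}\cong\mathcal{O}_{X}$.

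To carry this out, I would tensor the defining sequence $0 \to \mathcal{O}_{X} \to E_{0} \to \mathcal{O}_{X} \to 0$ of the Atiyah bundle with $L_{0}^{-1}$, obtaining
\[
0 \to L_{0}^{-1} \to L_{0}^{-1}\otimes E_{0} \to L_{0}^{-1} \to 0,
\]
and then pass to the associated long exact sequence in cohomology. The key input is the standard dichotomy for a degree-zero line bundle $N$ on an elliptic curve: since $K_{X}\cong\mathcal{O}_{X}$, Riemann--Roch gives $h^{0}(N)-h^{1}(N)=0$, and a nonzero section of a degree-zero bundle vanishes nowhere and hence trivializes it; consequently $h^{0}(N)=h^{1}(N)=0$ when $N \not\cong \mathcal{O}_{X}$, while $h^{0}(N)=h^{1}(N)=1$ when $N\cong\mathcal{O}_{X}$. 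Applying this to $N=L_{0}^{-1}$ in the case $L_{0}\not\cong\mathcal{O}_{X}$ yields $H^{0}(X, L_{0}^{-1})=0$, and the segment $0 \to H^{0}(X,L_{0}^{-1}) \to H^{0}(X, L_{0}^{-1}\otimes E_{0}) \to H^{0}(X,L_{0}^{-1})$ of the long exact sequence then forces $H^{0}(X, L_{0}^{-1}\otimes E_{0})=0$.

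This vanishing contradicts the nonzero global section produced by the embedding $L_{0}\hookrightarrow E_{0}$, so $L_{0}\cong\mathcal{O}_{X}$, as asserted. I do not anticipate a serious obstacle here; the argument uses only the defining extension of $E_{0}$ and not its indecomposability. The only points meriting care are confirming that the subbundle genuinely yields a nonzero element of $H^{0}\big(X, L_{0}^{-1}\otimes E_{0}\big)$, and correctly recording the vanishing $H^{0}(X,L_{0}^{-1})=0$ for a nontrivial degree-zero line bundle, both of which are routine.
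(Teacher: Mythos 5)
Your proof is correct and is essentially the paper's argument in cohomological dress: the exact segment $0 \to H^{0}(X,L_{0}^{-1}) \to H^{0}(X, L_{0}^{-1}\otimes E_{0}) \to H^{0}(X,L_{0}^{-1})$ you extract from the defining extension of $E_{0}$ is precisely the paper's case split (the composite $L_{0}\to E_{0}\to \mathcal{O}_{X}$ is either nonzero or forces a factorization through the sub-$\mathcal{O}_{X}$), and your vanishing $h^{0}(N)=0$ for nontrivial degree-zero $N$ is the same fact as the paper's observation that a nonzero map between line bundles of equal degree is an isomorphism. The only cosmetic difference is that you argue by contradiction from $L_{0}\not\cong\mathcal{O}_{X}$ while the paper concludes $L_{0}\cong\mathcal{O}_{X}$ directly in each case; both are sound.
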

\begin{proof}
Consider the following diagram:
\[ \xymatrix{
 & & L_{0}\ar[d]^{j} \ar[rd]^{p} \ar@{-->}[ld]_{i} & & \\
 0 \ar[r] & \mathcal{O}_{X}\ar[r]^{i_{0}} & E_{0} \ar[r]^{p_{0}} & \mathcal{O}_{X} \ar[r] & 0
}\] 
If $p = p_{0} \circ j \colon L_{0} \to \mathcal{O}_{X}$ is nonzero, then $p$ must be an isomorphism because $\deg L_{0} = 0 = \deg \mathcal{O}_{X}$.
If $p = 0$, then there exists an inclusion $i \colon L_{0} \to \mathcal{O}_{X}$, which is also an isomorphism.
\end{proof}

\begin{rem}
\label{rem:subbundleofE0}
If $S$ is a line bundle of degree greater than zero, it cannot be a line subbundle of $E_{0}$. Otherwise, we would have the extension $0 \to S \to E_{0} \to S^{-1} \to 0$, which would be splitting, as $\operatorname{Ext}_{X}^{1}(S^{-1}, S) \cong H^{1}(X, S^{2}) = 0$.
\end{rem}

\begin{lem}
\label{lem:dimensionofLtoE0}
$\dim_{\mathbb{C}} H^{0}(X, L^{-1}\otimes E_{0}) = 2$.
\end{lem}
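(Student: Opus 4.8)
The plan is to tensor the defining sequence of the Atiyah bundle $E_{0}$ by $L^{-1}$ and extract the answer from the induced long exact cohomology sequence. Tensoring $0\to \mathcal{O}_{X}\to E_{0}\to \mathcal{O}_{X}\to 0$ with $L^{-1}$ yields the short exact sequence
\[
0\to L^{-1}\to L^{-1}\otimes E_{0}\to L^{-1}\to 0,
\]
so the whole computation is reduced to understanding the cohomology of the line bundle $L^{-1}$.

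First I would record the cohomology of $L^{-1}$. Since $X$ is an elliptic curve we have $g_{X}=1$ and $K_{X}\cong \mathcal{O}_{X}$, while $\deg L^{-1}=1$. By the Riemann--Roch theorem, $h^{0}(L^{-1})-h^{1}(L^{-1})=\deg L^{-1}=1$; and Serre duality gives $h^{1}(L^{-1})=h^{0}(K_{X}\otimes L)=h^{0}(L)$, which vanishes because $\deg L=-1<0$. Hence $h^{0}(L^{-1})=1$ and $h^{1}(L^{-1})=0$.

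Then I would feed this into the long exact sequence in cohomology attached to the tensored sequence:
\[
0\to H^{0}(L^{-1})\to H^{0}(L^{-1}\otimes E_{0})\to H^{0}(L^{-1})\xrightarrow{\ \delta\ } H^{1}(L^{-1})\to\cdots .
\]
Because $H^{1}(L^{-1})=0$, the connecting homomorphism $\delta$ is zero and $H^{1}(L^{-1}\otimes E_{0})=0$ as well, leaving the short exact sequence $0\to H^{0}(L^{-1})\to H^{0}(L^{-1}\otimes E_{0})\to H^{0}(L^{-1})\to 0$. Taking dimensions and using $h^{0}(L^{-1})=1$ gives $\dim_{\mathbb{C}}H^{0}(X,L^{-1}\otimes E_{0})=2$, as claimed.

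I do not expect a serious obstacle here: the only point requiring care is the computation of $h^{0}(L^{-1})$ and $h^{1}(L^{-1})$, which rests on $\deg L^{-1}=1$, $K_{X}\cong\mathcal{O}_{X}$, and $\deg L<0$. As an alternative consistency check one could compute $h^{1}$ directly by Serre duality, $H^{1}(L^{-1}\otimes E_{0})\cong H^{0}(L\otimes E_{0}^{*})^{*}$, and observe that $E_{0}$ is semistable of degree $0$ (Atiyah), so $L\otimes E_{0}^{*}$ is semistable of negative degree $-2$ and therefore admits no nonzero global section; combined with $\chi(L^{-1}\otimes E_{0})=2$ from Riemann--Roch, this again yields $h^{0}=2$.
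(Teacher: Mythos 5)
Your proof is correct and follows exactly the paper's argument: tensor the Atiyah extension $0\to\mathcal{O}_X\to E_0\to\mathcal{O}_X\to 0$ with $L^{-1}$ and read off the dimension from the long exact cohomology sequence using $h^0(L^{-1})=1$ and $h^1(L^{-1})=0$. The extra Riemann--Roch/Serre duality justification and the semistability cross-check are fine but not needed beyond what the paper records.
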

\begin{proof}
Due to the property of $E_{0}$, we have the exact sequence $0 \to \mathcal{O}_{X} \to E_{0} \to \mathcal{O}_{X} \to 0$. By tensoring with $L^{-1}$, we obtain
\[ 0 \to L^{-1} \to L^{-1} \otimes E_{0} \to L^{-1} \to 0. \]
 This leads to the induced long exact sequence of cohomology groups
\[ \xymatrix{
0\ar[r] & H^{0}(X, L^{-1})\ar[r] \ar@{=}[d] & H^{0}(X, L^{-1} \otimes E_{0})\ar[r] & H^{0}(X,L^{-1})\ar[r] \ar@{=}[d] & H^{1}(X, L^{-1})\ar@{=}[d] \\
 & \mathbb{C} & & \mathbb{C} & 0
}\]
\end{proof}

\begin{lem}
\label{lem:sectionwithzero}
The set $Z = \left\{ s \in H^{0}(X, L^{-1} \otimes E_{0}) \mid \exists \, p \in X \text{ such that } s(p) = 0 \right\}$ in $H^{0}(X, L^{-1} \otimes E_{0})$ forms a complex line.
\end{lem}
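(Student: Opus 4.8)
The plan is to identify $Z$ explicitly as the image of $H^0(X,L^{-1})$ under the inclusion of a distinguished sub-line-bundle $L^{-1}\hookrightarrow V:=L^{-1}\otimes E_0$, and then to check that this image is one-dimensional. Here $V$ is a rank-two bundle of degree $2$. The starting observation is the standard dictionary between zeros of a section and saturation: for a nonzero $s\in H^0(X,V)$, the image subsheaf $s(\mathcal{O}_X)\subset V$ has a saturation $S\hookrightarrow V$ which is a line sub-bundle, and $s$ factors as $\mathcal{O}_X\xrightarrow{\sigma}S\hookrightarrow V$ with $\sigma$ a nonzero section of $S$. Since $S\hookrightarrow V$ is a sub-bundle inclusion, the zeros of $s$ coincide with those of $\sigma$, whose number equals $\deg S$. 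Hence $s$ has a zero if and only if $\deg S>0$ (note $\deg S\geq 0$ automatically, as $S$ carries the nonzero section $\sigma$).

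Next I would classify the line sub-bundles $S\hookrightarrow V$ of positive degree. Tensoring by $L$ turns such an inclusion into a saturated inclusion $S\otimes L\hookrightarrow L\otimes V=E_0$ of degree $\deg S-1$. If $\deg S\geq 2$ then $\deg(S\otimes L)\geq 1>0$, which is impossible by Remark \ref{rem:subbundleofE0}. Therefore $\deg S=1$, so $\deg(S\otimes L)=0$, and Lemma \ref{lem:subbundleofE0} forces $S\otimes L\cong\mathcal{O}_X$, i.e.\ $S\cong L^{-1}$. Moreover this sub-bundle is unique up to scalars: the maps $L^{-1}\to V$ are parametrized by $\operatorname{Hom}(L^{-1},V)=H^0\big(X,L\otimes V\big)=H^0(X,E_0)$, and the non-split defining sequence $0\to\mathcal{O}_X\to E_0\to\mathcal{O}_X\to 0$ shows, through its cohomology long exact sequence with nonvanishing connecting map, that $\dim_{\mathbb{C}}H^0(X,E_0)=1$. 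Thus there is a single inclusion $j\colon L^{-1}\hookrightarrow V$ up to scaling, with a well-defined image.

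Combining the two steps yields $Z=j_*\big(H^0(X,L^{-1})\big)$. Indeed, any nonzero $s\in Z$ has saturation $S\cong L^{-1}$ realized by the unique embedding $j$, hence equals $j\circ\sigma$ for some $\sigma\in H^0(X,L^{-1})$; conversely, any nonzero $\sigma\in H^0(X,L^{-1})$ has degree one and so vanishes at a point, whence $j\circ\sigma$ lies in $Z$. Since $j_*$ is injective and $\deg L^{-1}=1$ gives $\dim_{\mathbb{C}}H^0(X,L^{-1})=1$ by Riemann--Roch on the elliptic curve, $Z$ is a complex line, as asserted.

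I expect the main obstacle to be the careful bookkeeping of the saturation step together with the uniqueness of the sub-line-bundle $L^{-1}\hookrightarrow V$; once Lemma \ref{lem:subbundleofE0} and Remark \ref{rem:subbundleofE0} are available, the remaining ingredients (the two degree computations and the translation $S\hookrightarrow V\Leftrightarrow S\otimes L\hookrightarrow E_0$) are routine.
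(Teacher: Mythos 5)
Your proof is correct, and its skeleton coincides with the paper's: both arguments pass from a section $s$ with a zero to the saturation $S$ of its image subsheaf, twist by $L$ to view $S\otimes L$ inside $E_0$, and then invoke Lemma \ref{lem:subbundleofE0} and Remark \ref{rem:subbundleofE0} to force $\deg S=1$ and $S\cong L^{-1}$. Where you diverge is in the concluding step. The paper argues geometrically: $L\otimes[s]\cong\mathcal{O}_X$ shows that the unique simple zero $p$ of $s$ satisfies $L\cong\mathcal{O}_X(-p)$, so any two elements of $Z$ vanish at the same point of the elliptic curve, and linear dependence is then read off from $\dim_{\mathbb C}H^0(X,L^{-1})=1$. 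You instead prove that the sub-line-bundle $L^{-1}\hookrightarrow L^{-1}\otimes E_0$ is unique up to scalar, via $\operatorname{Hom}(L^{-1},L^{-1}\otimes E_0)\cong H^0(X,E_0)\cong\mathbb{C}$ (the nonsplit extension killing the second copy of $H^0(\mathcal{O}_X)$ in the long exact sequence), so that $Z$ is literally the image of the one-dimensional space $H^0(X,L^{-1})$ under a fixed injection. Your route is the more airtight of the two: the paper's final sentence tacitly assumes that $s$ and $\sigma$ factor through the \emph{same} copy of $L^{-1}$ inside $L^{-1}\otimes E_0$, which is exactly the uniqueness statement you supply; the paper's version, in exchange, extracts the extra geometric information $L\cong\mathcal{O}_X(-p)$ that is reused implicitly in the surrounding discussion. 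Both computations ($h^0(E_0)=1$ versus $h^0(L^{-1})=1$ plus the injectivity of $X\to\operatorname{Pic}^1(X)$) are available on an elliptic curve, so the two proofs are interchangeable here.
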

\begin{proof}
The inclusion $H^{0}(X, L^{-1}) \xrightarrow{i} H^{0}(X, L^{-1}\otimes E_{0})$ implies that $Z \neq \{ 0 \}$. 
 Given $0 \neq s \in Z$, let $[s]$ represent the line subbundle of $L^{-1}\otimes E_{0}$ generated by $s$ \cite[p.419]{Atiyah:1957elliptic}. Consequently, $\deg [s] = \deg \operatorname{div} s \geq 1$, and $L \otimes [s]$ is a line subbundle of $E_{0}$.

Using Lemma \ref{lem:subbundleofE0} and Remark \ref{rem:subbundleofE0}, we know that $\deg [s] = 1$ and $L \otimes [s] = \mathcal{O}_{X}$. Therefore, $s$ has precisely one simple zero denoted by $p \in X$, implying $L \cong \mathcal{O}_{X}(-p)$.

Let $0 \neq \sigma \in Z$, with $q$ being the simple zero of $\sigma$. Then $L \cong \mathcal{O}_{X}(-q)$.  Since $X$ is an elliptic curve, we have $p = q$. Furthermore, since both $s$ and $\sigma$ are sections of $L^{-1}$, they must be linearly dependent, as $\dim_{\mathbb{C}} H^{0}(X, L^{-1}) = 1$.
\end{proof}
As a consequence of Lemmas \ref{lem:dimensionofLtoE0} and \ref{lem:sectionwithzero}, we deduce the following corollary:
\begin{cor}
There exist two nowhere vanishing sections $s_{1}, \, s_{2}$ of $L^{-1} \otimes E_{0}$,  forming a basis for $H^{0}(X, L^{-1} \otimes E_{0})$.
\end{cor}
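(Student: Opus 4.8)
The plan is to deduce the corollary purely by linear algebra from the two preceding lemmas, with no further geometric input needed. Write $V := H^{0}(X, L^{-1}\otimes E_{0})$. By Lemma \ref{lem:dimensionofLtoE0} we have $\dim_{\mathbb{C}} V = 2$, and by Lemma \ref{lem:sectionwithzero} the subset $Z\subset V$ consisting of sections that possess at least one zero is a complex line, i.e.\ a one-dimensional linear subspace (it contains the zero section and is closed under scaling). Since a section of $L^{-1}\otimes E_{0}$ is nowhere vanishing exactly when it lies in $V\setminus Z$, the corollary reduces to exhibiting two linearly independent elements of $V\setminus Z$.

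First I would choose any $s_{1}\in V\setminus Z$; such an element exists because $\dim_{\mathbb{C}} V = 2 > 1 = \dim_{\mathbb{C}} Z$, so $Z$ is a proper subspace. Then I would choose $s_{2}\in V\setminus\bigl(Z\cup \mathbb{C}s_{1}\bigr)$. This choice is possible because $Z$ and $\mathbb{C}s_{1}$ are two distinct complex lines in $V$ (distinct since $s_{1}\notin Z$), and a two-dimensional vector space over the infinite field $\mathbb{C}$ can never be covered by two proper linear subspaces. By construction $s_{2}\notin Z$, so $s_{2}$ is nowhere vanishing, while $s_{2}\notin \mathbb{C}s_{1}$ forces $s_{1}$ and $s_{2}$ to be linearly independent. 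Being two linearly independent vectors in the two-dimensional space $V$, they form a basis of $H^{0}(X, L^{-1}\otimes E_{0})$, and both are nowhere vanishing, as required.

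There is essentially no obstacle here once Lemmas \ref{lem:dimensionofLtoE0} and \ref{lem:sectionwithzero} are in hand; the only point demanding a word of care is the elementary fact that a complex vector space of dimension at least two is not the union of finitely many proper subspaces, which is precisely what guarantees the existence of the second section $s_{2}$ off both lines.
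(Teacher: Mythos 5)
Your proof is correct and follows the same (essentially unique) route the paper intends: the corollary is stated there as an immediate consequence of Lemmas \ref{lem:dimensionofLtoE0} and \ref{lem:sectionwithzero}, with the linear algebra left implicit, and you have simply written out that linear algebra — $Z$ is a line in the two-dimensional space $V$, and $V$ is not the union of two proper subspaces, so two independent vectors can be chosen off $Z$. No gap.
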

Additionally, we have the following lemma:
\begin{lem}
\label{lem:endomorphism}
Suppose $\phi \in \operatorname{End}(E_{0}) \cong \operatorname{End}(L^{-1} \otimes E_{0})$ such that:
\begin{itemize}
\item $s_{1}$ and $\phi(s_{1})$ are linearly dependent in $H^{0}(X, L^{-1} \otimes E_{0})$;
\item $s_{2}$ and $\phi(s_{2})$ are linearly dependent in $H^{0}(X, L^{-1} \otimes E_{0})$.
\end{itemize}
Then, it follows that $\phi = c \cdot \operatorname{Id}_{E_{0}}$, where $c$ is an element of $\mathbb{C}$.
\end{lem}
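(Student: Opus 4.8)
The plan is to reduce the whole statement to one geometric fact: the sections $s_1,s_2$ span the fiber of $L^{-1}\otimes E_0$ at every point of $X$ except a single one. Granting this, an endomorphism that annihilates both $s_1$ and $s_2$ as sections must vanish identically, and the lemma then follows from a short case analysis on the eigenvalues of $\phi$ acting on $H^0(X,L^{-1}\otimes E_0)$.

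First I would establish the spanning fact. A nonzero section $a\,s_1+b\,s_2$ with $[a:b]\in\mathbb{P}^1$ vanishes at some point of $X$ exactly when it lies in the line $Z$ of Lemma \ref{lem:sectionwithzero}. Since $s_1,s_2$ are nowhere vanishing (the Corollary preceding this lemma), neither lies in $Z$, so $\mathbb{P}(Z)$ is a single point of $\mathbb{P}\big(H^0(X,L^{-1}\otimes E_0)\big)=\mathbb{P}^1$ distinct from $[s_1]$ and $[s_2]$; moreover, as shown in the proof of Lemma \ref{lem:sectionwithzero}, every nonzero element of $Z$ vanishes precisely at the one point $p$ with $L\cong\mathcal{O}_X(-p)$. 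Hence there is a unique ratio $[a:b]$ for which $a\,s_1+b\,s_2$ degenerates in a fiber, and it degenerates only over $p$. Consequently $s_1(x),s_2(x)$ form a basis of $(L^{-1}\otimes E_0)_x$ for every $x\in X\setminus\{p\}$, and therefore any holomorphic $\eta\in\operatorname{End}(L^{-1}\otimes E_0)$ with $\eta(s_1)=\eta(s_2)=0$ satisfies $\eta_x=0$ for all $x\neq p$, so $\eta\equiv 0$ by holomorphicity.

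Now write $\phi(s_1)=\lambda_1 s_1$ and $\phi(s_2)=\lambda_2 s_2$, using the identification $\operatorname{End}(E_0)\cong\operatorname{End}(L^{-1}\otimes E_0)$. If $\lambda_1=\lambda_2=:c$, then $\eta:=\phi-c\operatorname{Id}$ annihilates both $s_1$ and $s_2$, so $\eta\equiv 0$ by the previous paragraph and $\phi=c\cdot\operatorname{Id}_{E_0}$, as claimed. If instead $\lambda_1\neq\lambda_2$, I would apply the same vanishing criterion to $\eta:=(\phi-\lambda_1\operatorname{Id})(\phi-\lambda_2\operatorname{Id})$: the two factors commute and kill $s_1$ and $s_2$ respectively, so $\eta$ annihilates both sections, whence $\eta=0$ identically. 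Then $\pi:=(\lambda_1-\lambda_2)^{-1}(\phi-\lambda_2\operatorname{Id})$ is idempotent, since $(\phi-\lambda_2\operatorname{Id})^2=(\lambda_1-\lambda_2)(\phi-\lambda_2\operatorname{Id})$ follows from $(\phi-\lambda_1\operatorname{Id})(\phi-\lambda_2\operatorname{Id})=0$, and it satisfies $\pi(s_1)=s_1$, $\pi(s_2)=0$. Its image and kernel are thus nonzero subbundles that split $L^{-1}\otimes E_0$, and hence $E_0$, as a direct sum, contradicting the indecomposability of $E_0\in\mathcal{E}(2,0)$. So the case $\lambda_1\neq\lambda_2$ cannot occur, and $\phi=c\cdot\operatorname{Id}_{E_0}$.

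The main obstacle will be the spanning fact: one must verify that the linear dependence of $s_1(x),s_2(x)$ in a fiber is governed entirely by the one-dimensional family $Z$ and that every member of $Z$ vanishes at the single point $p$, both of which are extracted from Lemma \ref{lem:sectionwithzero} and its proof. Once that is in place, the equal-eigenvalue case yields $\phi=c\cdot\operatorname{Id}_{E_0}$ at once, and the distinct-eigenvalue case is excluded cleanly by the indecomposability of $E_0$.
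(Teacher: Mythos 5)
Your argument is correct and follows essentially the same route as the paper's: both rest on the fact that $s_{1}(x),s_{2}(x)$ span the fiber of $L^{-1}\otimes E_{0}$ away from a small exceptional set (the paper uses $s_{1}\wedge s_{2}\not\equiv 0$ to get a finite set $F$, you use Lemma \ref{lem:sectionwithzero} to pin it down to the single point $p$), and both exclude distinct eigenvalues by producing a splitting of the indecomposable bundle $E_{0}$. Your idempotent construction in the unequal-eigenvalue case is just a more algebraic packaging of the paper's observation that $\phi=\operatorname{diag}(c_{1},c_{2})$ with the kernels $\ker(\phi-c_{i}\cdot\operatorname{Id}_{E_{0}})$ furnishing the splitting; the substance is identical.
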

\begin{proof}
Note that
\begin{align*}
F &= \left\{ x \in X \mid s_{1}(x), s_{2}(x) \text{ are linear dependent as vectors in the fiber } (L^{-1}\otimes E_{0})|_{x} \right\} \\
&= \left\{x \in X \mid (s_{1} \wedge s_{2})(x) = 0 \text{ as a holomorphic section of the line bundle } \det (L^{-1} \otimes E_{0})\right\}.
\end{align*}
Since $s_{1}, s_{2}$ form a basis of $H^{0}(X, L^{-1} \otimes E_{0})$, $s_{1} \wedge s_{2}$ is not a zero section of $\det (L^{-1} \otimes E_{0})$ . Hence $F$ is a finite set of $X$.

Suppose $\phi(s_{1}) = c_{1} \cdot s_{1}$ and $\phi(s_{2}) = c_{2} \cdot s_{2}$. Then the restriction of $\phi$ to the fiber $E_{0}|_{x}$ for $(x \notin F)$ is the matrix $\operatorname{diag}(c_{1}, c_{2})$. Thus, by the holomorphic property of $\phi$, we have $\phi = \operatorname{diag}(c_{1}, c_{2})$.

Furthermore, if $c_{1} \neq c_{2}$, then $E_{0}$ decomposes into a direct sum of two line subbundles $L_{i} = \ker (\phi - c_{i} \cdot \operatorname{Id}_{E_{0}})$.
\end{proof}

\begin{proof}[Proof of Proposition \ref{prop:extensionsoflinebundle-1}]
Let $ 0 \to L \to E \to L^{-1} \to 0$ be a non-trivial extension in $\operatorname{Ext}_{X}^{1}(L^{-1}, L)$.
\begin{enumerate}
\item If $E$ is decomposable, then $E \cong J \oplus J^{-1}$, where $J \in \operatorname{Pic}^{\leq 0}(X)$. Note that if $\deg J < -1$, then there is only a zero map from $L$ to $J$  and any section in $H^{0}\big(X, L^{-1} \otimes (J\oplus J^{-1})\big)$ cannot be nowhere vanishing. Hence, $\deg J = -1$ or $0$. If $\deg J = -1$, it's evident that $J \cong L$ and $E \cong L \oplus L^{-1}$, i.e. the extension is trivial. In summary, we obtain that $\deg J = 0$ and $J$ is not a $2$-torsion by Theorem \ref{thm:inclusionindirectsum}.
\item If $E$ is indecomposable, then $E = E_{0} \otimes T$ by \cite[Theorem 5]{Atiyah:1957elliptic}, where $T$ is a $2$-torsion line bundle. Note that if $T \in \operatorname{Pic}^{0}(X)$ is nontrivial, we have $\operatorname{Hom}(E_{0}, E_{0} \otimes T) = 0$ according to \cite[Lemma 18]{Atiyah:1957elliptic}. By replacing $L$ with $L \otimes T$, we may assume $E = E_{0}$.  Then we only need to show that the equivalence classes of the extension with form
\[ 0 \to L \to E_{0} \to L^{-1} \to 0 \]
form a complex line in $\operatorname{Ext}_{X}^{1}(L^{-1}, L) \cong \mathbb{C}^{2}$.

By \cite[Lemma 17]{Atiyah:1957elliptic}, we know that $\dim_{\mathbb{C}} \operatorname{End}(E_{0}) = 2$. If $\phi \in \operatorname{Aut}(E_{0})$ and $s \in Z$, then $\phi(s) \in Z$. Hence the induced action of $\operatorname{Aut}(E_{0})$ on $\mathbb{P}H^{0}(X, L^{-1}\otimes E_{0}) \cong \mathbb{P}^{1}$ has exactly one fixed point $[Z]$. That is, The actions of $\operatorname{Aut}(E_{0})$ on $\mathbb{P}^{1}$ are affine transformations.

On the other hand, the action of $\operatorname{Aut}(E_{0}) / \{ c \cdot \operatorname{Id}_{E_{0}} \mid c \in \mathbb{C}^{*} \}$ is faithful on $\mathbb{P}^{1}$  by Lemma \ref{lem:endomorphism}. According the classification of 1-dimensional subgroups of $\operatorname{PSL}(2, \mathbb{C})$, we know that $\operatorname{Aut}(E_{0}) / \{ c \cdot \operatorname{Id}_{E_{0}} \mid c \in \mathbb{C}^{*} \} \cong \{ z \mapsto a z \mid a \in \mathbb{C}^{*}\}$ or $\{ z \mapsto z + b \mid b \in \mathbb{C} \}$.  If $\operatorname{Aut}(E_{0}) / \{ c \cdot \operatorname{Id}_{E_{0}} \mid c \in \mathbb{C}^{*} \} \cong \{ z \mapsto a z \mid a \in \mathbb{C}^{*}\}$, then the induced action of $\operatorname{Aut}(E_{0})$ on $\mathbb{P}H^{0}(X, L^{-1}\otimes E_{0}) \cong \mathbb{P}^{1}$ has two fixed points. The decomposability of $E_{0}$ is established by the proof of Lemma \ref{lem:endomorphism}. Consequently, we deduce that $\operatorname{Aut}(E_{0}) / \{ c \cdot \operatorname{Id}_{E_{0}} \mid c \in \mathbb{C}^{*} \} \cong \{ z \mapsto z + b \mid b \in \mathbb{C} \}$, implying that the action is transitive. That is, for any nowhere vanishing section $\sigma \in H^{0}(X, L^{-1}\otimes E_{0})$, there is an automorphism $\phi \in \operatorname{Aut}(E_{0})$ such that $\phi(s_{1}) = \sigma$. Hence, any extension with form $0 \to L \to E_{0}\to L^{-1} \to 0$ is equivalent to the extension
\begin{equation}
\label{equ:extensionwiths1}
0 \to L \xrightarrow{s_{1}} E_{0} \to L^{-1} \to 0,
\end{equation}
with the fixed inclusion $L \xrightarrow{s_{1}} E_{0}$. All the equivalence classes with form \eqref{equ:extensionwiths1} form a complex line in $\operatorname{Ext}_{X}^{1}(L^{-1}, L)$. Furthermore, there are four $2$-torsion line bundles on an elliptic curve. This completes our proof.
\end{enumerate}
\end{proof}

\subsubsection{Completion of the proof of Theorem \ref{thm:ell}}

Let $X$ be an elliptic curve and $D$ an effective divisor with
$\deg\, D= 2$ on $X$. Notably, the canonical line bundle $K_{X}$ of $X$ is trivial.
Since $ {\mathcal O}_X(D)$ has an even degree, we can choose a line bundle $L$ with $\deg L=-1$ such that $L^{-2}= {\mathcal O}_X(D)$. According to Theorem \ref{thm:inclusionindirectsum}, we obtain a strictly polystable short exact sequence: 
\[
0 \to L \to J \oplus J^{-1} \to L^{-1} \to 0,
\]
where $J^2\neq\mathcal{O}_{X}$. Utilizing Lemma \ref{lemma:divisor},
we can employ a Hermitian-Einstein metric on $J \oplus J^{-1}$ to derive the second fundamental form $\beta^{*} \in H^{0}(X, L^{-2})$. If $D'$ denotes the effective divisor represented by the reducible metric for the preceding short exact sequence, then as per Lemma \ref{lemma:divisor}, the associated divisor $\operatorname{Div}(\beta^{*})=D'\in |D|$.

In accordance with Proposition \ref{prop:rdm}, we establish the ramification divisor map as follows:
\begin{align*}
{\mathfrak R}_{(L, L^{-1})}\colon \mathbb{P}\Big({\rm Ext}_{X}^{1}(L^{-1},L)^{\rm sp}\Big) &\to \mathbb{P}\Big(H_{\bar{\partial}}^{1,0}\big(X, {\rm Hom}(L, L^{-1})\big)\Big) \\
[\beta] &\mapsto  [\beta^{*}]
\end{align*}
It's worth noting that we have a bijection
\[
\mathbb{P}\Big(H_{\bar{\partial}}^{1,0}\big(X, {\rm Hom}(L, L^{-1})\big)\Big)=\mathbb{P}\Big(H^{0}\big(X, {\rm Hom}(L, L^{-1})\big)\Big)\rightarrow |D|,
\]
thus yielding the map
\begin{align*}
{\mathfrak R}\colon \mathbb{P}\Big({\rm Ext}_{X}^{1}(L^{-1},L)^{\rm sp}\Big) &\to |D| \\
[\beta] &\mapsto D'.
\end{align*}
If $D'=(p)+(q)$ where $p,q\in X$ and $p\neq q$, then we have a spherical metric ${\rm d}s^2$ representing the divisor $D'$.  Now we pick a nontrivial 2-torsion point $t$ of $X$, i.e. $2t=O$, where $O$ is the identity of the group structure on $X$. Additionally, since $(p+t)+(q+t)\sim (p)+(q)$, we have $(p+t)+(q+t)\in |D|$.   Let  $f$ be a developing map of ${\rm d}s^2$, then  $\mathrm{d} s^2=f^{*}\mathrm{d} s_{0}^2$, where\ $\mathrm{d} s_{0}^2=\frac{4|{\rm d}z|^2}{(1+|z|^2)^2}$\ is the Fubini-Study metric on $\mathbb{P}^1$. Denote 
\begin{align*}
L_{t}\colon X &\to X \\
x &\mapsto x+t
\end{align*}
as the translation map, which is an isomorphic map. Since $L_{t}(p+t)=p$ and $L_{t}(q+t)=q$, $(f\circ L_{t})^{*}\mathrm{d} s_{0}^2$ is a spherical metric representing the divisor $(p+t)+(q+t)$. Since the 2-torsion subgroup of $X$ is isomorphic to $\mathbb{Z}_{2}\times\mathbb{Z}_{2}$, we conclude that there are at least 4 divisors in $|D|$, represented by cone spherical metrics.

In the case where $p=q$, meaning $D'=2(p)\in |D|$, we have $2(p+t)\sim 2(p)$. Following the previous discussion, we again find at least 4 divisors in $|D|$, represented by cone spherical metrics.

Since ${\mathfrak R}$ is smooth and $\mathbb{P}\Big({\rm Ext}_{X}^{1}(L^{-1},L)^{\rm sp}\Big)$
is path-connected, as established in Proposition \ref{prop:extensionsoflinebundle-1}, the image of ${\mathfrak R}$ is path-connected. The map ${\mathfrak R}$ is not constant and max rank ${\rm d} {\mathfrak R}\geq 1$. This implies the existence of an arcwise connected Borel subset of Hausdorff dimension one in $|D|$, containing $D'$. Moreover, every divisor in this Borel set can be represented by a family of nontrivial reducible metrics defined by one real parameter.

\section{Discussion}

In this section, we introduce two open questions with the potential to enhance the results presented in this manuscript.
\subsection{Discussion on Question \ref{ques:sp}}
By considering the Riemann theta-divisor, we formulate the following conjecture for Riemann surfaces of high genus.
\begin{conj}
\label{conj:general}
Let $X$ be a compact Riemann surface with genus $g_X \geq 2$ and $L$ a {\it generic} line bundle with $\deg L \leq -\frac{1}{2} g_{X}$. Then there exists a strictly polystable extension of $L^{-1}$ by $L$. 
\end{conj}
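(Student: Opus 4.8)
The plan is to reduce the existence of a strictly polystable extension to the splitting of an effective divisor in $|L^{-2}|$, and then to produce that divisor using the Riemann theta-divisor together with the genericity of $L$. Set $n := -\deg L$, so that the hypothesis reads $n \geq \tfrac12 g_X$, i.e. $\deg L^{-2} = 2n \geq g_X$. By Lemma \ref{lem:negative}, a strictly polystable extension of $L^{-1}$ by $L$ exists precisely when $L$ embeds as a subbundle $L \hookrightarrow J \oplus J^{-1}$ for some $J \in \operatorname{Pic}^0(X)$, and such an embedding is the datum of two sections $s_1 \in H^0(L^{-1}\otimes J)$ and $s_2 \in H^0(L^{-1}\otimes J^{-1})$ with no common zero. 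I would first recast this in divisor form: the pair $(s_1,s_2)$ exists if and only if there are two \emph{disjoint} effective divisors $D_1, D_2$ of degree $n$ with $D_1 + D_2 \in |L^{-2}|$. Indeed, given such $D_1, D_2$ one sets $J := L \otimes \mathcal{O}_X(D_1) \in \operatorname{Pic}^0(X)$ and checks $L^{-1}\otimes J^{-1} \cong \mathcal{O}_X(D_2)$, the disjointness being exactly the no-common-zero condition; conversely the zero divisors of $s_1,s_2$ furnish $D_1,D_2$.

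The next step produces such divisors, and this is where the bound $\deg L \leq -\tfrac12 g_X$ enters: since $2n \geq g_X$, Riemann--Roch gives $h^0(L^{-2}) \geq 2n - g_X + 1 \geq 1$, so $|L^{-2}| \neq \emptyset$. The theta-divisor makes the threshold conceptual. Writing $V_1 := \{J \in \operatorname{Pic}^0(X) : h^0(L^{-1}\otimes J) > 0\}$ and $V_2 := \{J : h^0(L^{-1}\otimes J^{-1}) > 0\}$, each is a translate of the Brill--Noether locus $W_n$, whose class is $\theta^{g_X-n}/(g_X-n)!$ by Poincaré's formula, and $V_2$ is the image of $V_1$ under inversion, hence carries the same class. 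Because $n \geq g_X/2$ we have $2(g_X-n) \leq g_X$, so the product class $\theta^{2(g_X-n)}/\big((g_X-n)!\big)^2$ is nonzero (at the extreme $n=g_X/2$ it is $\binom{g_X}{g_X/2}$ times the point class, using that the top self-intersection of $\theta$ equals $g_X!$). A nonzero product class forces $V_1 \cap V_2 \neq \emptyset$, equivalently $|L^{-2}|\neq\emptyset$; either way the sections $s_1,s_2$ exist.

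The crux is to ensure $s_1,s_2$ have \emph{no} common zero, equivalently that some member of $|L^{-2}|$ is reduced, since any partition of the $2n$ distinct points of a reduced member into two groups of $n$ yields disjoint $D_1,D_2$. Here the genericity of $L$ is indispensable. I would exploit that the squaring map $\operatorname{Pic}^{-n}(X) \to \operatorname{Pic}^{-2n}(X)$, $L \mapsto L^{-2}$, is a surjective translate of the multiplication-by-two isogeny, so a generic $L$ yields a generic line bundle $L^{-2}$ of degree $2n$. If $2n > g_X$, a general line bundle of degree $\geq g_X+1$ is base-point-free, and Bertini's theorem gives a reduced general member of $|L^{-2}|$; if $2n = g_X$ (with $g_X$ even), $|L^{-2}|$ is a single divisor which, being a generic point of $\operatorname{Sym}^{g_X}(X)$, is reduced. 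As a consistency check, for $g_X=1$ and $\deg L = -1$ this reproduces Theorem \ref{thm:inclusionindirectsum}: a reduced $p+q$ with $p\neq q$ forces $J = L\otimes\mathcal{O}_X(p)$ to satisfy $J^2 = \mathcal{O}_X(p-q) \neq \mathcal{O}_X$.

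I expect the main obstacle to be exactly this last genericity statement, not the production of sections: the existence of $s_1,s_2$ is handed to us cleanly by the theta threshold, whereas rigorously isolating the \emph{bad} locus in $\operatorname{Pic}^{-n}(X)$---where $|L^{-2}|$ acquires base points or every member is non-reduced---requires a careful dimension count on the incidence between $\operatorname{Pic}^{-n}(X)$, $\operatorname{Sym}^{2n}(X)$, and the non-reduced (diagonal) locus, and a verification that squaring carries a generic $L$ off this bad locus. That some genericity cannot be dropped is illustrated by special configurations, for instance $L^{-2}$ a power of the $g^1_2$ on a hyperelliptic curve, where $|L^{-2}|$ may fail to contain any reduced divisor.
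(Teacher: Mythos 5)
The statement you are addressing is stated in the paper as Conjecture \ref{conj:general} and is \emph{not} proved there: the authors offer only partial evidence (Lemma \ref{lem:baseloci}, Question \ref{ques:emb}, and the four comments following it), so there is no proof of theirs to compare against. Your proposal appears to genuinely settle the conjecture, and it does so by inverting the order of choices that the paper's discussion is locked into. The authors fix $L$, seek $J\in\operatorname{Pic}^0(X)$ with $|L^{-1}\otimes J|$ and $|L^{-1}\otimes J^{-1}|$ both nonempty (which the Poincar\'e formula supplies once $\deg L\leq -g_X/2$), and then get stuck on forcing the two base loci apart. You instead pick a \emph{reduced} divisor $D=p_1+\cdots+p_{2n}\in|L^{-2}|$ first, split it as $D_1+D_2$ with disjoint supports, and only then set $J:=L\otimes\mathcal{O}_X(D_1)$; the tautological sections of $\mathcal{O}_X(D_1)\cong L^{-1}\otimes J$ and $\mathcal{O}_X(D_2)\cong L^{-1}\otimes J^{-1}$ have no common zero by construction, and the second half of Lemma \ref{lem:negative} upgrades the resulting embedding to a strictly polystable extension for free. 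The genericity input --- that $|L^{-2}|$ contains a reduced member --- is exactly where $\deg L^{-2}=2n\geq g_X$ and the genericity of $L$ enter, and your two-case treatment checks out: for $2n\geq g_X+1$ the non-base-point-free locus in $\operatorname{Pic}^{2n}(X)$ has dimension at most $2g_X-2n<g_X$ and Bertini applies, while for $2n=g_X$ the Abel--Jacobi map is birational and the image of the diagonal of $\operatorname{Sym}^{g_X}(X)$ is a proper closed subset, so the unique member of $|L^{-2}|$ is reduced off a proper closed locus; finally the preimage of any proper closed subset under the finite surjection $L\mapsto L^{-2}$ is again proper closed. This is consistent with the paper's own counterexample in its comment (4) ($L=\mathcal{O}_X(-p)$ on a genus-two curve, where $|L^{-2}|=\{2p\}$ cannot be split into disjoint halves), which lies precisely on the excluded locus.

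Two small corrections. Your closing illustration of why genericity is needed is wrong: if $L^{-2}$ is a multiple $nH$ of the hyperelliptic $g^1_2$, then $|nH|$ is base-point-free of dimension at least one and \emph{does} contain reduced members by Bertini; the genuine failure mode is the $|2p|$ example above. And the routine verifications you defer at the end (properness and closedness of the non-base-point-free locus, closedness of the image of the diagonal, semicontinuity of $h^0$) should be written out, but none of them conceals a difficulty; the argument as outlined is complete.
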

For the existence of embedding, we establish the following criteria:
\begin{lem}
\label{lem:baseloci}
Let $X$ be a compact Riemann surface with genus $g_X\geq 2$ and $L,\, J$ be line bundles on $X$ such that
$\deg\, L<0$ and $\deg\, J=0$. Then there exists an embedding $L \hookrightarrow J\oplus J^{-1}$ if and only if $|L^{-1}\otimes J|\not=\emptyset $, $|L^{-1}\otimes J^{-1}|\not=\emptyset$ and the base locus of $|L^{-1}\otimes J|$ does not intersect that of  $|L^{-1}\otimes J^{-1}|$.
\end{lem}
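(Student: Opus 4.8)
The plan is to recast the existence of the embedding in terms of pairs of sections and then settle the equivalence by an elementary hyperplane-avoidance argument. As was used repeatedly earlier in the excerpt (for instance in Lemma \ref{lem:negative} and Lemma \ref{lem:construction of p}), an embedding $L \hookrightarrow J \oplus J^{-1}$ is the same datum as a pair $(s_1,s_2)$ with $s_1 \in H^0(X, L^{-1}\otimes J)$ and $s_2 \in H^0(X, L^{-1}\otimes J^{-1})$ possessing no common zero, under the identifications $\operatorname{Hom}(L,J^{\pm 1}) \cong L^{-1}\otimes J^{\pm 1}$. I would set $V_1 = H^0(X, L^{-1}\otimes J)$ and $V_2 = H^0(X, L^{-1}\otimes J^{-1})$ and write $B_1, B_2$ for the base loci of the two linear systems. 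Since $\deg(L^{-1}\otimes J^{\pm 1}) = -\deg L > 0$, every nonzero section has a nonempty finite zero divisor; in particular $B_1$ and $B_2$ are finite sets whenever $V_1, V_2 \neq 0$. This finiteness is the feature that makes each avoidance step below a finite union of proper subspaces.

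For the forward direction I would argue as follows. If $(s_1,s_2)$ defines an embedding, then both sections are nonzero: were, say, $s_1$ identically zero, the bundle map would be $(0,s_2)$, which is a subbundle inclusion only if $s_2$ is nowhere vanishing, impossible because $L^{-1}\otimes J^{-1}$ has positive degree. Hence $|L^{-1}\otimes J|$ and $|L^{-1}\otimes J^{-1}|$ are nonempty. If some point $p$ lay in $B_1 \cap B_2$, then every section of either system vanishes at $p$, so in particular $s_1(p)=s_2(p)=0$, contradicting the no-common-zero hypothesis; thus $B_1 \cap B_2 = \emptyset$.

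The converse is the substantive direction, carried out in two selection steps. Assuming $V_1,V_2 \neq 0$ and $B_1 \cap B_2 = \emptyset$, I would first choose $s_2 \in V_2$ not vanishing at any point of the finite set $B_1$: for each $q \in B_1$ we have $q \notin B_2$, so $\{s \in V_2 : s(q)=0\}$ is a proper linear subspace of $V_2$, and a complex vector space is not a finite union of proper subspaces, so a suitable $s_2$ exists. This forces the zero set $Z(s_2)$ to be disjoint from $B_1$. I would then choose $s_1 \in V_1$ not vanishing at any point of the finite set $Z(s_2)$: every $p \in Z(s_2)$ lies outside $B_1$, so $\{s \in V_1 : s(p)=0\}$ is again proper, and the same finite-union argument produces $s_1$. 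By construction $Z(s_1)\cap Z(s_2)=\emptyset$, so $(s_1,s_2)$ has no common zero and gives the desired embedding.

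The main obstacle is not a hard estimate but the bookkeeping around the base loci: one must check that the conditions ``$q \notin B_2$'' and ``$p \notin B_1$'' are exactly what make the relevant subspaces proper, and that the \emph{disjointness} of $B_1$ and $B_2$ is precisely what allows the two selection steps to be performed in sequence without conflict. I would also record that the finiteness of $B_1, B_2, Z(s_1), Z(s_2)$ rests only on $X$ being a curve together with $\deg L < 0$, so that the hypothesis $g_X \geq 2$ (inherited from the surrounding discussion and Conjecture \ref{conj:general}) is not actually used in this argument.
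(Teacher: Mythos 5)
Your proof is correct, and the equivalence you prove is the right one; the only difference from the paper lies in how the converse is organized. The paper works in the single vector space $V=H^0(X,L^{-1}\otimes J)\oplus H^0(X,L^{-1}\otimes J^{-1})$ and runs a dimension count on the incidence locus: for $P$ outside both base loci the subspace $V_P$ of pairs vanishing at $P$ has codimension two, for the finitely many $P$ in exactly one base locus it has codimension one, and sweeping $P$ over the curve shows that the pairs with a common zero form a proper (affine) subvariety of $V$, so a generic pair works. You instead make the choice sequentially: first pick $s_2$ avoiding the finite set $B_1$ (possible because disjointness of the base loci makes each condition $s(q)=0$, $q\in B_1$, a proper subspace of $V_2$, and a complex vector space is not a finite union of proper subspaces), then pick $s_1$ avoiding the finite set $Z(s_2)$. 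Your route is more elementary --- it needs only the finiteness of zero sets on a curve and the finite-union-of-subspaces fact, with no incidence-variety bookkeeping --- while the paper's version yields the slightly stronger conclusion that the embeddings form a Zariski-dense open subset of $V$, which is the form reused elsewhere (e.g.\ in Lemma \ref{lemma:ample}). Two cosmetic points: in your first selection step you should say explicitly that $s_2$ is chosen nonzero (this only matters in the degenerate case $B_1=\emptyset$, where the avoidance condition is vacuous), and your observation that $g_X\geq 2$ is never used is accurate --- the hypothesis is inherited from the surrounding discussion rather than needed for the lemma.
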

\begin{proof}
An embedding $L \hookrightarrow J\oplus J^{-1}$ gives two holomorphic sections of $L^{-1}\otimes J$ and $L^{-1}\otimes J^{-1}$, which do not vanish simultaneously. The existence of these two sections implies $|L^{-1}\otimes J|\not=\emptyset $, $|L^{-1}\otimes J^{-1}|\not=\emptyset$ and these two complete linear systems have non-intersecting base loci.

Suppose that the base loci of $|L^{-1}\otimes J|$ and $|L^{-1}\otimes J^{-1}|$ do not intersect.
If $P\in X$ does not lie on the union of these two base loci, then the linear subspace $V_P$ consisting of
pairs $(s_1,\, s_2)$, where $s_1\in H^0(X,\, L^{-1}\otimes J)$, $s_2\in H^0(X,\, L^{-1}\otimes J^{-1})$ and both of them
 vanish at $P$,  has codimension two
in $V=H^0(X,\, L^{-1}\otimes J)\oplus H^0(X,\, L^{-1}\otimes J^{-1})$.
Otherwise, there exist at most finitely many choices of $P$. Moreover, such $P$ does not lie on the two base loci simultaneously. Then $V_P$ has codimension one in $V$. By counting dimension, all the pairs $(s_1,\, s_2)\in V$ such that
$s_1$ and $s_2$ vanish at some point of $X$,  form
a proper affine subvariety of $V$. Hence, we could find such a pair $(s_1,\, s_2)\in V$ that
$s_1$ and $s_2$ does not vanish simultaneously, and $(s_1,\, s_2)$ gives an embedding
of $L$ into $J\oplus J^{-1}$.
\end{proof}

\begin{ques}
\label{ques:emb}
Let $X$ be a compact Riemann surface with genus $g_X\geq 2$ and $L$ a line bundle with $\deg\, L\leq -g_X/2$.
Does there exist $J\in {\rm Pic}^0(X)$ such that $|L^{-1}\otimes J|\not=\emptyset $, $|L^{-1}\otimes J^{-1}|\not=\emptyset$ and the base locus of $|L^{-1}\otimes J|$ does not intersect that of  $|L^{-1}\otimes J^{-1}|$?
\end{ques}
We have the following four comments about this question.
\begin{enumerate}
\item {\it If $\deg\, L\leq 2-2g_X$, there exists $J\in {\rm Pic}^0(X)$ such that $|L^{-1}\otimes J|\not=\emptyset $, $|L^{-1}\otimes J^{-1}|\not=\emptyset$ and $|L^{-1}\otimes J|\not=\emptyset $ is base free.}
Indeed,  if $\deg\, L\leq -2g_X$, then both $|L^{-1}\otimes J|$ and $|L^{-1}\otimes J^{-1}|$ are base free for all
$J\in {\rm Pic}^0(X)$ by Riemann-Roch.  As long as $\deg\, L$ equals $2-2g_X$ or $1-2g_X$,
we could find $J\in {\rm Pic}^0(X)$ since $|K_X|$ is base free.

\item {\it If $-d:=\deg\, L\leq -g_X/2$, there exists $J\in {\rm Pic}^0(X)$ such that $|L^{-1}\otimes J|\not=\emptyset $ and $|L^{-1}\otimes J^{-1}|\not=\emptyset$.} In fact, the line bundles $J\in {\rm Pic}^0(X)$ such that $|L^{-1}\otimes J|\not=\emptyset$ form a subvariety of ${\rm Pic}^0(X)$, denoted by $V_d$,  which could be identified with the image of the map
$\mu:X^{(d)}\to {\mathcal J}(X)$ (\cite[p.349]{GH:1994}).
Then, the line bundles $J\in {\rm Pic}^0(X)$ such that $|L^{-1}\otimes J^{-1}|\not=\emptyset$ form subvariety
$-V_d$ of ${\rm Pic}^0(X)$. It suffices to show that $V_d\cap (-V_d)\not=\emptyset$. By Poincar\'{e} formula (\cite[p.350]{GH:1994}), $V_d$ is homologous to
$\frac{1}{(g_X-d)!}\Theta^{g-d}$, where $\Theta$ is the Riemann theta-divisor of ${\mathcal J}(X)$. Hence,
$V_d$ has dimension $d$. Since $2d\geq g_X$, $V_d\cap (-V_d)\not=\emptyset$.

\item {\it For generic line bundles $L$ with $0>-d:=\deg\, L>-g_X/2$, there does not exist $J\in {\rm Pic}^0(X)$ such that $|L^{-1}\otimes J|\not=\emptyset $ and $|L^{-1}\otimes J^{-1}|\not=\emptyset$.} Suppose that there exists such $J$.
Then $|L^{-2}|$ consists of effective divisors of degree $2d$, which is contained in a subvariety with dimension $2d<g_X$ in ${\rm Pic}^{2d}(X)$  by Poincar\'{e} formula (\cite[p.350]{GH:1994}). This is also why we impose the condition $\deg L \leq -\frac{1}{2}g_{X}$ on the line bundles in our conjecture \ref{conj:general}.

\item \textit{There exists a line bundle $L$ of degree $-1$ on $X$ of genus $2$ (i.e., $\deg L = -\frac{1}{2}g_{X}$) such that $L$ cannot be a line subbundle of $J \oplus J^{-1}$}. It is known that $\left| K_X \right|$ defines a projective line $E$ in $\operatorname{Sym}^2(X)$, and the map $j \colon \operatorname{Sym}^2(X) \rightarrow \operatorname{Jac}(X)$ collapses $E$ to $K_X \in \operatorname{Jac}(X)$ but is otherwise bijective. Consider a point $p \in X$ such that $K_X \neq \mathcal{O}_X(2 p)$, and take $L=\mathcal{O}_X(-p)$. If $L$ were a line subbundle of $J \oplus J^{-1}$, then, according to Lemma \ref{lem:baseloci}, there would exist distinct points $q_1$ and $q_2$ in $X$ such that
\[ L^{-1} \otimes J = \mathcal{O}_X(q_1), \quad L^{-1} \otimes J^{-1}=\mathcal{O}_X (q_2). \]
This contradicts the properties of the map $j$. Based on this example, we stipulate in the conjecture \ref{conj:general} that the line bundle $L$ should be generic.

\end{enumerate}

\subsection{Further discussions on the restricted ramification divisor map}
In Proposition \ref{prop:restricted-rdm}, we introduce the concept of the restricted ramification divisor map and provide a calculation for its complex derivative.  In Lemma \ref{lemma:lower bound}, we establish a lower bound for the rank of this complex derivative. Using this result, we derive Theorem \ref{thm:gen} as a corollary, which yields a Borel subset in a complete linear system. We demonstrate that divisors in this Borel set can be represented by reducible metrics and determine the dimension of this set using the aforementioned lower bound.

Hereafter, we present a conjecture regarding the rank of the complex derivative of the restricted ramification divisor map. If this conjecture holds true, it would allow us to identify a subset with a higher Hausdorff dimension. This subset would consist of divisors that can be represented by reducible metrics.

In light of the conditions specified in Lemma \ref{lemma:lower bound}, we put forth the following.
\begin{conj}
\label{conj:rk of tangent map}
Assume that $0\neq i_{1}\in H^0\big({\rm Hom}(L,\,J)\big)$ and $0\neq i_{2}\in H^0\big({\rm Hom}(L,\,J^{-1})\big)$, then the rank of the complex derivative of the restricted ramification divisor map $\rho$ at $(i_{1},i_{2})$ is as follows:
\begin{itemize}
\item If $J^{2}\neq{\mathcal O}_X$, then $\operatorname{rank} \rho_{*(i_{1},i_{2})}= 2d-1$. In particular, $\operatorname{Ker}\rho_{*(i_{1},i_{2})}= \mathbb{C}(i_{1},-i_{2})$.

\item If $J^{2}={\mathcal O}_X$ and $i_{1}\neq \lambda i_{2}$ for any $\lambda\in\mathbb{C}$, then $\operatorname{rank} \rho_{*(i_{1},i_{2})}= 2d-3$. In particular, $\operatorname{Ker}\rho_{*(i_{1},i_{2})}= \mathbb{C}(i_{1},-i_{2})\oplus \mathbb{C}(i_{2},0)\oplus \mathbb{C}(0,i_{1})$.

\end{itemize}
\end{conj}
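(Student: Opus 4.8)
The plan is to show that the kernel of $\rho_{*(i_{1},i_{2})}$ is no larger than the subspace already exhibited in Lemma \ref{lemma:lower bound} (which gives the containment $\supseteq$ and the bounds), and the cleanest route is to factor the derivative through a Wronskian/Gaussian map and reduce the conjecture to an injectivity statement along the pencil $\langle i_1,i_2\rangle$. Write $A:=\operatorname{Hom}(L,J)$ and $B:=\operatorname{Hom}(L,J^{-1})$, so $i_1\in H^0(A)$, $i_2\in H^0(B)$, $A\otimes B=L^{-2}$, and the target of $\rho$ is $H^0(A\otimes B\otimes K_X)$. Because $\deg J=0$, the Hermitian--Einstein connection on $J\oplus J^{-1}$ is flat and the transition functions $a_{\alpha\beta}$ are constant; the gluing computation already carried out in Lemma \ref{lemma:lower bound} then shows that $u\otimes t\mapsto W(u,t)$, given locally by $(u_\alpha' t_\alpha-u_\alpha t_\alpha')\,{\rm d}z_\alpha$, is a well-defined linear \emph{Wronskian map} $W\colon H^0(A)\otimes H^0(B)\to H^0(A\otimes B\otimes K_X)$ on the full tensor product. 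By the local formula \eqref{equation:tangent map} and the antisymmetry of $W$, the complex derivative factors as $\rho_{*(i_1,i_2)}=W\circ\Psi$, where $\Psi(i_0,\widehat{i_0})=i_0\otimes i_2+i_1\otimes\widehat{i_0}$. When $J^2=\mathcal{O}_X$ one has $A=B$, $W$ is antisymmetric and descends to the Gaussian map $\gamma\colon\wedge^2 H^0(A)\to H^0(A^2\otimes K_X)$, so that $\rho_*=\gamma\circ\bar\Psi$ with $\bar\Psi(i_0,\widehat{i_0})=i_0\wedge i_2+i_1\wedge\widehat{i_0}$.

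Next I would compute $\operatorname{Ker}\Psi$ directly in a basis of $H^0(A)$ (resp.\ $H^0(B)$) extending $\{i_1,i_2\}$, which are independent by hypothesis. This gives $\operatorname{Ker}\Psi=\mathbb{C}(i_1,-i_2)$ when $J^2\neq\mathcal{O}_X$, and $\operatorname{Ker}\bar\Psi=\mathbb{C}(i_1,-i_2)\oplus\mathbb{C}(i_2,0)\oplus\mathbb{C}(0,i_1)$ when $J^2=\mathcal{O}_X$ (here $\operatorname{Im}\bar\Psi=\langle i_1,i_2\rangle\wedge H^0(A)$ has dimension $2d-3$) --- exactly the subspaces named in the conjecture. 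Since $\rho_*=W\circ\Psi$ is a composition of linear maps, $\dim\operatorname{Ker}\rho_*=\dim\operatorname{Ker}\Psi+\dim\big(\operatorname{Ker}W\cap\operatorname{Im}\Psi\big)$. Therefore the conjecture is \emph{equivalent} to the injectivity statement
\[
\operatorname{Ker}W\cap\operatorname{Im}\Psi=0\qquad\big(\text{resp. }\operatorname{Ker}\gamma\cap(\langle i_1,i_2\rangle\wedge H^0(A))=0\big),
\]
that is, the Wronskian/Gaussian map is injective on the subspace cut out by the pencil $(i_1,i_2)$. This equivalence is the structural core of the argument, and the remaining work is entirely about this injectivity.

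To attack the injectivity I would translate $W(i_0,i_2)-W(\widehat{i_0},i_1)=0$ into the scalar relation $a'=b'w-bw'$, equivalently $(a-bw)'=-2bw'$, where $a=i_0/i_2$, $b=\widehat{i_0}/i_2$, and $w=i_1/i_2$ is the developing section; here $a,b$ range over the $d$-dimensional spaces $i_2^{-1}H^0(A)$ and $i_2^{-1}H^0(B)$. The geometric meaning is that an element of $\operatorname{Ker}\rho_*$ is an infinitesimal deformation of the embedding $L\hookrightarrow J\oplus J^{-1}$ leaving the ramification section $\beta^*=W(i_1,i_2)$ fixed to first order; since $\beta^*$ is a Wronskian (a determinant) it varies by the trace of the deformation, so the trace-free infinitesimal automorphisms of $J\oplus J^{-1}$ produce precisely the listed kernel. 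Proving that \emph{these are the only} solutions requires ruling out further single-valued solutions of $(a-bw)'=-2bw'$, i.e.\ controlling the periods of the formal primitive $\int b\,{\rm d}w$; equivalently one must bound $\operatorname{Ker}\gamma$ against the pencil subspace. I expect this to be the main obstacle: the kernel of the Gaussian/Wahl map is genuinely nonzero in general, and the natural tool is cohomological --- realize $\operatorname{Ker}\gamma$ via the kernel bundle $M_A$ from $0\to M_A\to H^0(A)\otimes\mathcal{O}_X\to A\to 0$ and reduce the vanishing to the triviality of an $H^0$ of a twist of $M_A$ restricted along the pencil. Such vanishings typically fail on special curves (e.g.\ hyperelliptic $X$ or special $A$), so I anticipate that a clean proof will hold only for generic $L$, which is presumably why the statement is recorded as a conjecture rather than proved outright.
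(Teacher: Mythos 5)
You should first note that the statement you were asked to prove is recorded in the paper as Conjecture~\ref{conj:rk of tangent map} in the discussion section: the authors do \emph{not} prove it, and the only thing they establish is the weaker two-sided bound of Lemma~\ref{lemma:lower bound} ($d\le\operatorname{rank}\rho_{*(i_1,i_2)}\le 2d-1$, resp.\ $d-1\le\operatorname{rank}\le 2d-3$), obtained by exhibiting the obvious kernel vectors and by checking injectivity of $\rho_{*(i_1,i_2)}$ on the single slice $\widehat{i_0}=0$ via the relation $u_\alpha=C_\alpha s_{2,\alpha}$. So there is no proof in the paper to compare yours against, and your proposal does not supply one either: it is an honest reduction, not a proof.

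That said, the reduction itself is correct and is a genuine sharpening of the paper's viewpoint. The factorization $\rho_{*(i_1,i_2)}=W\circ\Psi$ through the Wronskian map $W\colon H^0(A)\otimes H^0(B)\to H^0(A\otimes B\otimes K_X)$ is consistent with the local formula \eqref{equation:tangent map} and with the gluing computation in Lemma~\ref{lemma:lower bound}; your computations of $\operatorname{Ker}\Psi=\mathbb{C}(i_1,-i_2)$ and, in the $2$-torsion case, $\operatorname{Ker}\bar\Psi=\mathbb{C}(i_1,-i_2)\oplus\mathbb{C}(i_2,0)\oplus\mathbb{C}(0,i_1)$ with $\operatorname{Im}\bar\Psi=\langle i_1,i_2\rangle\wedge H^0(A)$ are right, and they recover the paper's upper bounds while identifying the conjecture as exactly the statement $\operatorname{Ker}W\cap\operatorname{Im}\Psi=0$. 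The paper's own lower-bound argument is the special case of this injectivity on the line $H^0(A)\otimes i_2$ (where $W(i_0\otimes i_2)=0$ forces $i_0/i_2$ to be a flat section of $J^2$), so your framework strictly contains what the authors prove. The gap is the one you name yourself: injectivity of the Wronskian/Gaussian map on the full $(2d-1)$-dimensional (resp.\ $(2d-3)$-dimensional) pencil subspace, equivalently ruling out single-valued solutions of $(a-bw)'=-2bw'$ beyond the trace-free infinitesimal automorphisms. Nothing in your argument, and nothing in the paper, establishes this; your suggestion to attack it via the kernel bundle $M_A$ and your caveat that it may hold only for generic $L$ are reasonable, but as written the conjecture remains open after your proposal.
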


\noindent{\bf Acknowledgements} 
Y.F. is supported in part by the National Key R\&D Program of China No. 2022YFA1006600, Fundamental Research Funds for the Central Universities and Nankai Zhide Foundation.
J.S. is partially supported by the National Natural Science Foundation of China (Grant Nos. 12001399, 11831013 and 12171352) and the International Postdoctoral Exchange Fellowship Program by the Office of China Postdoctoral Council (NO. PC2021053). 
B.X. is supported in part by the Project of Stable Support for Youth Team in Basic Research Field, CAS (Grant No. YSBR-001) and NSFC (Grant Nos. 12271495, 11971450 and 12071449).


\begin{thebibliography}{10}

\bibitem{Atiyah:1957elliptic}
{\sc M.~F. Atiyah}, {\em Vector bundles over an elliptic curve}, Proceedings of the London Mathematical Society, s3-7 (1957), pp.~414--452.

\bibitem{BdeMM:2011}
{\sc D.~Bartolucci, F.~De~Marchis, and A.~Malchiodi}, {\em Supercritical conformal metrics on surfaces with conical singularities}, International Mathematics Research Notices, 2011 (2011), pp.~5625--5643.

\bibitem{Brouwer:1912}
{\sc L.~E.~J. Brouwer}, {\em Zur {I}nvarianz des {$n$}-dimensionalen {G}ebiets}, Math. Ann., 72 (1912), pp.~55--56.

\bibitem{Chai-L-W:2015}
{\sc C.-L. Chai, C.-S. Lin, and C.-L. Wang}, {\em Mean field equations, hyperelliptic curves and modular forms: {I}}, Cambridge Journal of Mathematics, 3 (2015), pp.~127--274.

\bibitem{Chen-L:2015}
{\sc C.-C. Chen and C.-S. Lin}, {\em Mean field equation of {Liouville} type with singular data: topological degree}, Communications on Pure and Applied Mathematics, 68 (2015), pp.~887--947.

\bibitem{Chen-L-W:2004}
{\sc C.-C. Chen, C.-S. Lin, and G.~Wang}, {\em Concentration phenomena of two-vortex solutions in a {Chern-Simons} model}, Annali della Scuola Normale Superiore di Pisa - Classe di Scienze, S\'erie 5, Tome 3 (2004), pp.~367--397.

\bibitem{CWWX:2015}
{\sc Q.~Chen, W.~Wang, Y.~Wu, and B.~Xu}, {\em Conformal metrics with constant curvature one and finitely many conical singularities on compact {R}iemann surfaces}, Pacific J. Math., 273 (2015), pp.~75--100.

\bibitem{Chen-K-L:2017}
{\sc Z.~Chen, T.-J. Kuo, and C.-S. Lin}, {\em Existence and non-existence of solutions of the mean field equations on flat tori}, Proceedings of the American Mathematical Society, 145 (2017), pp.~3989--3996.

\bibitem{dBP:2022}
{\sc M.~de~Borbon and D.~Panov}, {\em Parabolic bundles and spherical metrics}, Proc. Amer. Math. Soc., 150 (2022), pp.~5459--5472.

\bibitem{Demailly:2012}
{\sc J.~P. Demailly}, {\em Complex analytic and differential geometry}, http://www-fourier.ujf-grenoble.fr/~demailly/books.html, 2012.

\bibitem{Donaldson:2011}
{\sc S.~K. Donaldson}, {\em Riemann surfaces}, vol.~22 of Oxford Graduate Texts in Mathematics, Oxford University Press, Oxford, 2011.

\bibitem{Eremenko:2004}
{\sc A.~Eremenko}, {\em Metrics of positive curvature with conic singularities on the sphere}, Proceedings of the American Mathematical Society, 132 (2004), pp.~3349--3355.

\bibitem{Eremenko:2020coaxial}
\leavevmode\vrule height 2pt depth -1.6pt width 23pt, {\em Co-axial monodromy}, Ann. Sc. Norm. Super. Pisa Cl. Sci. (5), 20 (2020), pp.~619--634.

\bibitem{Eremenko:2020four}
\leavevmode\vrule height 2pt depth -1.6pt width 23pt, {\em Metrics of constant positive curvature with four conic singularities on the sphere}, Proceedings of the American Mathematical Society, 148 (2020), pp.~3957--3965.

\bibitem{Eremenko-G:2015}
{\sc A.~Eremenko and A.~Gabrielov}, {\em On metrics of curvature $1$ with four conic singularities on tori and on the sphere}, Illinois Journal of Mathematics, 59 (2015), pp.~925--947.

\bibitem{Eremenko-G-T:2014}
{\sc A.~Eremenko, A.~Gabrielov, and V.~Tarasov}, {\em Metrics with conic singularities and spherical polygons}, Illinois Journal of Mathematics, 58 (2014), pp.~739--755.

\bibitem{Eremenko-G-T:2016four}
\leavevmode\vrule height 2pt depth -1.6pt width 23pt, {\em Metrics with four conic singularities and spherical quadrilaterals}, Conformal Geometry and Dynamics. An Electronic Journal of the American Mathematical Society, 20 (2016), pp.~128--175.

\bibitem{Eremenko-G-T:2016three}
\leavevmode\vrule height 2pt depth -1.6pt width 23pt, {\em Spherical quadrilaterals with three non-integer angles}, Journal of Mathematical Physics, Analysis, Geometry, 12 (2016), pp.~134--167.

\bibitem{GH:1994}
{\sc P.~Griffiths and J.~Harris}, {\em Principles of algebraic geometry}, Wiley Classics Library, John Wiley \& Sons, Inc., New York, 1994.
\newblock Reprint of the 1978 original.

\bibitem{Kod:2005}
{\sc K.~Kodaira}, {\em Complex manifolds and deformation of complex structures}, Classics in Mathematics, Springer-Verlag, Berlin, english~ed., 2005.
\newblock Translated from the 1981 Japanese original by Kazuo Akao.

\bibitem{LN:1983}
{\sc H.~Lange and M.~S. Narasimhan}, {\em Maximal subbundles of rank two vector bundles on curves}, Mathematische Annalen, 266 (1983), pp.~55--72.

\bibitem{LSX:2021}
{\sc L.~Li, J.~Song, and B.~Xu}, {\em Irreducible cone spherical metrics and stable extensions of two line bundles}, Adv. Math., 388 (2021), pp.~Paper No. 107854, 36.

\bibitem{Lin-W:2010}
{\sc C.-S. Lin and C.-L. Wang}, {\em Elliptic functions, {G}reen functions and the mean field equations on tori}, Annals of Mathematics. Second Series, 172 (2010), pp.~911--954.

\bibitem{Lin-W:2017}
\leavevmode\vrule height 2pt depth -1.6pt width 23pt, {\em Mean field equations, hyperelliptic curves and modular forms: {II}}, Journal de l{\textquoteright}\'Ecole polytechnique {\textemdash} Math\'ematiques, 4 (2017), pp.~557--593.

\bibitem{LT:1992}
{\sc F.~Luo and G.~Tian}, {\em Liouville equation and spherical convex polytopes}, Proceedings of the American Mathematical Society, 116 (1992), pp.~1119--1129.

\bibitem{MZ:2020}
{\sc R.~Mazzeo and X.~Zhu}, {\em Conical metrics on {Riemann} surfaces {I}: the compactified configuration space and regularity}, Geometry \& Topology, 24 (2020), pp.~309--372.

\bibitem{MZ:2022}
{\sc R.~Mazzeo and X.~Zhu}, {\em Conical metrics on {R}iemann surfaces, {II}: {S}pherical metrics}, Int. Math. Res. Not. IMRN,  (2022), pp.~9044--9113.

\bibitem{MP:2015}
{\sc G.~Mondello and D.~Panov}, {\em Spherical metrics with conical singularities on a 2-sphere: angle constraints}, International Mathematics Research Notices, 2016 (2015), pp.~4937--4995.

\bibitem{MP:2019}
\leavevmode\vrule height 2pt depth -1.6pt width 23pt, {\em Spherical surfaces with conical points: systole inequality and moduli spaces with many connected components}, Geometric and Functional Analysis, 29 (2019), pp.~1110--1193.

\bibitem{Troyanov:1989}
{\sc M.~Troyanov}, {\em Metrics of constant curvature on a sphere with two conical singularities}, vol.~1410 of Lecture Notes in Math., Springer Berlin Heidelberg, Berlin, Heidelberg, 1989, pp.~296--306.

\bibitem{Troyanov:1991}
\leavevmode\vrule height 2pt depth -1.6pt width 23pt, {\em Prescribing curvature on compact surfaces with conical singularities}, Transactions of the American Mathematical Society, 324 (1991), pp.~793--821.

\bibitem{Umehara-Y:2000}
{\sc M.~Umehara and K.~Yamada}, {\em Metrics of constant curvature 1 with three conical singularities on the $2$-sphere}, Illinois Journal of Mathematics, 44 (2000), pp.~72 -- 94.

\bibitem{XZ:2021}
{\sc B.~Xu and X.~Zhu}, {\em Spectral properties of reducible conical metrics}, Illinois Journal of Mathematics, 65 (2021), pp.~313 -- 337.

\bibitem{Zhu:2020}
{\sc X.~Zhu}, {\em Rigidity of a family of spherical conical metrics}, New York Journal of Mathematics, 26 (2020), pp.~272--284.

\end{thebibliography}
\end{document}